\DeclareMathAlphabet{\mathcal}{OMS}{cmsy}{m}{n}
\newcommand{\hide}[1]{}
\DeclareMathOperator{\charac}{char}
\DeclareMathOperator{\coh}{{{coh}}}
\DeclareMathOperator{\cok}{cok}
\DeclareMathOperator{\diag}{diag}
\DeclareMathOperator{\End}{End}
\DeclareMathOperator{\Ext}{Ext}
\DeclareMathOperator{\FM}{\mathsf{FM}}
\DeclareMathOperator{\GL}{GL}
\DeclareMathOperator{\Grass}{Grass}
\DeclareMathOperator{\Hom}{Hom}
\DeclareMathOperator{\id}{id}
\DeclareMathOperator{\im}{image}
\DeclareMathOperator{\Ind}{Ind}
\DeclareMathOperator{\rank}{rank}
\DeclareMathOperator{\Res}{Res}
\DeclareMathOperator{\Spec}{Spec}
\DeclareMathOperator{\Sym}{Sym}
\DeclareMathOperator{\Tr}{Tr}
\renewcommand{\phi}{\varphi}
\renewcommand{\bar}{\overline}
\renewcommand{\tilde}{\widetilde}
\renewcommand{\mod}{\operatorname{mod}}
\renewcommand{\leq}{\leqslant}
\renewcommand{\le}{\leqslant}
\renewcommand{\geq}{\geqslant}
\renewcommand{\ge}{\geqslant}
\renewcommand{\to}{\longrightarrow}
\newcommand\into{{\;\hookrightarrow\;}}
\newcommand{\xto}{\xrightarrow}
\newcommand\onto{\twoheadrightarrow}
\newcommand{\Wedge}{{\textstyle\bigwedge}}
\newcommand{\svee}{{\scriptscriptstyle \vee}}
\renewcommand{\hat}{\widehat}
\newcommand{\ihat}{{\hat\imath}}
\newcommand{\x}{{\underline x}}
\newcommand{\DD}{{\mathbb D}}
\newcommand{\LL}{{\mathbb L}}
\newcommand{\PP}{{\mathbb P}}
\newcommand{\GG}{{\mathbb G}}
\newcommand{\QQ}{{\mathbb Q}}
\newcommand{\SSS}{{\mathbb {S}}}
\newcommand{\VV}{{\mathbb V}}
\newcommand{\WW}{{\mathbb W}}
\newcommand{\YY}{{\mathbb Y}}
\newcommand{\calc}{{\mathcal C}}
\newcommand{\cald}{{\mathcal D}}
\newcommand{\cale}{{\mathcal E}}
\newcommand{\calf}{{\mathcal F}}
\newcommand{\calm}{{\mathcal M}}
\newcommand{\caln}{{\mathcal N}}
\newcommand{\calo}{{\mathcal O}}
\newcommand{\calq}{{\mathcal Q}}
\newcommand{\calr}{{\mathcal R}}
\newcommand{\cals}{{\mathcal S}}
\newcommand{\calt}{{\mathcal T}}
\newcommand{\calu}{{\mathcal U}}
\newcommand{\calv}{{\mathcal V}}
\newcommand{\caly}{{\mathcal Y}}
\newcommand{\calz}{{\mathcal Z}}
\newcommand{\calog}{{\calo_{\GG}}}
\newcommand{\caloz}{{\calo_\calz}}
\def\cEnd{{\mathcal E}\!\mathit{nd}}
\def\cHom{{\mathcal H}\!\mathit{om}}
\newcommand{\rHom}[1][{}]{{\mathbf R}^{#1}\!\Hom}
\newcommand{\R}{{\mathbf R}}
\newcommand{\bL}{{\mathbf {L}}}
\newcommand{\lotimes}{\mathbin{\overset{\bL}{\otimes}}}
\newcommand{\Ya}{{\ytableausetup{boxsize=0.5em}\ydiagram{1}\ytableausetup{boxsize=normal}}}
\newcommand{\Yab}{{\ytableausetup{boxsize=0.5em}\ydiagram{1,1}\ytableausetup{boxsize=normal}}}
\newcommand{\Yaa}{{\ytableausetup{boxsize=0.5em}\ydiagram{2}\ytableausetup{boxsize=normal}}}
\newcommand{\Yaab}{{\ytableausetup{boxsize=0.5em}\ydiagram{2,1}\ytableausetup{boxsize=normal}}}
\newcommand{\Yaabb}{{\ytableausetup{boxsize=0.5em}\ydiagram{2,2}\ytableausetup{boxsize=normal}}}
\theoremstyle{plain}
\newtheorem{theorem}{Theorem}
\newtheorem{prop}[theorem]{Proposition}
\newtheorem{proposition}[theorem]{Proposition}
\newtheorem{lemma}[theorem]{Lemma}
\newtheorem{cor}[theorem]{Corollary}
\newtheorem{corollary}[theorem]{Corollary}
\newtheorem*{theorem*}{Theorem}
\newtheorem*{prop*}{Proposition}
\theoremstyle{definition}
\newtheorem{definition}[theorem]{Definition}
\newtheorem{remark}[theorem]{Remark}
\newtheorem{example}[theorem]{Example}
\newtheorem{sit}[theorem]{}
\newenvironment{nsit}{\begin{sit}\textit}{\end{sit}}
\numberwithin{theorem}{section}
\numberwithin{equation}{theorem}
\numberwithin{figure}{section}
\begin{document}

\date{\today}

\title[Desingularization of Determinantal Varieties II]%
{Non-commutative Desingularization of Determinantal Varieties, II: Arbitrary minors}

\author[R.-O. Buchweitz]{Ragnar-Olaf Buchweitz}
\address{Dept.\ of Computer and Mathematical Sciences, University of
Toronto Scarborough, Toronto, Ont.\ M1C 1A4, Canada}
\email{\href{mailto:ragnar@utsc.utoronto.ca}{ragnar@utsc.utoronto.ca}}

\author[G.J. Leuschke]{Graham J. Leuschke}
\address{Dept.\ of Mathematics, Syracuse University,
Syracuse NY 13244, USA}
\email{\href{mailto:gjleusch@math.syr.edu}{gjleusch@math.syr.edu}}
\urladdr{\url{http://www.leuschke.org/}}

\author[M. Van den Bergh]{Michel Van den Bergh}
\address{Departement WNI, Universiteit Hasselt, 3590
  Diepenbeek, Belgium} 
\email{\href{mailto:michel.vandenbergh@uhasselt.be}{michel.vandenbergh@uhasselt.be}}

\thanks{The first author was partly supported by NSERC grant
  3-642-114-80. The second author was partly supported by NSF grant
  DMS~0902119.  The third author is director of research at the FWO\@.
  Part of this research was supported through the programme ``Research
  in Pairs'' by the Mathematisches Forschungsinstitut Oberwolfach in
  2012, and part was carried out at the Mathematical Sciences Research
  Institute in 2013 with the support of the National Science
  Foundation under Grant No.\ 0932078 000. We thank both institutes for
  their hospitality}

\subjclass[2010]{Primary: 
  {14A22}, 
  {13C14}, 
  {14M12}, 
  {16S38}; 
Secondary:
  {14E15}, 
  {14M15}, 
  {15A75}
}

\begin{abstract}
  In our paper ``Non-commutative desingularization of determinantal
  varieties I'' we constructed and studied non-comm\-u\-ta\-tive resolutions
  of determinantal varieties defined by maximal minors. At the end of
  the introduction we asserted that the results could be generalized
  to determinantal varieties defined by non-maximal minors, at least
  in characteristic zero.  In this paper we prove the \emph{existence}
  of non-commutative resolutions in the general case in a manner which
  is still characteristic free, and carry out the explicit description
  by generators and relations in characteristic zero.  As an
  application of our results we prove that there is a fully faithful
  embedding between the bounded derived categories of the two
  canonical (commutative) resolutions of a determinantal variety,
  confirming a well-known conjecture of Bondal and Orlov in this
  special case.
\end{abstract}
\maketitle

\setcounter{tocdepth}{1}
\noindent\makebox[\textwidth][c]{%
\begin{minipage}{.75\linewidth}
\footnotesize
\tableofcontents
\normalsize
\end{minipage}
}

\section{Introduction}

Let $K$ be a field and let $F$, $G$ be two $K$-vector spaces of ranks
$m$ and~$n$ respectively. We take unadorned tensor products over $K$
and denote by $(-)^{\svee}$ the $K$-dual. Put $H=\Hom_K(G,F)$, viewed
as the affine variety of $K$-rational points of $\Spec S$, where $S =
\Sym_K(H^{\svee})$ is isomorphic to a polynomial
ring in $m n$ indeterminates.  The \emph{generic $S$-linear map} $\phi
\colon G \otimes S \to F \otimes S$ corresponds to multiplication by
the generic $(m \times n)$-matrix comprising those indeterminates.

Fix a non-negative integer $l<\min(m,n)$, and let $\Spec R$ be the
locus in $\Spec S$ where $\Wedge^{l+1}\phi=0$.  Then $R$ is the
quotient of $S$ by the ideal of $(l+1)$-minors of the generic $(m \times
n)$-matrix.  It is a classical result that~$R$ is Cohen-Macaulay of
codimension $(n-l)(m-l)$, with singular locus defined by the
$l$-minors of the generic matrix; in particular $R$ is smooth in
codimension $2$.

In this paper we consider some natural $R$-modules.  For a partition
$\alpha = (\alpha_1, \ldots, \alpha_r)$ and a vector
space $V$, write 
\[
\Wedge^\alpha V = \Wedge^{\alpha_1} V \otimes \cdots \otimes
\Wedge^{\alpha_r} V\,.
\]
Let $\alpha'$ denote the conjugate partition 
of $\alpha$, and
$\Wedge^{\alpha'} \phi^\svee \colon \Wedge^{\alpha'} F^\svee\otimes S
\to \Wedge^{\alpha'}G^\svee \otimes S$ the natural map induced by
$\phi$.  Define
\begin{equation*}
  T_\alpha = \im \left(\Wedge^{\alpha'} F^\svee \otimes R
    \xto{\left(\Wedge^{\alpha'}\phi^\svee\right) \otimes R\ }
    \Wedge^{\alpha'}G^\svee 
    \otimes R\right)\,.
\end{equation*}
Let $B_{u,v}$ be the set
of all partitions with at most $u$ rows and at most $v$ columns and
set
\begin{equation*}
T = \bigoplus_{\alpha \in B_{l,m-l}} T_\alpha \qquad \text{and} \qquad
E = \End_R(T)\,.
\end{equation*}

Our first main result generalizes the case $l=m-1$~\cite[Theorem
A]{Buchweitz-Leuschke-VandenBergh:2010}, and shows that general
determinantal varieties admit a \emph{non-commutative
  desingularization} in the following sense. 
{\def\thetheorem{A}
\begin{theorem}
  \label{thm:desing}
  For $m \leq n$, the endomorphism ring $E= \End_R(T)$ is
  maximal Cohen-Macaulay as an $R$-module, and has moreover finite
  global dimension.
  In particular $T_\alpha$ is a maximal Cohen-Macaulay $R$-module for
  each $\alpha \in B_{l,m-l}$.
\end{theorem}
}

If $m=n$ then $R$ is Gorenstein; in this case $E$ is an example of a
\emph{non-commutative crepant resolution} as defined
in~\cite{VandenBergh:crepant}. 

The $R$-module $T_\alpha$ is in general far from indecomposable.
Assume for a moment that $K$ has characteristic zero and denote by
$L^\alpha V$ the irreducible $\GL(V)$-module with highest weight
$\alpha$ (a.k.a.\ Schur module~\cite{Weyman:book}). It then follows
from the Pieri rule that $\Wedge^{\alpha'}V = L^\alpha V \oplus W$,
where $W$ is a direct sum of certain $L^\beta V$ with $\beta < \alpha$
for the natural order on partitions. Hence if we put
\begin{equation*}
N_\alpha=\im\left(L^\alpha (F^\svee)\otimes
  R\xrightarrow{\ \left(L^\alpha (\phi^\svee)\right) \otimes R\ }
  L^{\alpha}(G^\svee)\otimes  R\right) 
\end{equation*}
then in characteristic zero $T_\alpha$ is a direct sum of $N_\beta$
for $\beta\le\alpha$ with $N_\alpha$ appearing with multiplicity one.
In particular we obtain that $N_\alpha$ is maximal Cohen-Macaulay.
This is false in small characteristic; see
Remark~\ref{rmk:Nalpha-Weyman} below where we make the connection with
the work of Weyman~\cite[\S6]{Weyman:book}.

If we set $N = \bigoplus_{\alpha \in B_{l,m-l}} N_\alpha$ and
$A= \End_R(N)$, then $A$ is Morita equivalent to
$E=\End_R(T)$. Clearly Theorem~\ref{thm:desing} remains valid in
characteristic zero if we replace $E$ by $A$.  Furthermore, we have
the following description by generators and relations of the
non-commutative desingularization $A$. 
Write $\alpha \nearrow \beta$ if $\beta$ is obtained by adding a box
to $\alpha$.

{\def\thetheorem{B}
\begin{theorem}[Theorem~\ref{thm:quiveriso}]
  \label{thm:quiverintro}
  Assume that $K$ has characteristic zero and $m-l>1$.  As a
  $K$-algebra, $A$ is isomorphic to the bound path algebra of the
  truncated Young quiver (Fig.~\ref{fig:Young}) having vertices
  $\alpha \in B_{l,m-l}$ and arrows $\alpha \to \beta$ indexed by
  bases for
  \[
  \begin{cases}
    F^\svee & \text{if
      $\alpha\nearrow \beta$, and}\\
    G & \text{if $\beta \nearrow
      \alpha$,}
  \end{cases}
  \]  
   with vector spaces of relations between two vertices $\alpha,
   \gamma \in B_{l,m-l}$ given by
  \begin{equation*}
  \begin{cases}
    \Sym_2 F^\svee & \text{if } \gamma \nearrow \nearrow \alpha, \text{ two boxes in a column}\\
    \Wedge^2 F^\svee &  \text{if }\gamma \nearrow \nearrow \alpha, \text{ two boxes in a row} \\
    \Sym_2 F^\svee \oplus \Wedge^2 F^\svee \cong F^\svee \otimes
      F^\svee &  \text{if }\gamma \nearrow \nearrow \alpha , \text{ two  disconnected boxes} \\
    F^\svee\otimes G &  \begin{minipage}{2.5in} if $\alpha \neq \gamma$  and $\alpha \nearrow
      \beta$,\ $\gamma \nearrow \beta$ \\ \phantom{asdf}for some $\beta$ with 
      $\beta_1\leq m-l$\end{minipage}\\
    (F^\svee\otimes G)^{\oplus(t(\alpha)-1)} &  \text{if }\alpha =\gamma\\
    \Sym_2 G &  \text{if }   \alpha \nearrow \nearrow \gamma, \text{ two
      boxes in a column} \\
    \Wedge^2 G &  \text{if }   \alpha \nearrow \nearrow \gamma, \text{ two
      boxes in a row}\\
    \Sym_2 G \oplus \Wedge^2 G \cong G \otimes G &  \text{if }   \alpha \nearrow \nearrow \gamma, \text{
      two disconnected boxes}
  \end{cases}
  \end{equation*}
  where $t(\alpha)$ is the number of ways to add a box to $\alpha$
  without making any row longer than $m-l$.  
\end{theorem}
}
   \begin{figure}[htbp]
     \includegraphics{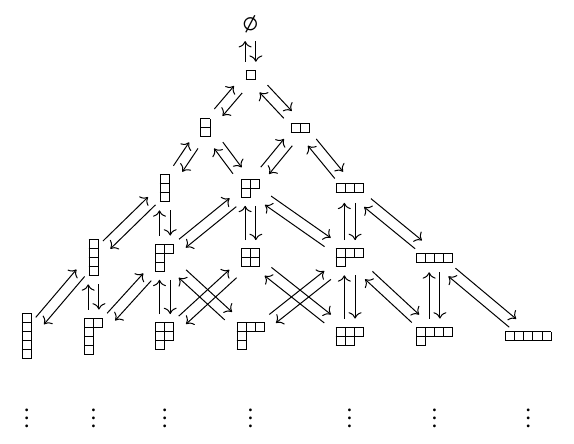}
     \caption{
     }
     \label{fig:Young}
   \end{figure}
Note that the representations defining the relations listed above,
for example $F^\svee\otimes F^\svee \subset (F^\svee \otimes
F^\svee)^{\oplus 2}$, are
\emph{not} induced by the obvious diagonal inclusions; there are some
non-trivial scalars appearing. A precise description of the relations
with explicit scalars is given in Theorem~\ref{thm:explicitrels}.

Now let $K$ be general again.  We have taken care to state
Theorems~\ref{thm:desing} and~\ref{thm:quiverintro} in algebraic
language but as in~\cite{Buchweitz-Leuschke-VandenBergh:2010} the
proofs proceed by invoking algebraic geometry, i.e.\ by constructing a
suitable tilting bundle on the Springer resolution of $\Spec R$.

Write $\GG = \Grass(l,F) \cong \Grass(l,m)$ for the Grassmannian
variety of $l$-dimensional subspaces of $F$, and let $\pi \colon \GG
\to \Spec K$ be the structure morphism to the base scheme $\Spec K$.  On
$\GG$ we have a tautological exact sequence of vector bundles
\begin{equation}
  \label{eq:taut}
  0 \to \calr \to \pi^* F^\svee \to \calq \to 0
\end{equation}
whose fiber above a point $(V \subset F) \in \GG$ is the short exact
sequence $0 \to (F/V)^\svee \to F^\svee \to V^\svee \to
0$. In~\cite{Buchweitz-Leuschke-VandenBergh:Grass} we proved that the
$\calog$-module
\[
\calt_0 = \bigoplus_{\alpha \in B_{l,m-l}} \Wedge^{\alpha'}\calq
\]
is a tilting bundle on $\GG$.  From this we derive our main geometric
result as follows.
 Set $\caly = \GG
\times_{\Spec K} H$, with the canonical projections $p\colon \caly \to
\GG$ and $q\colon \caly \to H$.  Define the \emph{incidence variety}
\[
\calz = \left\lbrace \,(V,\theta) \in \GG\times_{\Spec K}
H\;\middle\vert\;\im \theta\subset V\,\right\rbrace  \subseteq \caly
\]
and denote by $j$ the natural inclusion $\calz \to \caly$. The
composition $q' = q j\colon \calz \to H$ is then a birational
isomorphism from $\calz$ onto its image $q'(\calz) = \Spec R$, while
$p' =p j\colon \calz\to \GG$ is a vector bundle (with zero section
$\theta=0$). 
\begin{figure}[htbp]
\[
\begin{tikzcd}
\calz  \arrow[hook]{dr} {j} \arrow[bend left=15]{drr}{p'} \arrow{dd}[swap]{q'}\\
& \caly = H\times \GG \arrow{r}{p} \arrow{d}[swap]{q} & \GG=\Grass(l,F)\arrow{d}{\pi}\\
\Spec R \arrow[hook]{r} & H = \Hom_K(G,F) \arrow{r} & 
 \Spec K
\end{tikzcd}
\]
\caption{
}
\label{fig:Springer}
\end{figure}
 Figure~\ref{fig:Springer} summarizes the schemes and
maps we have defined.  We call $\calz$ the \emph{Springer resolution}
of $\Spec R$.

{\def\thetheorem{C}
\begin{theorem}
  \label{thm:tiltZ}
  The $\caloz$-module
  \[ 
  \calt ={p'}^{\ast}\left(\bigoplus_{\alpha\in B_{l,m-l}}
    \Wedge^{\alpha'}\calq\right)
  \]
  is a classical tilting bundle on $\calz$, i.e.\
  \begin{enumerate}[\quad(i)]
  \item $\calt$ classically generates the derived category
    $\cald^b(\coh \calz)$, in that the smallest thick subcategory of
    $\cald^b(\coh \calz)$ containing $\calt$ is $\cald^b(\coh \calz)$, and
  \item $\Hom_{\cald^b(\coh \calz)}(\calt,\ \calt[i]) = 0$ for $i \neq 0$.
  \end{enumerate}
  Furthermore we have
  \begin{enumerate}[\quad(i)]\setcounter{enumi}{2}
  \item $T_\alpha \cong \R q'_* \Wedge^{\alpha'} \calq$ for each
    $\alpha \in B_{l,m-l}$, so that $T\cong \R q'_\ast \calt$, and 
  \item $E\cong \End_{\calo_\calz}(\calt)$.
  \end{enumerate} 
\end{theorem}
}

The proofs of Theorems~\ref{thm:desing} and~\ref{thm:tiltZ} are
substantially simpler than the corresponding ones
in~\cite{Buchweitz-Leuschke-VandenBergh:2010}, even in the case
treated there of maximal minors.

\medskip

As $H=\Hom_K(G,F)$ is canonically isomorphic to
$\Hom_K(F^\svee,G^\svee)$ we obtain a second Springer resolution
$q'_2\colon \calz_2 \to \Spec R$ by replacing $(F,G)$ with
$(G^\svee,F^\svee)$. Put $\hat\calz=\calz\times_H\calz_2$.  As an
application of Theorem~\ref{thm:tiltZ}, we prove the following.  {%
  \def\thetheorem{D}
  \begin{theorem} \label{thm:BO} If $m\le n$ then the Fourier-Mukai
    transform with kernel $\calo_{\hat\calz}$ induces a fully faithful
    embedding $\FM\colon\cald^b(\coh \calz )\into \cald^b(\coh \calz_2
    )$.  If $m=n$ then $\FM$ is an equivalence.
\end{theorem}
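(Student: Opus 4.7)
Let $\pi_1\colon \hat\calz \to \calz$ and $\pi_2\colon \hat\calz \to \calz_2$ denote the projections, so the Fourier-Mukai transform of interest is $\Phi := \Phi_{\calo_{\hat\calz}} = \R\pi_{2,\ast}\pi_1^{\ast}(-)$. The plan is to reduce full faithfulness to an explicit Fourier-Mukai computation on the tilting generator $\calt$ from Theorem~C.

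Applying Theorem~C symmetrically to the dual pair $(G^{\svee}, F^{\svee})$ in place of $(F,G)$ produces a second tilting bundle
\[
\calt_2 = (p_2')^{\ast}\Bigl(\bigoplus_{\beta \in B_{l,n-l}}\Wedge^{\beta'}\calq_2\Bigr)
\]
on $\calz_2$, with $\End_{\calz_2}(\calt_2)^{\circ} \cong E_2 := \End_R(T_2)^{\circ}$. Since $m \le n$, $B_{l,m-l} \subseteq B_{l,n-l}$, and the direct summand
\[
\calt_{2,<} := (p_2')^{\ast}\Bigl(\bigoplus_{\alpha \in B_{l,m-l}}\Wedge^{\alpha'}\calq_2\Bigr)
\]
of $\calt_2$ corresponds to an idempotent $e \in E_2$. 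A characteristic-free $R$-duality argument, applying $\Hom_R(-,R)$ to the defining sequences for $T_\alpha$ and using that Theorem~A forces $T_\alpha$ to be maximal Cohen-Macaulay and hence reflexive, identifies $T_{2,\alpha} \cong T_\alpha^{\svee}$ for each $\alpha \in B_{l,m-l}$; consequently the corner $eE_2 e$ is isomorphic (up to opposite) to $\End_\calz(\calt) = \End_R(T)$.

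The heart of the proof is to show $\Phi(\calt) \cong \calt_{2,<}$ in $\cald^b(\coh \calz_2)$. For this, realize $\hat\calz$ as a closed subscheme of $\GG \times_{\Spec K} \calz_2$: the condition $\im\theta \subseteq V_1$ cuts $\hat\calz$ out as the zero locus of the section of $\text{pr}_{\GG}^{\ast}\calr^{\svee} \otimes \text{pr}_{\calz_2}^{\ast}(p_2')^{\ast}\calq_2^{\svee}$ obtained by composing the tautological $(p_2')^{\ast}\calq_2 \to F \otimes \calo_{\calz_2}$ with the quotient $F\otimes\calo \to \text{pr}_{\GG}^{\ast}\calr^{\svee}$. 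The rank $(m-l)l$ of this bundle equals $\codim(\hat\calz, \GG\times\calz_2)$, and $\hat\calz$---a vector bundle over the smooth $\GG\times\GG_2$---is itself smooth, so the section is regular and its Koszul complex resolves $\calo_{\hat\calz}$. Combining this resolution with the projection formula along $\text{pr}_{\calz_2}\colon \GG\times\calz_2 \to \calz_2$, and the Cauchy filtration of $\Wedge^{\bullet}(\calr\boxtimes\calq_2)$ with graded pieces $L_\mu(\calr)\boxtimes L_{\mu'}(\calq_2)$ (available characteristic-freely as a filtration), reduces the computation of $\R\pi_{2,\ast}\pi_1^{\ast}((p')^{\ast}\Wedge^{\alpha'}\calq)$ to sheaf cohomology on $\GG$ of bundles of the form $\Wedge^{\alpha'}\calq \otimes L_\mu(\calr)$. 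Borel-Weil-Bott vanishing on $\GG$---exactly the form already invoked in the proof of Theorem~B---kills all contributions except one, yielding $(p_2')^{\ast}\Wedge^{\alpha'}\calq_2$ for each $\alpha \in B_{l,m-l}$ and thus $\calt_{2,<}$ overall.

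With these identifications in hand, full faithfulness is formal. By tilting, $\R\Hom_{\calz_2}(\Phi(\calt),\Phi(\calt)) = \R\Hom_{\calz_2}(\calt_{2,<},\calt_{2,<})$ is concentrated in degree zero and equals the corner $eE_2 e \cong \End_\calz(\calt) = \R\Hom_\calz(\calt,\calt)$. Because $\calt$ classically generates $\cald^b(\coh\calz)$, this natural algebra isomorphism propagates to the whole derived category and forces $\Phi$ to be fully faithful. The main obstacle is the geometric calculation of $\Phi(\calt)$: the Koszul--Bott cancellations require careful bookkeeping of Schur functors, but employ only techniques already developed in the proofs of Theorems~B and~C.
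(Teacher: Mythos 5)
Your overall architecture (corner idempotent $e$ in $E_2$, identification of $T_{2,\alpha}$ with $T_\alpha^{\svee}$, compute the Fourier--Mukai transform on the tilting generator, and conclude by d\'evissage) matches the paper's strategy, but the central computation contains a dualization error that makes the key claim false.

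The claim $\Phi(\calt)\cong\calt_{2,<}$ is wrong; the correct statement (and what the paper proves) is $\Phi(\calt^{\svee})\cong\calt_2'$. The reason is geometric: over a rank-$l$ tensor $\theta$, the fibre of $(p_1')^{\ast}\calq_1$ is $(\im\theta)^{\svee}$, while the fibre of $(p_2')^{\ast}\calq_2$ is $\im\theta$ itself. Hence the canonical identification on $\hat\calz_0$ is $(p_1'')^{\ast}\calq_1^{\svee}\cong(p_2'')^{\ast}\calq_2$ (the map $u$ of Proposition~\ref{prop:TTprimeEEprime}), \emph{not} $(p_1'')^{\ast}\calq_1\cong(p_2'')^{\ast}\calq_2$. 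You in fact recognize this duality at the module level---your own assertion $T_{2,\alpha}\cong T_\alpha^{\svee}$---but then drop it when passing to the sheaf-level Fourier--Mukai computation. The discrepancy is visible already for $l=1$, $m=n=2$ (the conifold): $r_1^{\ast}\calt_{(1),1}=p_1''^{\ast}\calo_{\GG_1}(1)$, and on $\hat\calz_0$ this agrees with $p_2''^{\ast}\calo_{\GG_2}(-1)$, so $\R r_{2*}$ of it cannot be $\calt_{(1),2}=p_2'^{\ast}\calo_{\GG_2}(1)$; indeed your own Koszul complex in this case gives $[(p_2')^{\ast}\calo(-1)\to\calo_{\calz_2}^{2}]$, whose cohomology is visibly not a line bundle. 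Replacing $\calt$ by $\calt^{\svee}$ throughout (and using that $\End(\calt^{\svee})^{\circ}\cong\End(\calt)^{\circ}$, cf.\ Remark~\ref{rmk:E1E1prime}) repairs this.

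A second issue is that your Koszul reduction does not land in the form treated by Proposition~\ref{prop:work}: the Cauchy filtration of $\Wedge^{\bullet}(\calr\boxtimes\calq_2)$ produces terms $L_\mu\calr\boxtimes L_{\mu'}\calq_2$, so you would need to control $H^{\bullet}\bigl(\GG,L_\mu\calr\otimes\Wedge^{\alpha'}\calq^{\svee}\bigr)$, a mixed $\calr$--$\calq$ Bott computation which is genuinely different from the pure $\calq$-statement $H^{i}\bigl(\GG,(\Wedge^{\alpha'}\calq)^{\svee}\otimes L_\delta\calq\bigr)=0$ established in the paper. The paper sidesteps this entirely: using $\hat\calz=\underline{\Spec}\,\Sym_{\GG_1\times\GG_2}(\calq_1\boxtimes\calq_2)$ and pushing forward first to $\GG_1\times\GG_2$ and then fibrewise over $\GG_2$, one needs only $H^{i}\bigl(\GG_1,\Wedge^{\alpha'}\calq_1^{\svee}\otimes\Sym_{\GG_1}(\calq_1\otimes(\calq_2)_y)\bigr)=0$, which by Cauchy reduces exactly to Proposition~\ref{prop:work}, and the map $\sigma$ is then seen to be an isomorphism by a torsion-free/reflexive comparison over $\hat\calz_0$ rather than by Koszul bookkeeping. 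Finally, your last paragraph is too loose: full faithfulness requires that the algebra isomorphism $\End(\calt^{\svee})\cong\End(\Phi\calt^{\svee})$ be the one \emph{induced by} $\Phi$, not merely an abstract one; the paper handles this by constructing the natural transformation $\eta\colon\calf\to\FM$ and checking it on the generator.
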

}
A general conjecture by Bondal and Orlov~\cite{Bondal-Orlov:2002}
asserts that a flip between algebraic varieties induces a fully
faithful embedding between their derived categories.  It is not hard
to see that the birational map $\calz_2 \dashrightarrow \calz$ is a flip, so we
obtain a confirmation of the Bondal-Orlov conjecture in this special
case.

The first half of the paper is characteristic-free.  We include a
short section recalling the results we need
from~\cite{Buchweitz-Leuschke-VandenBergh:Grass}, as well as some
background on characteristic-free versions of the Cauchy formula and
Littlewood-Richardson rule.  These are used to prove
Theorem~\ref{thm:tiltZ}, and as a consequence
Theorem~\ref{thm:desing}, in the third section.  The fourth section
contains the proof of Theorem~\ref{thm:BO}.

In the second half, we specialize to characteristic zero.
Section~\ref{sect:simples} contains the calculation of the $\Ext$
groups between the simple $A$-modules, which will be used in
Section~\ref{sect:Young-Pieri} to construct an isomorphism between $A$
and the path algebra with relations of the truncated Young quiver
$\YY_{l,m-l}$ in Theorem~\ref{thm:quiverintro}.  The relations on the
path algebra of $\YY_{l,m-l}$ are induced by relations between certain
maps occurring in Pieri's formula.  Such maps were first considered by
Olver~\cite{Olver}; we give an independent analysis in
Section~\ref{sect:pierisystems} and show how to compute the relevant
relations and thereby the scalars appearing in
Theorem~\ref{thm:quiverintro}.  The first non-trivial example $(m,n,l)
= (4,4,2)$ is worked out in Section~\ref{sect:G24}.

We include an Appendix giving an alternative description of the
non-commutative desingularization as a ``quiverized Clifford algebra''
as in our earlier paper~\cite{Buchweitz-Leuschke-VandenBergh:2010}.

\medskip 

Since the original version of this article was posted on the arXiv,
similar results have been obtained by other
authors~\cite{Weyman-Zhao, Donovan:2011, Donovan-Segal:2012,
  Ballard-Favero-Katzarkov:2013}.

\medskip

We are grateful to Vincent Franjou, Catharina Stroppel, and Antoine
Touz\'e for help with references, and to Jerzy Weyman, Steven V Sam,
and Gufang Zhao for interesting conversations.

\section{Preliminaries}\label{sect:notation}

We recall two results from~\cite{Buchweitz-Leuschke-VandenBergh:Grass}.  Recall that we write
$L^\alpha$ for the Schur functors; our conventions are that $L^{(t)} V
= \Sym_t V$ and $L^{(1^t)}V = \Wedge^t V$.\footnote{This convention
  differs from that in~\cite{Weyman:book}.  Our indexing is such that
  $L^\alpha$ has highest weight $\alpha$.}  The functors $L^\alpha$
are defined for all \emph{dominant weights,} that is, weakly
decreasing sequences of integers.  A \emph{partition} is a dominant weight
with non-negative entries.

\begin{theorem}[{\cite[Theorem 1.2]{Buchweitz-Leuschke-VandenBergh:Grass}}]
  \label{thm:tiltG}
  The $\calog$-module
  \[
  \calt_0 = \bigoplus_{\alpha \in B_{l,m-l}} \Wedge^{\alpha'} \calq
  \]
  is a classical tilting bundle on $\GG$, i.e.\
  \begin{enumerate}[\quad (i)]
  \item $\calt_0$ classically generates the derived category
    $\cald^b(\coh \GG)$, in that $\cald^b(\coh \GG)$ is the smallest
    thick subcategory of itself containing $\calt_0$, and
  \item $\Hom_{\cald^b(\coh \GG)}(\calt_0,\calt_0[i]) = 0$ for $i \neq
    0$.\qed
  \end{enumerate}
\end{theorem}

\begin{prop}[{\cite[Prop. 1.3]{Buchweitz-Leuschke-VandenBergh:Grass}}]
  \label{prop:work}
  Let $\alpha \in B_{l,m-l}$ and let $\delta$ be any partition.  Then
  for all $i >0$ one has
  \[
  H^i\left(\GG, \left(\Wedge^{\alpha'} \calq\right)^\svee
    \otimes_\calog L^\delta \calq\right) = 0\,.\qed
  \]
\end{prop}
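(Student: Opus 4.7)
The plan is to reduce the claim to Kempf vanishing (Theorem~\ref{thm:kv}) by combining the Littlewood--Richardson filtration of Theorem~\ref{thm:cauchyLR}(ii) with a rank-$l$ duality on $\calq$. Since $\alpha\in B_{l,m-l}$ forces each $\alpha'_i\le l=\rank\calq$, the identity $\Wedge^k\calq^\svee\cong\Wedge^{l-k}\calq\otimes(\det\calq)^{-1}$ applied termwise yields
\[
\left(\Wedge^{\alpha'}\calq\right)^\svee\cong\Wedge^{\beta'}\calq\otimes(\det\calq)^{-(m-l)},
\]
where $\beta\in B_{l,m-l}$ is the complementary partition with $\beta_k=(m-l)-\alpha_{l-k+1}$. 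Iterating Theorem~\ref{thm:cauchyLR}(ii), first on the $m-l$ exterior factors of $\Wedge^{\beta'}\calq$ and then against $L_\delta\calq$, produces a filtration of $\Wedge^{\beta'}\calq\otimes L_\delta\calq$ whose associated graded is a direct sum of Schur bundles $L_\gamma\calq$ arranged in decreasing lexicographic order, for partitions $\gamma$ with at most $l$ rows.

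It therefore suffices to prove $H^i(\GG,\,L_\gamma\calq\otimes(\det\calq)^{-(m-l)})=0$ for all $i>0$ and every such $\gamma$. Each of these bundles is the homogeneous bundle on $\GG=G/P$ associated to the irreducible representation $L_\gamma V\otimes(\det V)^{-(m-l)}$ of the Levi factor $GL_l\subset P$ (with $GL_{m-l}$ acting trivially). Its cohomology can be computed via the Grothendieck spectral sequence~\eqref{eq:Grothendieck} for $B\subset P\subset G$: once one knows that the corresponding $GL_m$-weight is $G$-dominant, then Kempf vanishing on both $P/B$ and $G/B$ makes the spectral sequence collapse to an $\R^i\ind_B^G$ that itself vanishes for $i>0$. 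The constraint $\beta\in B_{l,m-l}$ is precisely what controls the combinatorics of the Littlewood--Richardson coefficients so that this dominance can be verified on each graded piece.

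The main obstacle will be this $G$-dominance verification for the full list of $\gamma$'s produced by the filtration. When a given piece fails to yield a $G$-dominant weight directly, the strategy is to reorganize along the lex-decreasing ordering of Theorem~\ref{thm:cauchyLR}(ii) and, where necessary, apply Serre duality against $\omega_\GG\cong(\det\calq)^{-m}\otimes\pi^{\ast}(\det F)^l$ to convert an offending higher cohomology group into a lower cohomology group of a dual bundle that falls back inside the same framework. The combined bookkeeping, controlled at every stage by $\alpha\in B_{l,m-l}$ and by the fact that $L_\delta\calq$ itself satisfies Kempf vanishing on $\GG$, is what we expect to close the argument.
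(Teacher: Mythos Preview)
Your initial reduction---using the rank-$l$ duality on $\calq$ together with the Littlewood--Richardson filtration---matches the paper's, and both arguments arrive at the same subgoal: show $H^i\bigl(\GG,\,L_\gamma\calq\otimes(\det\calq)^{-(m-l)}\bigr)=0$ for $i>0$ and all partitions $\gamma$ with at most $l$ rows. The $G$-dominant case $\gamma_l\ge m-l$ is then Kempf vanishing via the spectral sequence~\eqref{eq:Grothendieck}, exactly as you indicate.

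The genuine gap is the non-dominant case $\gamma_l<m-l$. Your assertion that ``the constraint $\beta\in B_{l,m-l}$ is precisely what controls the combinatorics \dots\ so that this dominance can be verified on each graded piece'' is simply false: since $\delta$ is arbitrary, the resulting $\gamma$'s are arbitrary partitions with $\le l$ rows, and non-dominant weights certainly appear. Your fallback---``reorganize along the lex-decreasing ordering'' and ``apply Serre duality''---is not an argument. Serre duality trades $H^i$ for $H^{l(m-l)-i}$ of a dual bundle, which still needs its own vanishing; in positive characteristic $(L_\gamma\calq)^\svee$ is a Weyl bundle, not a Schur bundle, so you do not even land back in your framework, and nothing in the lex ordering of the filtration produces cancellations. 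The paper closes this case by a device you are missing entirely: it introduces an auxiliary parabolic $P^\circ\supset B$ with Levi block $\GL(m-l+1)$, uses $P^\circ/B\cong\GL(m-l+1)/B^\circ$ to reduce (via the spectral sequence for $B\subset P^\circ\subset G$) to the case $l=1$, i.e.\ to $\PP^{m-1}$, where the bundle becomes the line bundle $\calo_{\PP^{m-1}}(-m+1+\gamma_1)$ with $0\le\gamma_1<m-1$, which has vanishing cohomology in \emph{all} degrees. This all-degree vanishing on the smaller quotient is what forces $\R^i\ind_B^G\bar\delta=0$ for every $i$; your proposal has no substitute for it.
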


We also state for easy reference the following characteristic-free
versions of the Cauchy formula and the Littlewood-Richardson rule.
See~\cite[(2.3.2), (2.3.4)]{Weyman:book}. 

\begin{theorem}[Boffi~\cite{Boffi:1988},
  Doubilet-Rota-Stein~\cite{Doubilet-Rota-Stein:1974}]
  \label{thm:cauchyLR}
  Let $V$ and $W$ be $K$-vector spaces and let $\alpha$ and $\beta$ be
  dominant weights.
  \begin{enumerate}[\quad(i)]
  \item \label{item:cauchy} There is a natural filtration on $\Sym_t(V
    \otimes W)$ whose associated graded object is a direct sum with
    summands tensor products $L^\gamma V \otimes L^{\gamma'} W$ of Schur
    functors.
  \item There is a natural filtration on $\Wedge^t(V \otimes W)$ whose
    associated graded object is a direct sum with summands tensor
    products $L^\gamma V \otimes (L^{\gamma'} W^\svee)^\svee$ of Schur functors.
  \item \label{item:LR} There is a natural filtration on $L^\alpha V
    \otimes L^\beta V$ whose associated graded object is a direct sum
    of Schur functors $L^\gamma V$. The $\gamma$ that appear, and
    their multiplicities, can be computed using the usual
    Littlewood-Richardson rule.
  \end{enumerate}
  If $\charac K=0$ then the filtrations above degenerate to direct
  sums.  Note that in characteristic zero $(L^{\gamma'} W^\svee)^\svee
  \cong L^{\gamma'}W$.
\end{theorem}

\section{A tilting bundle on the resolution}\label{sect:proofsAC}

To prove Theorem~\ref{thm:tiltZ}, keep all the notation introduced
there.  One easily verifies that
\[
\calz=\underline{\Spec} \left(\Sym_{\GG}(G\otimes \calq)\right)\,;
\]
indeed, a closed point of the right-hand side consists of a pair $(V
\subset F, \theta)$, where $(V \subset F) \in \GG$ and $\theta$ is an
element of the fiber of $(G \otimes \calq)^\svee$ over the point $(V
\subset F)$.  That fiber is $(G \otimes V^\svee)^\svee = \Hom_K(G,V)
\subset \Hom_K(G,F)$, so the pair $(V, \theta)$ is precisely a point of
$\calz$. 

We have  $\calt_0 = \bigoplus_{\alpha\in B_{l,m-l}}
\Wedge^{\alpha'} \calq$, a tilting bundle on $\GG$ by
Theorem~\ref{thm:tiltG}.  Set $\calt = {p'}^{\ast} \calt_0$, a vector
bundle on $\calz$.

\begin{prop}
  \label{prop:tiltZ}
  The $\caloz$-module $\calt = {p'}^{\ast}\calt_0$ is a tilting
  bundle on $\calz$.
\end{prop}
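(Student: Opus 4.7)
My plan is to verify the two defining properties of a tilting bundle separately, exploiting the fact that $p'\colon \calz\to \GG$ is an affine morphism (specifically, a vector bundle).

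\smallskip

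\textbf{Ext-vanishing.} I would first reduce the computation of $\Ext^i_{\caloz}(\calt,\calt)$ to cohomology on $\GG$. Since $p'$ is affine, $\R p'_\ast=p'_\ast$ and the projection formula gives
\[
\R p'_\ast \cHom_{\caloz}(\calt,\calt)=\R p'_\ast {p'}^\ast(\calt_0^\svee\otimes\calt_0)=\calt_0^\svee\otimes\calt_0\otimes_\calog p'_\ast\caloz,
\]
and the description $\calz=\underline{\Spec}\,\Sym_\GG(G\otimes\calq)$ identifies $p'_\ast\caloz$ with $\Sym_\GG(G\otimes\calq)$. Hence
\[
\Ext^i_{\caloz}(\calt,\calt)=H^i\!\left(\GG,\ \calt_0^\svee\otimes\calt_0\otimes_\calog\Sym_\GG(G\otimes\calq)\right).
\]

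\smallskip

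\textbf{Reduction to Proposition~\ref{prop:work}.} The symmetric algebra $\Sym_\GG(G\otimes\calq)$ is filtered degree-wise, and by the characteristic-free Cauchy formula (Theorem~\ref{thm:cauchyLR}(\ref{item:cauchy})) each graded piece $\Sym_t(G\otimes\calq)$ has an associated graded object that is a direct sum of terms $L_\gamma G\otimes L_\delta\calq$. Since $L_\gamma G$ is just a finite-dimensional $K$-vector space (constant on $\GG$), it suffices to prove the vanishing of
\[
H^i\!\left(\GG,\ (\Wedge^{\alpha'}\calq)^\svee\otimes\Wedge^{\beta'}\calq\otimes L_\delta\calq\right)\qquad(i>0)
\]
for every $\alpha,\beta\in B_{l,m-l}$ and every partition $\delta$. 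Applying the Littlewood--Richardson filtration (Theorem~\ref{thm:cauchyLR}(\ref{item:LR})) to $\Wedge^{\beta'}\calq\otimes L_\delta\calq$ further reduces to the case where only a single Schur functor $L_\epsilon\calq$ appears, and this is precisely the statement of Proposition~\ref{prop:work}. The filtration arguments work because higher cohomology vanishing is preserved under extensions in short exact sequences.

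\smallskip

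\textbf{Classical generation.} For this I would use the standard fact that pullback along an affine morphism preserves classical generators. Concretely, if $\calf\in\cald^b(\coh\calz)$ satisfies $\R\Hom_{\caloz}({p'}^\ast\cale,\calf)=0$ for all $\cale$ in $\cald^b(\coh\GG)$, then by adjunction $\R\Hom_{\calog}(\cale,\R p'_\ast\calf)=0$ for all such $\cale$, so $\R p'_\ast\calf=0$; but $p'$ is affine, hence $\R p'_\ast$ is faithful on $\cald^b(\coh\calz)$, so $\calf=0$. Thus ${p'}^\ast$ sends a classical generator of $\cald^b(\coh\GG)$ to a classical generator of $\cald^b(\coh\calz)$, and by Theorem~\ref{thm:tiltG} it follows that $\calt={p'}^\ast\calt_0$ classically generates $\cald^b(\coh\calz)$.

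\smallskip

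The main content lies in the Ext-vanishing, but thanks to Proposition~\ref{prop:work} and the two filtration theorems all the work is already done; the only task is to assemble the pieces correctly. The classical generation step is essentially formal once one remarks that $p'$ is affine.
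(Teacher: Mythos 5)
Your proposal is correct and follows essentially the same route as the paper: reduce $\Ext$-vanishing via the affine morphism $p'$ to cohomology of $\calt_0^\svee\otimes\calt_0\otimes_\calog\Sym_\GG(G\otimes\calq)$ on $\GG$, filter by the Cauchy formula and the Littlewood--Richardson rule to land in the hypotheses of Proposition~\ref{prop:work}, and note that classical generation is inherited from $\calt_0$ because $p'$ is affine. The paper states the reduction to Proposition~\ref{prop:work} a bit more compactly, but the content and the key ingredients are identical.
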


\begin{proof}
  Since $\calt_0$ classically generates $\cald^b(\coh \GG)$ and $p'$
  is an affine morphism, it is easy to see that $\calt$ classically
  generates $\cald^b(\coh \calz)$, so it remains to prove
  $\Ext$-vanishing.  We have
  \[
  \Ext^i_{\caloz}(\calt,\calt)=H^i(\GG,\Sym_{\GG}(G\otimes
  \calq)\otimes_{\calog} \cEnd_{\calog} (\calt_0))
  \]
  and hence we need to prove that
  \begin{equation}
    \label{eq:tiltZ-van}
    \Sym_{\GG}(G\otimes \calq)\otimes_{\calog} \cHom_{\calog}
    (\Wedge^{\alpha'} \calq,\Wedge^{\beta'} \calq)
  \end{equation}
has vanishing higher cohomology for $\alpha,\beta\in B_{l,m-l}$.

Using Theorem~\ref{thm:cauchyLR} we find that \eqref{eq:tiltZ-van} has
a filtration whose associated graded object is a direct sum of vector
bundles of the form
\begin{equation}
  \label{eq:somewhatarbitrary}
  (\Wedge^{\alpha'}\calq)^\svee\otimes_{\calog} L^\delta\calq
\end{equation}
where $\alpha\in B_{l,m-l}$ and $\delta$ is some partition containing
$\beta$.  It now suffices to invoke Proposition~\ref{prop:work}.
\end{proof}

To prove the rest of Theorem~\ref{thm:tiltZ}, we shall show that
$\End_R(\R q'_* \calt) = \R q'_* \cEnd_{\caloz}(\calt)$,
and that the latter is MCM and has finite global dimension.  Put 
\[
\cale = \cEnd_{\caloz}(\calt)\,,
\]
and let $\omega_\calz$ be the dualizing sheaf of $\calz$.

\begin{lemma}
  \label{lem:calE-MCM}
  Assume $m \leq n$.  Then $\Ext_{\caloz}^i(\cale, \omega_\calz) =0$
  for all $i > 0$.
\end{lemma}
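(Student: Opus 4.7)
My plan is to compute $\omega_\calz$ explicitly as the pullback of a line bundle on $\GG$, then mimic the proof of Proposition~\ref{prop:tiltZ} with $\omega_\calz$ absorbed into the Littlewood--Richardson machinery. Since $\calz = \underline{\Spec}(\Sym_\GG(G \otimes \calq))$, the relative cotangent bundle of $p'$ is $\Omega^1_{\calz/\GG} = p'^\ast(G \otimes \calq)$, so $\omega_{\calz/\GG} = p'^\ast\bigl((\det G)^{l} \otimes (\det \calq)^{n}\bigr)$. Combined with the standard expression $\omega_\GG = (\det\calq)^{-m} \otimes \pi^\ast(\det F)^{-l}$, this yields $\omega_\calz = p'^\ast \nu$ with $\nu = (\det\calq)^{n-m}$ up to tensoring with a trivial line bundle. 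Under the hypothesis $m \leq n$ the exponent $n-m$ is nonnegative, so $\nu \cong L_{(n-m,\ldots,n-m)}\calq$ is a genuine Schur functor on $\GG$ --- this is the only place where $m \leq n$ is used.

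Since $p'$ is affine with $p'_\ast\calo_\calz = \Sym_\GG(G \otimes \calq)$, and both $\calt = p'^\ast\calt_0$ and $\omega_\calz = p'^\ast\nu$ are pullbacks from $\GG$, the projection formula (together with the Leray spectral sequence) gives
\[
\Ext^i_\caloz(\cale, \omega_\calz) \;=\; H^i\bigl(\calz, \cEnd(\calt) \otimes \omega_\calz\bigr) \;=\; H^i\bigl(\GG, \cEnd(\calt_0) \otimes \nu \otimes \Sym_\GG(G \otimes \calq)\bigr).
\]
Applying the characteristic-free Cauchy formula (Theorem~\ref{thm:cauchyLR}(i)) to filter $\Sym(G \otimes \calq)$, and noting that the $L_\gamma G$ factors are trivial bundles on $\GG$, it would suffice to show that $H^i\bigl(\GG, \cEnd(\calt_0) \otimes \nu \otimes L_\delta \calq\bigr) = 0$ for all $i > 0$ and all partitions $\delta$ with at most $l$ rows.

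Finally, decomposing $\cEnd(\calt_0) = \bigoplus_{\alpha, \beta \in B_{l,m-l}} (\Wedge^{\alpha'}\calq)^\svee \otimes \Wedge^{\beta'}\calq$ and regarding each $\Wedge^{\beta'_j}\calq$ as a one-column Schur functor, I would iteratively apply the Littlewood--Richardson rule (Theorem~\ref{thm:cauchyLR}(ii)) to $\Wedge^{\beta'}\calq \otimes \nu \otimes L_\delta\calq$. This produces a filtration whose graded pieces are Schur functors $L_\mu\calq$, so the tensor product of interest admits a filtration by summands of the form $(\Wedge^{\alpha'}\calq)^\svee \otimes L_\mu\calq$ with $\alpha \in B_{l,m-l}$. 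Each such summand has vanishing higher cohomology by Proposition~\ref{prop:work}, yielding the desired result. The conceptually delicate step is the first one: without $m \leq n$ the exponent $n-m$ would be negative, no honest partition would represent $\nu$, and the LR reduction to Proposition~\ref{prop:work} would not be available.
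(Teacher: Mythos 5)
Your proposal is correct and follows essentially the same route as the paper's proof: both compute that, up to trivial line bundles, $\omega_\calz \cong p'^*\bigl(\det\calq\bigr)^{n-m}$, reduce to cohomology on $\GG$ via affineness of $p'$, and then use the Cauchy and Littlewood--Richardson filtrations to bring everything into the scope of Proposition~\ref{prop:work}, with $m\le n$ entering precisely so that $(\det\calq)^{n-m}$ is a genuine Schur functor. The only cosmetic difference is that you compute $\omega_{\calz/\GG}$ via the relative cotangent sheaf and observe $\omega_\calz$ is a pullback, whereas the paper computes $p'_*\omega_\calz$ directly from the standard formula for the dualizing sheaf of a symmetric algebra; the two calculations agree.
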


\begin{proof}
  We have $\cale = {p'}^{*} \cale_0$, with $\cale_0 =
  \cHom_\calog(\calt_0,\calt_0)$.  Substituting this and using the
  fact that $\cale_0$ is self-dual, we find
  \begin{align*}
    \Ext_\caloz^i(\cale,\omega_\calz) 
    &= \Ext_\caloz^i({p'}^{*}\cale_0,\omega_\calz) \\
    &= \Ext_\calog^i(\cale_0,p'_* \omega_\calz)\\
    &= H^i(\GG, \cale_0\otimes_\calog p'_* \omega_\calz)\,.
  \end{align*}
  Hence to continue we must be able to compute $p'_*\omega_\calz$.
  Since $\calz = \underline{\Spec} \left(\Sym_{\GG}(G\otimes
    \calq)\right)$, the standard expression, see e.g.\ \cite[Exercise
  III.8.4]{Hartshorne}, for the dualizing sheaf of a symmetric algebra
  gives
  \[
  p'_* \omega_\calz = 
  \omega_\GG \otimes_\caloz \Wedge^{l n} (G \otimes \calq) 
  \otimes_\caloz \Sym_\GG(G \otimes \calq)\,.
  \]
  Furthermore the sheaf $\Omega_\GG$ of differential forms on $\GG$ is
  known to be given by $\Omega_\GG = \calq^\svee \otimes_\calog
  \calr$, where $\calr$ is the tautological sub-bundle of $\pi^*
  F^\svee$ as in~\eqref{eq:taut}.  Hence $\omega_\GG = \Wedge^{l n}
  (\calq^\svee \otimes_\calog \calr)$ and so
  \[
  p'_* \omega_\calz = \Wedge^{l n} (\calq^\svee \otimes_\calog \calr)
  \otimes_\calog \Wedge^{l n} (G \otimes \calq) \otimes_\calog
  \Sym_\GG(G \otimes \calq)\,.
  \]
  Rewriting all the exterior powers in terms of $\calq$, we find
  \begin{equation*}
    \begin{split}
      &\Wedge^{l n} (\calq^\svee \otimes \calr ) \otimes
      \Wedge^{l n} (G \otimes \calq)\\
      &\qquad\qquad= \left(\Wedge^l \calq\right) ^{-m+l} \otimes
      \left(\Wedge^{m-l}\calr\right)^l \otimes \left(\Wedge^n G\right)^l
      \otimes \left(\Wedge^l \calq\right)^n \\
      &\qquad\qquad= \left(\Wedge^l \calq\right) ^{-m+l} \otimes
      \left(\Wedge^{m}F\right)^{-l} \otimes \left(\Wedge^l \calq\right)^{-l}
      \otimes \left(\Wedge^n G\right)^l
      \otimes \left(\Wedge^l \calq\right)^n \\
      &\qquad\qquad= \left(\Wedge^l \calq\right) ^{n-m} \otimes
      \left(\Wedge^{m} F\right)^{-l} \otimes \left(\Wedge^n G\right)^l\,.
    \end{split}
  \end{equation*}
  So finally 
  \[
  \cale_0 \otimes_\calog p'_* \omega_\calz = 
  \left(\Wedge^{m} F\right)^{-l} \otimes \left(\Wedge^n G\right)^l
  \otimes \cale_0 \otimes_\calog \left(\Wedge^l \calq \right)^{n-m}
  \otimes_\calog \Sym_\GG(G \otimes \calq)\,.
  \]
  Discarding the copies of the vector spaces $\Wedge^m F$ and
  $\Wedge^n G$, we find a direct sum of vector bundles of the form
  \[
  \Wedge^{\alpha'} \calq^\svee \otimes_\calog \Wedge^\beta \calq
  \otimes_\calog \left(\Wedge^l \calq \right)^{n-m} \otimes_\calog
  \Sym_\GG (G \otimes \calq)\,,
  \]
  which (since $m \leq n$) are the subject of
  Proposition~\ref{prop:work}. 
\end{proof}

Next we verify Theorem~\ref{thm:tiltZ} for 
\[
\bar E = \End_{\caloz}(\calt) =\Gamma(\calz,\cale)
\qquad\text{and}\qquad 
\bar T = \Gamma(\calz,\calt)\,.
\]
Recall the following consequence of tilting (see
e.g.~\cite{Hille-VdB:2007}).

\begin{proposition}
  \label{prop:HVdB}
  Assume that $\calt$ is a tilting bundle on a smooth variety
  $X$. Then $\rHom_{\calo_X}(\calt,-)$ defines an equivalence of
  derived categories $\cald^b(\coh X )\cong \cald^b(\mod E)$ where
  $E=\End_{\calo_X}(\calt)$.  If $X$ is projective over an
  affine variety then $E$ is finite over its center and has finite
  global dimension.\qed
\end{proposition}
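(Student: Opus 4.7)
The plan is to invoke the standard tilting formalism. First I would work with the unbounded derived category $\cald(\Qcoh X)$ and define the functor
\[
F = \rHom_{\calo_X}(\calt,-) \colon \cald(\Qcoh X) \lto \cald(\Mod E)
\]
together with its left adjoint $G = -\lotimes_E \calt$. The Ext-vanishing part of the tilting hypothesis gives $F(\calt)\cong E$ in $\cald(\Mod E)$, and consequently the counit $GF(\calt) = E\lotimes_E \calt \to \calt$ is an isomorphism. Dually, the unit $E \to FG(E) = F(\calt)$ is an isomorphism.

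The next step is to propagate these isomorphisms from $\calt$ and $E$ respectively. The full subcategory of $\cald(\Qcoh X)$ on which the counit $GF\to\idmap$ is an isomorphism is a localising triangulated subcategory containing $\calt$; since $\calt$ classically generates $\cald^b(\coh X)$ and $\coh X$ compactly generates $\Qcoh X$ in a well-behaved way (using that $X$ is Noetherian of finite Krull dimension, so $\calt$ is perfect), this subcategory is all of $\cald(\Qcoh X)$. The symmetric argument on the module side, using that $E$ generates $\cald(\Mod E)$, shows that $\idmap\to FG$ is also an isomorphism. Restricting to compact objects and then to bounded coherent/finitely generated complexes gives the desired equivalence $\cald^b(\coh X)\cong \cald^b(\mod E)$.

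For the finiteness assertions, assume $X$ is projective over an affine base $\Spec A$. Then $\cEnd_{\calo_X}(\calt)$ is a coherent $\calo_X$-module, so $E=\Gamma(X,\cEnd_{\calo_X}(\calt))$ is a finitely generated $A$-module by coherent-pushforward finiteness. The structural map $A\to \Gamma(X,\calo_X)\to E$ factors through the center of $E$ (it acts by scalar multiplication on every fiber of $\calt$), so $E$ is module-finite over its center. Finally, smoothness of $X$ means $\cald^b(\coh X)$ coincides with the category of perfect complexes; the equivalence transports this to the statement that every object of $\cald^b(\mod E)$ is perfect, which is equivalent to $\gldim E<\infty$.

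The main technical point is the passage from the generator $\calt$ to an equivalence on the full bounded derived categories; once compactness of $\calt$ (which follows from it being a perfect complex on the Noetherian scheme $X$) and classical generation are in hand, this is a formal consequence of Beilinson–Bondal–Keller style arguments, as packaged in \cite{Hille-VdB:2007}.
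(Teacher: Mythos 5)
The paper offers no proof of this proposition; it is recorded as a known consequence of the tilting formalism, with a pointer to the survey of Hille and Van den Bergh~\cite{Hille-VdB:2007}. Your argument is essentially the standard one found there (and in Keller's and Bondal's treatments), so the comparison is really between your write-up and the canonical proof rather than between two proofs in this paper. Your reduction to unbounded categories, propagation of the (co)unit isomorphisms from the generators $\calt$ and $E$ along localising subcategories (using compactness of $\calt$ so that $\rHom_{\calo_X}(\calt,-)$ commutes with coproducts), and the finiteness claims via coherent pushforward and the central action of $A$ are all correct and complete.

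The one place that deserves more care is the last sentence of your second paragraph, where you pass from the equivalence of compact objects to the equivalence of bounded categories. Restricting to compacts gives $\cald^b(\coh X)\cong\cald^{\mathrm{perf}}(E)$, but $\cald^{\mathrm{perf}}(E)=\cald^b(\mod E)$ is \emph{equivalent} to $\gldim E<\infty$, which you only establish afterward. As written there is a small circularity. The standard way to break it is to note that $F=\rHom_{\calo_X}(\calt,-)$ has cohomological amplitude contained in $[0,\dim X]$ (because $\calt$ is locally free and $X$ is projective over an affine, so the amplitude of $\R\Gamma$ is bounded), hence carries $\cald^b(\coh X)$ into $\cald^b(\mod E)$; one then shows $G=-\lotimes_E\calt$ carries $\cald^b(\mod E)$ back into $\cald^b(\coh X)$ by a Serre-vanishing (or amplitude) argument bounding the cohomology of $G(M)$ below. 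With that, the equivalence on bounded categories is obtained directly, and finite global dimension of $E$ then falls out, as you say, from $\cald^b(\coh X)$ consisting of perfect complexes. This is a genuine but minor gap in the exposition, not in the idea.
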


\begin{proposition}
  \label{prop:thmA}
  Assume $m \leq n$.  Then
  \begin{enumerate}[\quad(i)]
  \item $\bar E \cong \End_R(\bar T)$;
  \item $\bar E$ and $\bar T$ are MCM $R$-modules; and
  \item $\bar E$ has finite global dimension.
  \end{enumerate}
\end{proposition}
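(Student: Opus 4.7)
The plan is to proceed by first showing that $\bar E=\R q'_*\cale$ and $\bar T=\R q'_*\calt$ are concentrated in degree zero, then deducing the MCM property of $\bar E$ via Grothendieck duality applied to Lemma~\ref{lem:calE-MCM}, then obtaining (i) by a reflexivity argument on the smooth locus of $R$, and finally invoking Proposition~\ref{prop:HVdB} for the global-dimension bound in (iii).

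For the cohomology vanishing, since $\Spec R$ is affine it suffices to check $H^i(\calz,\cale)=H^i(\calz,\calt)=0$ for $i>0$. The former is immediate from Proposition~\ref{prop:tiltZ}. For $\calt$, using $p'_*\caloz=\Sym_{\GG}(G\otimes\calq)$, the cohomology decomposes via Cauchy's filtration (Theorem~\ref{thm:cauchyLR}(\ref{item:cauchy})) on $\Sym$ and the Littlewood-Richardson rule (Theorem~\ref{thm:cauchyLR}(\ref{item:LR})) into summands of the form $H^i(\GG,L_\mu\calq)$ with $\mu$ a partition having at most $l$ parts. Such $\mu$ corresponds to the $T$-weight $\sum_{j=1}^l\mu_j\varepsilon_j$, which is dominant, so Kempf vanishing (Theorem~\ref{thm:kv}) kills the higher cohomology exactly as in the proof of Proposition~\ref{prop:work}.

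For (ii), Grothendieck duality applied to the proper birational morphism $q'\colon\calz\to\Spec R$ between Cohen-Macaulay schemes of equal dimension, with $\calz$ smooth, identifies $q'^{!}\omega_R=\omega_\calz$, and hence
\[
\R\Hom_R(\bar E,\omega_R)=\R q'_*\,\R\cHom_{\caloz}(\cale,\omega_\calz)=\R q'_*(\cale\otimes\omega_\calz),
\]
where I use the self-duality $\cale^\vee\cong\cale$ of $\cale=\calt^\vee\otimes\calt$. Lemma~\ref{lem:calE-MCM} (which uses the hypothesis $m\le n$), together with the affineness of $\Spec R$, forces the right-hand side into degree zero, so $\bar E$ is MCM. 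Since the empty partition lies in $B_{l,m-l}$, the summand $\caloz={p'}^{*}\Wedge^{\emptyset}\calq$ of $\calt$ makes $\calt$ a direct summand of $\cale$, whence $\bar T$ is a direct summand of $\bar E$ and is likewise MCM.

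For (i), there is a natural ring map $\bar E\to\End_R(\bar T)^\circ$ induced by $q'_*$; it is an isomorphism on the open set $U\subset\Spec R$ where $\phi$ has rank $l$, because $q'$ is an isomorphism over $U$. Since $R$ is smooth in codimension $2$ (hence normal), $\Spec R\setminus U$ has codimension $\geq 2$. Both $\bar E$ (MCM, so $S_2$) and $\End_R(\bar T)^\circ$ ($\Hom$ into the reflexive $\bar T$, hence reflexive) are reflexive $R$-modules over the normal $R$, so are determined by their restrictions to $U$; the map is therefore an isomorphism. Finally, (iii) follows from Proposition~\ref{prop:HVdB}, since $\calt$ is a tilting bundle on the smooth $\calz$ and $\calz$ is projective over $\Spec R$ via the closed immersion $\calz\hookrightarrow H\times\GG$ followed by the projection $H\times\GG\to H$. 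The main technical point is the precise setup of Grothendieck duality with the identification $q'^{!}\omega_R=\omega_\calz$; the remaining steps are structural.
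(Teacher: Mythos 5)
Your proof follows the paper's argument essentially step for step: vanishing of higher direct images of $\cale$ via Proposition~\ref{prop:tiltZ}, Grothendieck duality with $q'^!\omega_R=\omega_\calz$ combined with Lemma~\ref{lem:calE-MCM} for the MCM claim, the direct-summand trick $\caloz\subset\calt\subset\cale$ for $\bar T$, reflexivity plus the codimension-$\geq 2$ exceptional locus for $\bar E\cong\End_R(\bar T)^\circ$, and Proposition~\ref{prop:HVdB} for finite global dimension. The only deviation is that you additionally verify $H^i(\calz,\calt)=0$ for $i>0$ via Cauchy/Littlewood--Richardson/Kempf, which is correct but not needed since the summand argument already yields everything you use about $\bar T$.
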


\begin{proof} 
  That $\bar{E}$ has finite global dimension follows from
  Propositions~\ref{prop:tiltZ} and~\ref{prop:HVdB}.  Since
  $\Ext_\caloz^i(\calt,\calt)=0$ for $i>0$ by
  Proposition~\ref{prop:tiltZ}, the higher direct images of $\cale$
  vanish, i.e.
  \begin{equation*}
    \R q'_\ast\cale = q'_\ast \cale = \bar E\,.
  \end{equation*}
  To prove that $\bar E$ is MCM we must show that $\Ext_R^i(\bar
  E,\omega_R)=0$ for $i > 0$, where $\omega_R$ is the dualizing module
  for $R$.  Replacing $\bar E$ by $\R q'_*\cale$ and using duality for
  the proper morphism $q'$~\cite[1.2.22]{Weyman:book}, we see that
  this is equivalent to showing $\Ext_\caloz^i(\cale, {q'}^!\omega_R)=0$
  for $i>0$.  But ${q'}^!\omega_R = \omega_\calz$ is the dualizing
  sheaf for $\calz$, so Lemma~\ref{lem:calE-MCM} implies that $\bar E$
  is MCM.

  As $\caloz$ is a direct summand of $\calt$ we see that $\bar{T}$ is
  a summand of $\bar{E}$, whence $\bar{T}$ is Cohen-Macaulay as
  well. Furthermore we have an obvious homomorphism
  $i\colon\End_{\caloz}(\calt)\to \End_R(\bar{T})$ between reflexive
  $R$-modules, which is an isomorphism on the locus where $q' \colon
  \calz\to \Spec R$ is an isomorphism. The complement of this locus is
  given by the matrices which have rank $<l$, a subvariety of
  $\Spec R$ of codimension $\ge 2$.  Hence $i$ is an isomorphism.
\end{proof}

Propositions~\ref{prop:tiltZ} and~\ref{prop:thmA} imply
Theorems~\ref{thm:desing} and~\ref{thm:tiltZ} provided we can show
$T \cong \bar T$. We do this next. Recall that for a partition
$\alpha$ we denote
\[
N_\alpha = \im \left(L^\alpha (F^\svee) \otimes R
  \xto{\left(L^\alpha(\phi^\svee)\right) \otimes R} L^\alpha
  (G^\svee)\otimes R\right)\,.
\]
Set $\caln_\alpha = {p'}^*L^\alpha \calq$.

\begin{proposition}
  \label{prop:image-sections}
  With notation as above, we have 
  \[
  N_\alpha \cong \Gamma (\calz,\ \caln_\alpha)\,.
  \]
\end{proposition}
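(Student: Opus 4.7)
The strategy is to exhibit both $N_\alpha$ and $\Gamma(\calz, p'^{\ast}L_\alpha\calq)$ as the image of one and the same $R$-linear map, namely $L_\alpha(\phi^\svee)$ factored through sections on $\calz$. Since $p'\colon\calz\to\GG$ is affine with $p'_\ast\calo_\calz=\Sym_\GG(G\otimes\calq)$, one has $\Gamma(\calz,p'^{\ast}L_\alpha\calq)=H^0(\GG,L_\alpha\calq\otimes_\calog\Sym_\GG(G\otimes\calq))$. In particular $\Gamma(\calz,\calo_\calz)=R$: the morphism $q'\colon\calz\to\Spec R$ is proper and birational, and since $R$ is normal one has $q'_\ast\calo_\calz=\calo_R$.

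On $\calz$ the defining condition $\im\phi\subset V$ says exactly that $\phi^\svee$ vanishes on $p'^{\ast}\calr$, so $\phi^\svee$ factors as
\[
F^\svee\otimes\calo_\calz\twoheadrightarrow p'^{\ast}\calq\xrightarrow{\ \bar\phi^\svee\ }G^\svee\otimes\calo_\calz
\]
with the first arrow the pullback of the tautological surjection on $\GG$. Applying $L_\alpha$ (which preserves surjections) and then $\Gamma(\calz,-)$ yields a diagram of $R$-modules
\[
L_\alpha F^\svee\otimes R\xrightarrow{\ \pi\ }\Gamma(\calz,p'^{\ast}L_\alpha\calq)\xrightarrow{\ \iota\ }L_\alpha G^\svee\otimes R
\]
whose composition is $L_\alpha(\phi^\svee)\otimes R$, and whose image is by definition $N_\alpha$. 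The proposition therefore reduces to showing $\pi$ is surjective and $\iota$ is injective.

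For surjectivity of $\pi$, let $\calk$ be the kernel of the sheaf surjection $L_\alpha F^\svee\otimes\calo_\calz\twoheadrightarrow p'^{\ast}L_\alpha\calq$; it suffices to show $H^1(\calz,\calk)=0$. Applying Theorem~\ref{thm:cauchyLR}(\ref{item:LR}) to $L_\alpha$ of the pullback of the tautological sequence $0\to\calr\to\pi^{\ast}F^\svee\to\calq\to 0$ gives a filtration of $\calk$ by pieces $p'^{\ast}(L_\beta\calr\otimes L_\gamma\calq)$ with $\beta\neq\emptyset$. Pushing these down by the affine morphism $p'$ and further filtering $\Sym_\GG(G\otimes\calq)$ by Cauchy (Theorem~\ref{thm:cauchyLR}(\ref{item:cauchy})), the required vanishing reduces to $H^i(\GG,L_\beta\calr\otimes L_\epsilon\calq)=0$ for $i\geq 1$ and $\beta\neq\emptyset$, a Bott-Kempf vanishing of the same flavour as Proposition~\ref{prop:work}: either the associated weight on the Levi is dominant for $G=\GL(m)$ so that Kempf vanishing applies directly, or else it is singular and the intermediate-parabolic $P^\circ$ trick used in the proof of Proposition~\ref{prop:work} kills all $\R^i\ind_B^G$.

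For injectivity of $\iota$, note that $\bar\phi^\svee\colon p'^{\ast}\calq\to G^\svee\otimes\calo_\calz$ is injective on the dense open locus of $\calz$ lying over the smooth locus of $\Spec R$ (where $\theta\colon G\to V$ is surjective, so $\theta^\svee$ is injective), and hence so is $L_\alpha(\bar\phi^\svee)$. A global section of $p'^{\ast}L_\alpha\calq$ in the kernel of $\iota$ would restrict to zero on this dense open set, but $p'^{\ast}L_\alpha\calq$ is locally free on the integral smooth scheme $\calz$, so such a section must vanish. The main obstacle in the plan is the Bott-Kempf vanishing needed for $\pi$; everything else is essentially formal.
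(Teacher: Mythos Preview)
Your overall strategy---factor $\phi^\svee$ through $p'^*\calq$, apply $L_\alpha$, take global sections, and show the resulting $\pi$ is surjective and $\iota$ injective---is exactly the paper's, and your injectivity argument for $\iota$ is fine (the paper says it slightly more crisply: $\bar\phi^\svee$ is a map of vector bundles that is generically injective, hence injective as a sheaf map, and $\Gamma$ preserves monomorphisms).

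The gap is in your surjectivity argument. You claim that the needed vanishing reduces to
\[
H^i(\GG,\, L_\beta\calr \otimes_{\calo_\GG} L_\epsilon\calq)=0
\quad\text{for all } i\geq 1,\ \beta\neq\emptyset,
\]
and that this follows from a Kempf/$P^\circ$-style argument as in Proposition~\ref{prop:work}. But this vanishing is simply false, already in characteristic zero. Take $\GG=\PP^1$ (so $l=1$, $m=2$), $\beta=(2)$, $\epsilon=\emptyset$; then $L_\beta\calr\otimes L_\epsilon\calq=\calr^{\otimes 2}=\calo_{\PP^1}(-2)$ and $H^1(\PP^1,\calo(-2))=K\neq 0$. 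The weight $(0,2)$ attached to this bundle is neither dominant nor singular; Bott's theorem in characteristic zero gives $H^1=\Wedge^2 F^\vee$, and the $P^\circ$ trick of Proposition~\ref{prop:work} cannot help because it was tailored to the very special case where the $\calr$-factor is a power of $\Wedge^{m-l}\calr$. So once you over-filter $\calk$ into pieces $L_\beta\calr\otimes L_\gamma\calq$, you lose: the individual pieces can have nonvanishing $H^1$, and it is only the extension structure among them that kills it.

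The paper sidesteps this entirely. Rather than proving $H^1(\calz,\calk)=0$, it observes that it suffices to show that the (larger) surjection
\[
\pi^*\!\left(L_\alpha(F^\vee)\otimes \Sym(G\otimes F^\vee)\right)\;\twoheadrightarrow\; L_\alpha\calq\otimes_{\calo_\GG}\Sym_\GG(G\otimes\calq)
\]
stays surjective after $\Gamma(\GG,-)$. Both sides carry compatible, functorial Schur filtrations (Theorem~\ref{thm:cauchyLR}), so one reduces to showing that $\pi^*L_\delta(F^\vee)\to L_\delta\calq$ remains surjective on global sections for every~$\delta$. But $\Gamma(\GG,L_\delta\calq)=\ind_B^G\tilde\delta=L_\delta(F^\vee)$ by the definition of Schur modules, so the map is in fact an isomorphism. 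No cohomology vanishing for the kernel is needed at all; the trick is to replace the source by a \emph{globally trivial} bundle before filtering.
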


\begin{proof}
  With $\phi \colon G \otimes S \to F \otimes S$ the generic map
  defined over $S$, let $\psi = j^*q^* \phi$ be the map induced over
  $\calz$.  Then the fiber of $\psi^\svee$ over a point $(V, \theta)$
  factors as
  \[
  F^\svee\to V^\svee\to G^\svee 
  \]
  where the first map is the dual of the given inclusion $V \into F$.
  Thus $\psi^\svee$ factors as
  \[
  {p'}^\ast\pi^\ast F^\svee 
  \to 
  {p'}^{\ast}\calq
  \to
  {p'}^{\ast}\pi^\ast G^\svee\,.
  \]
  The first map is obviously surjective. The second map is injective
  since it is a map between vector bundles which is generically
  injective.  Schur functors preserve epimorphisms and monomorphisms
  of vector bundles~\cite[\S8.1]{Fulton:1997}, so we get an epi-mono
  factorization
  \[
  L^\alpha(\psi^\svee) \colon 
  L^\alpha ({p'}^{\ast}\pi^\ast F^\svee)
  \to 
  L^\alpha {p'}^{\ast}\calq
  \to
  L^\alpha ({p'}^{\ast}\pi^\ast G^\svee)\,.
  \]
  To prove the claim it is clearly sufficient to show that the first
  map remains an epimorphism after applying $q'_*$, i.e.\ that the
  epimorphism 
  \[
  \pi^* L^\alpha (F^\svee) \otimes_\calog \Sym_\GG (G \otimes
  \calq)
  \to 
  L^\alpha \calq \otimes_\calog \Sym_\GG (G \otimes \calq)
  \]
  remains an epimorphism upon applying $\Gamma(\GG,-)$.
  In fact it suffices to show that 
  \[
  \pi^* \left(L^\alpha (F^\svee) \otimes_\calog \Sym_\GG (G \otimes
  F^\svee)\right)
  \to 
  L^\alpha \calq \otimes_\calog \Sym_\GG (G \otimes \calq)
  \]
  remains an epimorphism upon applying $\Gamma(\GG,-)$.  By
  Theorem~\ref{thm:cauchyLR}, source and target are filtered by Schur
  functors, so it is enough to show that for any partition $\delta$
  the canonical map
  \[
  \pi^* L^\delta (F^\svee) \to L^\delta \calq
  \]
  remains an epimorphism upon applying $\Gamma(\GG,-)$.  But taking
  global sections of this map gives
  \[
  L^\delta(F^\svee) \to \Gamma(\GG, L^\delta\calq)
  \]
  which is even an isomorphism by the definition of Schur modules.
  Hence we are done.
\end{proof}

Set ${\bar T}_\alpha = \Gamma(\calz, \calt_\alpha)$, where
$\calt_\alpha = p'^*(\Wedge^{\alpha'} \calq)$ as in
Theorem~\ref{thm:tiltG}, and recall
\[
T_\alpha = \im\left(\Wedge^{\alpha'} (F^\svee) \otimes R
  \xto{(\Wedge^{\alpha'}\phi^\svee)\otimes R}
  \Wedge^{\alpha'}(G^\svee) \otimes R\right)\,.
\]
Filtering everything by Schur functors and applying
Proposition~\ref{prop:image-sections}, we see that these coincide:

\begin{corollary}\label{cor:T-Tbar}
  We have $T_\alpha \cong {\bar T}_\alpha$ for each $\alpha \in B_{l,
    m-l}$.  In particular $T \cong \bar T$ is a maximal Cohen-Macaulay
  $R$-module.\qed
\end{corollary}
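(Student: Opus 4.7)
The plan is to reduce the corollary to Proposition~\ref{prop:image-sections} via the Schur-functor filtration of $\Wedge^{\alpha'}$, using the same factorization strategy as in the proof of that proposition. First, I would repeat the factorization argument from Proposition~\ref{prop:image-sections} with $L_\alpha$ replaced by $\Wedge^{\alpha'}$. Since the exterior-power functor preserves surjections of vector bundles and, applied to an injection of vector bundles on an integral scheme, yields a morphism that is again injective (it is generically injective, and a generically injective map of vector bundles on an integral scheme is injective), applying $\Wedge^{\alpha'}$ to the factorization $p'^*\pi^*F^\vee\twoheadrightarrow p'^*\calq\hookrightarrow p'^*\pi^*G^\vee$ of $\psi^\vee$ yields
\[
\Wedge^{\alpha'}p'^*\pi^*F^\vee\twoheadrightarrow\calt_\alpha\hookrightarrow \Wedge^{\alpha'}p'^*\pi^*G^\vee.
\]
Taking $\Gamma(\calz,-)$ (which is left exact) and using $p'^*\pi^*V=V\otimes_K\calo_\calz$ together with $\Gamma(\calz,\calo_\calz)=R$ produces a diagram
\[
\Wedge^{\alpha'}F^\vee\otimes R\xrightarrow{\;f_\alpha\;}\bar T_\alpha\hookrightarrow \Wedge^{\alpha'}G^\vee\otimes R
\]
whose composition is the defining map $\Wedge^{\alpha'}\phi^\vee$ of $T_\alpha$. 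Hence $T_\alpha=\im f_\alpha\subseteq\bar T_\alpha$ automatically, and it remains to show that $f_\alpha$ is surjective.

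For surjectivity of $f_\alpha$ I would invoke Theorem~\ref{thm:cauchyLR}(ii), which provides a natural filtration on $\Wedge^{\alpha'}(-)$ whose associated graded is a direct sum of Schur functors $L_\gamma$. Applied in parallel to $F^\vee$ and to $\calq$, this yields compatible filtrations on the two sheaves above, and the induced surjections on graded quotients are precisely the maps $p'^*\pi^*L_\gamma F^\vee\twoheadrightarrow p'^*L_\gamma\calq$. By Proposition~\ref{prop:image-sections}, each of these stays surjective after $\Gamma(\calz,-)$. A five-lemma induction along the filtration then upgrades this to surjectivity of $f_\alpha$ itself, provided $H^1(\calz,-)$ vanishes on every successive sub-bundle in the filtration of $\calt_\alpha$ (and on the corresponding sub-bundles of $\Wedge^{\alpha'}p'^*\pi^*F^\vee$, which however is a trivial bundle). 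Each such sub-bundle pushes down via $p'_*$ to a direct sum of sheaves of the form $L_\nu\calq\otimes\Sym_\GG(G\otimes\calq)$ on $\GG$, whose higher cohomology vanishes by the combination of Theorem~\ref{thm:cauchyLR}(i) (Cauchy) and Proposition~\ref{prop:work} already used in the proof of Proposition~\ref{prop:tiltZ}.

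For the ``in particular'' statement, the empty partition corresponds to $\Wedge^{\emptyset}\calq=\calo_\GG$, so $\calo_\calz$ is a direct summand of $\calt$; consequently $\bar T=\Hom_{\calo_\calz}(\calo_\calz,\calt)$ is an $R$-module direct summand of $\bar E=\End_{\calo_\calz}(\calt)^\circ$, which is MCM by Proposition~\ref{prop:thmA}. The main obstacle is the bookkeeping in the filtration induction — one must track the sub-bundles and verify the required higher cohomology vanishes at every stage — but no new vanishing input is needed beyond what was already assembled in proving Propositions~\ref{prop:work} and~\ref{prop:tiltZ}.
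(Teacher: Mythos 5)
Your proposal is correct and fills in, in detail, exactly the argument the paper compresses into the sentence ``Filtering everything by Schur functors and applying Proposition~\ref{prop:image-sections}, we see that these coincide.'' The only (harmless) inefficiency is that the $H^1$-vanishing you invoke for the filtration steps is not actually required: the diagram chase (a four-lemma-style argument) proving surjectivity of $f_\alpha$ needs only left-exactness of $\Gamma$ on the target column, surjectivity of $\Gamma(V_i\otimes\calo_\calz)\to\Gamma((V_i/V_{i-1})\otimes\calo_\calz)$ (automatic since the vector-space filtration is $K$-split), surjectivity on graded pieces from Proposition~\ref{prop:image-sections}, and the inductive hypothesis --- though your $H^1$ claim is itself true and follows from Proposition~\ref{prop:work} as you say, and the sub-bundles in question are not literally direct sums after $p'_*$ but have filtrations with the stated graded pieces, which suffices for the vanishing.
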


Assembling the pieces, we obtain Theorem~\ref{thm:tiltZ} and, as a
consequence, Theorem~\ref{thm:desing}.\qed

\begin{remark}
  \label{rmk:Nalpha-Weyman}
  It follows from Proposition~\ref{prop:image-sections} that
  $N_\alpha=M(\alpha,0)$ in the notation of~\cite[\S
  6]{Weyman:book}. In particular the very general result~\cite[Cor
  (6.5.17)]{Weyman:book} gives an alternative way to see that
  $N_\alpha$ is Cohen-Macaulay in characteristic
  zero. Furthermore~\cite[Example (6.5.18)]{Weyman:book} shows that
  $N_{2}$ is not Cohen-Macaulay in characteristic~$2$.
\end{remark}

\begin{example}\label{eg:BLV10}
  Assume that $m-l=1$ with $m\le n$. Then we have $\GG=\PP^{m-1}$. Set
  $\PP = \PP^{m-1}$, so that $\calq=\Omega^\svee_{\PP}(-1)$, and let
  $\alpha = 1^a$ for some $a$, $0 \leq a \leq m-1$.  We find
  \begin{align*}
    \calt_{\alpha} &= {p'}^{\ast} \left(\Wedge^a\,
      \Omega^\svee_{\PP}(-a)\right)\\
    &={p'}^{\ast} \left(\Wedge^{m-1-a}\,
      \Omega_{\PP}\otimes_{\calo_{\PP}} \omega^{-1}_\PP(-a)\right)\\
    &={p'}^{\ast}\left(\Wedge^{m-1-a}\, \Omega_{\PP}(m-a)\right)\,.
  \end{align*}
  Thus in the notation of~\cite{Buchweitz-Leuschke-VandenBergh:2010}
  we have $T_\alpha=M_{m-a} = \cok \Wedge^{m-a} X$.
\end{example}

\section{Proof of Theorem~\ref{thm:BO}}
We now need to refer to the two resolutions of $\Spec R$ in a uniform
way, so we introduce appropriate symmetrical notation for this section
only.  We start by putting $G_1=F^\svee$ and $G_2=G$ so that
\[
H=\Sym_K(G_1\otimes G_2)\,.
\]
We also put $n_i=\rank_K G_i$ and $\GG_i=\Grass(n_i-l,G_i)$.  Thus $n_1=m$,
$n_2=n$, and we have canonically $\GG_1\cong\GG$.

For symmetry we also put $\calz_1=\calz$. In general we will decorate 
the notations in the diagram~\eqref{fig:Springer} by a ``$1$'' or a
``$2$'' depending on whether they refer to $\calz_1$ or $\calz_2$. 

We now explain how we prove Theorem~\ref{thm:BO}.  In
Proposition~\ref{prop:tiltZ} we have constructed tilting bundles
$\calt_{1}$, $\calt_2$ on $\calz_1$, $\calz_2$. For our purposes it
turns out to be technically more convenient to use the tilting bundle
$\calt^\svee_1$ on $\calz_1$ rather than $\calt_1$.  With $E_{1}'$,
$E_2$ the endomorphism rings of $\calt_1^\svee$ and $\calt_2$
respectively, it turns out that if $n_1\le n_2$ then $E_1'\cong e
E_2e$ for a suitable idempotent $e\in E_2$.  Thus we immediately
obtain a fully faithful embedding $D^b(\coh \calz_1)\hookrightarrow
D^b(\coh \calz_2)$.  We then show that this embedding coincides with
the indicated Fourier-Mukai transform.

Now we proceed with the actual proof. On $\GG_i$ we have the
tautological exact sequence
\[
0\to \calr_i\to \pi_i^\ast G_i\to \calq_i\to 0\,.
\]
We also define
\[
\hat\calz=\calz_1\times_H \calz_2\,.
\]
There are projection maps $r_1\colon\hat \calz\to\calz_1$,
$r_2\colon\hat\calz\to\calz_2$. These fit together in the following
commutative diagram.
\[
\begin{tikzcd}
  {} &{} & \hat\calz\arrow{dl}{r_1} \arrow{dr}[swap]{r_2} \arrow[bend right]{ddll}[swap]{p''_1}
  \arrow[bend left]{ddrr}{p''_2}&&\\ 
  {} &\calz_1\arrow{dr}[swap]{q'_1} \arrow{dl}{p'_1} &&
  \calz_2 \arrow{dl}{q'_2} \arrow{dr}[swap]{p'_2}&\\ 
  \GG_1&& \Spec R &&\GG_2
\end{tikzcd}
\]
Let $H_0\subset \Spec R$ be the (open) locus of tensors of rank
exactly $l$, so that the maps $q_i'$ and $r_i$, for $i=1,2$, are all
isomorphisms above $H_0$.  Let $\hat\calz_0$ be the inverse image of
$H_0$ in $\hat \calz$.

Let $\alpha$ be a partition and set $\calt_{\alpha,i} =
{p_i'}^*\left(\Wedge^{\alpha'}\calq_i\right)$ for $i=1,2$.  Further
set $B_i = B_{l,n_i-l}$, 
\[
\calt_i=\bigoplus_{\alpha\in B_i} \calt_{\alpha,i}
\qquad\text{and}\qquad 
E_i =\End_{\calo_{\calz_i}}(\calt_i)\,.
\]
By Theorem~\ref{thm:tiltZ}, $\calt_i$ is a tilting bundle on $\calz_i$
and hence $\cald^b(\coh \calz_i )\cong \cald^b(\mod E_i)$.  

Here is an asymmetrical piece of notation.  Assume that  $n_1 \leq 
n_2$. Then $B_1\subseteq B_2$. Set
\begin{equation}
  \label{eq:T2prime}
  \calt_2' = \bigoplus_{\alpha \in B_1} \calt_{\alpha,2} \subseteq\bigoplus_{\alpha \in B_2} \calt_{\alpha,2}=\calt_2
  \qquad\text{and}\qquad
  E_2' = \End_{\calo_{\calz_2}}(\calt_2')\,.
\end{equation}
As $\calt_2'$ is a direct summand of $\calt_2$, we have $E_2' = e E_2e$
for a suitable idempotent $e \in E_2$.  Hence there is a fully
faithful embedding
\begin{equation}
\label{ff}
\tilde e \colon \cald^b(\mod E_2') \into \cald^b(\mod E_2)
\end{equation}
given by $\tilde e(\calm) = (E_2)e   \otimes_{E_2'} \calm$. 

Put $E_1' = \End_{\calo_{\calz_1}}(\calt_1^\svee)$.  Note that
it follows easily from Grothendieck duality that $\calt_1^\svee$ is
also a tilting bundle on $\calz_1$.

Finally set
\[
T_{\alpha,i} = {q_i'}_* \calt_{\alpha,i}\,,
\qquad
T_i = {q_i'}_* \calt_i\,,
\]
and $T_2' = {q_2'}_* \calt_2'$.  By Theorem~\ref{thm:tiltZ}, we have
$E_i = \End_R(T_i)$, $E_1' = \End_R(T_1^\svee)$, and $E_2'
= \End_R(T_2')$. 

\begin{lemma} 
  One has
  \(
  \hat\calz=\underline{\Spec}
  \left(\Sym_{{\GG_1\times\GG_2}}(\calq_1\boxtimes \calq_2)\right)\,.
  \)
\end{lemma}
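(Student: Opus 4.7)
The plan is to realize each $\calz_i$ as the total space of a sub-vector-bundle of the trivial $H$-bundle over $\GG_i$, and then identify $\hat\calz = \calz_1 \times_H \calz_2$ as the total space of the intersection of the two pulled-back subbundles over $\GG_1 \times \GG_2$. The proof is essentially bookkeeping of dualities together with one elementary tensor identity.

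First, recall from the start of Section~\ref{sect:proofsAC} that $\calz_1 = \underline{\Spec}\bigl(\Sym_{\GG_1}(G_2 \otimes \calq_1)\bigr)$, so $\calz_1 \to \GG_1$ is the total space of the vector bundle $G_2^\vee \otimes \calq_1^\vee$. The surjection $\pi_1^\ast G_1 \twoheadrightarrow \calq_1$ dualizes to an inclusion $\calq_1^\vee \hookrightarrow \pi_1^\ast G_1^\vee$, giving a subbundle inclusion
\[
G_2^\vee \otimes \calq_1^\vee \hookrightarrow G_1^\vee \otimes G_2^\vee \otimes \calo_{\GG_1} = H \otimes \calo_{\GG_1}\,.
\]
Passing to total spaces produces an embedding $\calz_1 \hookrightarrow \GG_1 \times H$ whose second projection is precisely $q_1'$. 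By the symmetric construction with $(F,G)$ replaced by $(G^\vee, F^\vee)$, one gets $\calz_2 \hookrightarrow \GG_2 \times H$ as the total space of the subbundle $G_1^\vee \otimes \calq_2^\vee \subset H \otimes \calo_{\GG_2}$.

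Second, $\hat\calz = \calz_1 \times_H \calz_2$ embeds into $(\GG_1 \times H) \times_H (\GG_2 \times H) = \GG_1 \times \GG_2 \times H$. Over a point $(W_1, W_2) \in \GG_1 \times \GG_2$, its fiber is the intersection of the two subspaces $W_1^\perp \otimes G_2^\vee$ and $G_1^\vee \otimes W_2^\perp$ inside $G_1^\vee \otimes G_2^\vee$. The elementary identity $(U_1 \otimes V_2) \cap (V_1 \otimes U_2) = U_1 \otimes U_2$ (for subspaces $U_i \subseteq V_i$, proved by choosing complements) identifies this intersection with $W_1^\perp \otimes W_2^\perp = \calq_1^\vee \otimes \calq_2^\vee$ evaluated at $(W_1, W_2)$. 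Globalizing, $\hat\calz$ is the total space of the subbundle $\calq_1^\vee \boxtimes \calq_2^\vee = (\calq_1 \boxtimes \calq_2)^\vee$ of $H \otimes \calo_{\GG_1 \times \GG_2}$, which by definition is $\underline{\Spec}\bigl(\Sym_{\GG_1 \times \GG_2}(\calq_1 \boxtimes \calq_2)\bigr)$.

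The main obstacle, such as it is, amounts to untangling the various dualities used to set up the two Springer resolutions simultaneously, together with verifying that the fiberwise description assembles correctly into a global isomorphism of schemes over $\GG_1 \times \GG_2$. The linear-algebra identity on intersections of subspaces of a tensor product does all the real work; alternatively one could check the isomorphism on the level of quasi-coherent sheaves of $\calo_{\GG_1\times\GG_2}$-algebras by identifying generators and relations, but the geometric argument above is more transparent.
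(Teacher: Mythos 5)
Your argument is correct, and it proves the same thing the paper proves, but it takes a genuinely different (dual) route. The paper's proof is a formal chain of fiber-product identities that reduces the claim to a tensor-product identity for sheaves of symmetric algebras over $\GG_1\times\GG_2$, namely
\[
\Sym_{\GG_1\times\GG_2}(\calq_1\boxtimes \pi^\ast_2 G_2)
\otimes_{\Sym_{\GG_1\times\GG_2}(\pi_1^\ast G_1\boxtimes \pi_2^\ast G_2)}
\Sym_{\GG_1\times\GG_2}(\pi^\ast_1 G_1\boxtimes \calq_2)
\;\cong\;
\Sym_{\GG_1\times\GG_2}(\calq_1\boxtimes \calq_2)\,,
\]
which is the algebraic incarnation of your subspace-intersection identity, expressed in terms of the quotient surjections $\pi_i^*G_i\onto\calq_i$ rather than the dual inclusions $\calq_i^\svee\into\pi_i^*G_i^\svee$. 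You instead work with total spaces and verify the intersection fiberwise via $(U_1\otimes V_2)\cap(V_1\otimes U_2)=U_1\otimes U_2$. Both approaches buy the same thing and contain the same mathematical content; yours is more geometric and arguably easier to visualize. The one place where you are somewhat cavalier is the word ``Globalizing'': you check equality of fibers, but to identify $\calz_1\times_H\calz_2$ as the total space of $\calq_1^\svee\boxtimes\calq_2^\svee$ one must check that the scheme-theoretic intersection of the two subbundles $\calq_1^\svee\boxtimes G_2^\svee$ and $G_1^\svee\boxtimes\calq_2^\svee$ inside $G_1^\svee\otimes G_2^\svee\otimes\calo_{\GG_1\times\GG_2}$ is again a subbundle, not just a subsheaf with the right fibers. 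This is exactly what the paper's explicit tensor-product computation delivers, using that $\pi_1^*G_1\boxtimes\pi_2^*G_2\onto\calq_1\boxtimes\calq_2$ is surjective with locally free kernel $\calr_1\boxtimes\pi_2^*G_2+\pi_1^*G_1\boxtimes\calr_2$. So your proof is correct; you should just be aware that the ``bookkeeping'' at the end is where the actual verification lives.
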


\begin{proof} This is a straightforward computation.  
  \begin{align*}
    \calz_1\times_H\calz_2&=
    \calz_1\times_{\GG_1\times H} (\GG_1\times H)\times_H(\GG_2\times H)\times_{\GG_2\times H}\calz_2\\
    &=\calz_1\times_{\GG_1\times H} (\GG_1\times \GG_2\times H)\times_{\GG_2\times H}\calz_2\\
    &=(\calz_1\times \GG_2)\times_{\hat\GG\times H}(\calz_2\times \GG_1)\\
    &=\underline{\Spec}\left(
      \Sym_{{\GG_1\times\GG_2}}(\calq_1\boxtimes \pi^\ast_2 G_2)
      \otimes_{\Sym_{{\GG_1\times\GG_2}}(\pi_1^\ast G_1\boxtimes \pi_2^\ast G_2)}
      \Sym_{{\GG_1\times\GG_2}}(\pi^\ast_1 G_1\boxtimes \calq_2)\right)\\
    &=\underline{\Spec}\left(\Sym_{{\GG_1\times\GG_2}}(\calq_1\boxtimes \calq_2)\right)\qedhere
  \end{align*}
\end{proof} 

\begin{proposition}
  \label{prop:TTprimeEEprime}
  Assume $n_1 \leq n_2$.  Then $T_2' \cong T_1^\svee$. In 
  particular $E_2' \cong E_1'$, and there  is a fully faithful
  embedding $\cald^b(\mod E_1') \into \cald^b(\mod E_2)$ (using
  \eqref{ff}).  If $n_1=n_2$ then the embedding is an equivalence. 
\end{proposition}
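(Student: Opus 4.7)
The plan is to prove the $R$-module isomorphism $T_2'\cong T_1^\svee$ first; the algebra isomorphism $E_2'\cong E_1'$ and the fully faithful embedding then follow formally. Since $T_2'=\bigoplus_{\alpha\in B_1}T_{\alpha,2}$ and $T_1^\svee=\bigoplus_{\alpha\in B_1}T_{\alpha,1}^\svee$, it suffices to produce an isomorphism $T_{\alpha,2}\cong T_{\alpha,1}^\svee$ for each $\alpha\in B_1$. Once this is in hand, both $E_1'$ and $E_2'$ can be identified with $\End_R(T_1^\svee)^\circ$ by the last-paragraph argument in the proof of Proposition~\ref{prop:thmA} (applicable since $\calo_{\calz_i}$ is a direct summand of each of $\calt_1^\svee$ and $\calt_2'$), and the embedding $\cald^b(\mod E_1')\into\cald^b(\mod E_2)$ is obtained by composing this equivalence with \eqref{ff}.

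The core identification takes place on the open locus $H_0\subseteq\Spec R$ of tensors of rank exactly~$l$, whose complement is a smaller determinantal variety of codimension at least~$3$. Over $H_0$ both $q'_1$ and $q'_2$ are isomorphisms, and under these identifications the tautological sequences on $\GG_1$ and $\GG_2$ yield $\calq_1|_{H_0}\cong(\im\phi_0)^\svee$ and $\calq_2|_{H_0}\cong\im\phi_0$, where $\phi_0$ is the restriction of the universal map to $H_0$. It follows that $T_{\alpha,1}|_{H_0}\cong\Wedge^{\alpha'}(\im\phi_0)^\svee$ and $T_{\alpha,2}|_{H_0}\cong\Wedge^{\alpha'}\im\phi_0$ are canonically dual, producing a canonical isomorphism $T_{\alpha,2}|_{H_0}\cong T_{\alpha,1}^\svee|_{H_0}$. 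Equivalently, the universal section of $\calq_1\boxtimes\calq_2$ on the fibered product $\hat\calz$ supplies a natural map $(p''_2)^\ast\calq_2\to(p''_1)^\ast\calq_1^\svee$ that becomes an isomorphism over the corresponding open locus.

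To promote this to a global isomorphism I need both $T_{\alpha,1}^\svee$ and $T_{\alpha,2}$ to be reflexive, for then agreement on $H_0$ forces a global isomorphism by the codim-$\ge 2$ extension principle for reflexive modules. Reflexivity of $T_{\alpha,1}^\svee$ is automatic, being the $R$-dual of a finitely generated module over the normal Cohen-Macaulay ring~$R$. For $T_{\alpha,2}\cong q'_{2\ast}\calt_{\alpha,2}$ (the analog of Corollary~\ref{cor:T-Tbar} transported to the second resolution), the plan is to show directly that it is maximal Cohen-Macaulay by an adaptation of Lemma~\ref{lem:calE-MCM}: Grothendieck duality recasts $\Ext^i_R(T_{\alpha,2},\omega_R)=0$ as the vanishing of $H^i\bigl(\GG_2,(\Wedge^{\alpha'}\calq_2)^\svee\otimes(\Wedge^l\calq_2)^{m-n}\otimes L_\delta\calq_2\bigr)$ for $i>0$ and all~$\delta$. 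The main technical obstacle is the \emph{negative} power $(\Wedge^l\calq_2)^{m-n}$ arising because the second resolution is the one to which Theorem~\ref{thm:A} does not directly apply under our hypothesis $n_1\le n_2$; this can be absorbed by rewriting $\Wedge^a\calq_2\cong\Wedge^l\calq_2\otimes(\Wedge^{l-a}\calq_2)^\svee$ repeatedly so as to bring the desired vanishing within the scope of Proposition~\ref{prop:work}.
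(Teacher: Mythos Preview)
Your outline follows the paper's own proof closely: construct a map on the open locus $H_0$ coming from the tautological pairing on $\hat\calz$, check it is an isomorphism there, and then extend to all of $\Spec R$ using that both sides are reflexive and the complement has codimension $\ge 2$. The paper does exactly this, pushing the map $\Wedge^{\alpha'}u$ down from $\hat\calz$ to obtain $\tau_\alpha\colon T_{\alpha,2}\to T_{\alpha,1}^\svee$.

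The one substantive difference is in how reflexivity of $T_{\alpha,2}$ is established. The paper does \emph{not} attempt to prove $T_{\alpha,2}$ is maximal Cohen--Macaulay; instead it invokes a general geometric fact (\cite[Lemma~4.2.1]{VandenBergh:flops}): since $q'_2$ is a resolution whose exceptional locus has codimension $\ge 2$ in $\calz_2$, the pushforward of any vector bundle along $q'_2$ is reflexive. This dispatches the point in one line.

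Your detour through Cohen--Macaulayness can be made to work, but two points need attention. First, before Grothendieck duality can identify $\Ext^i_R(T_{\alpha,2},\omega_R)$ with $\Ext^i_{\calo_{\calz_2}}(\calt_{\alpha,2},\omega_{\calz_2})$ you must check that $\R^i q'_{2\ast}\calt_{\alpha,2}=0$ for $i>0$; this is a separate (easy) application of Proposition~\ref{prop:work} on $\GG_2$ that you have not mentioned. Second, your ``absorption'' step is mis-stated: rewriting $\Wedge^a\calq_2\cong\Wedge^l\calq_2\otimes(\Wedge^{l-a}\calq_2)^\svee$ on the dual factors only makes the negative power of $\Wedge^l\calq_2$ \emph{worse}. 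The correct observation is more direct: since $\alpha\in B_{l,m-l}$, the conjugate $\alpha'$ has at most $m-l$ parts each $\le l$, so
\[
(\Wedge^{\alpha'}\calq_2)^\svee\otimes(\Wedge^l\calq_2)^{m-n}
=(\Wedge^{\gamma'}\calq_2)^\svee,\qquad
\gamma'=(l^{\,n-m},\alpha'_1,\ldots,\alpha'_{m-l}),
\]
which is $(\Wedge^{\gamma'}\calq_2)^\svee$ for a partition $\gamma\in B_{l,n-l}$. Now Proposition~\ref{prop:work} applies verbatim on $\GG_2$ (with $n$ in the role of~$m$). So the vanishing you need does hold, but not for the reason you sketched; and in any event the paper's one-line reflexivity argument makes all of this computation unnecessary.
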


\begin{proof}
  Since $\hat \calz = \underline{\Spec}
  \left(\Sym_{{\GG_1\times\GG_2}}(\calq_1\boxtimes \calq_2)\right)$,
  we have a canonical map
  \[
  u \colon (p_2'')^* \calq_2 \to (p_1'')^* \calq_1^\svee
  \]
  which is an isomorphism on $\hat\calz_0$.  Apply
  $\Wedge^{\alpha'}(-)$ for a partition $\alpha$ to obtain a map 
  \begin{equation}\label{eq:wedge-u}
  \Wedge^{\alpha'} u \colon r_2^* \calt_{\alpha,2} \to r_1^*
  (\calt_{\alpha,1})^\svee
  \end{equation}
  and push down with $(q_1' r_1)_* = (q_2' r_2)_*$ to get a
  homomorphism of $R$-modules
  \begin{equation}\label{eq:taualpha}
  \tau_\alpha\colon T_{\alpha,2} \to T_{\alpha,1}^\svee
  \end{equation}
  which is an isomorphism on $H_0$.  Letting $\alpha$ run over
  partitions in $B_1$, we find a homomorphism
  \(
  \tau\colon T_2' \to T_1^\svee
  \)
  which is also an isomorphism on $H_0$.  Since the exceptional loci
  for the $q_i'$ in $\calz_i$ have codimension at least $2$, the
  modules $T_1$ and $T_2'$ are reflexive by~\cite[Lemma
  4.2.1]{VandenBergh:flops}.  (In fact we know already that $T_1$ is
  Cohen-Macaulay.)   Hence $\tau \colon T_2' \to T_1^\svee$ is an
  isomorphism.

  In particular $\tau$ induces an isomorphism $\tilde \tau \colon E_1'
  \to E_2'$.
\end{proof}

The birational map $\calz_2 \to \calz_1$ is easily seen to be a
\emph{flip}, and, if $n_1=n_2$, even a \emph{flop}. Our final result
thus verifies, in this special case, a general conjecture of Bondal
and Orlov~\cite{Bondal-Orlov:2002}.

\begin{theorem}
  \label{prop:ffembE1E2}
  Assume $n_1 \leq n_2$.  Then there is a fully faithful embedding
  \[
  \calf \colon \cald^b(\coh \calz_1) \to \cald^b(\coh \calz_2)
  \]
  given by 
  \[
  \calf (\calm) = 
  \calt_2'  \lotimes_{E_1'} \rHom_{\calo_{\calz_1}}(\calt_1^\svee, \calm)
  \]
  where $E_1'= \End_R(\calt_1^\svee)$ acts on $\calt_2'$ via the
  isomorphism $E_1' \cong \End_{\calo_{\calz_2}}(\calt_2')$ of
  Proposition~\ref{prop:TTprimeEEprime}.  If $n_1 = n_2$ then $\calf$
  is an equivalence.
\end{theorem}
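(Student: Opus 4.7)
The plan is to realize $\calf$ as the composition
\[
\cald^b(\coh \calz_1) \xrightarrow{\Phi_1} \cald^b(\mod E_1') \xrightarrow{\tilde e} \cald^b(\mod E_2) \xrightarrow{\Psi_2} \cald^b(\coh \calz_2),
\]
where $\Phi_1$ and $\Psi_2$ are tilting equivalences and $\tilde e$ is the idempotent embedding of \eqref{ff}, transferred to $E_1'$ along the algebra isomorphism $E_1' \cong E_2' = eE_2e$ supplied by Proposition~\ref{prop:TTprimeEEprime}. Since each of the three functors is fully faithful, so is their composition.

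First I would invoke Proposition~\ref{prop:HVdB} for the tilting bundles $\calt_1^\svee$ on $\calz_1$ (tilting by the Grothendieck-duality remark in the paper) and $\calt_2$ on $\calz_2$, to obtain the equivalences $\Phi_1 = \rHom_{\calo_{\calz_1}}(\calt_1^\svee, -)$ and $\Psi_2 = \calt_2 \lotimes_{E_2} -$, with quasi-inverses $\calt_1^\svee \lotimes_{E_1'} -$ and $\rHom_{\calo_{\calz_2}}(\calt_2, -)$ respectively. I would then recall from the discussion around \eqref{ff} that $\tilde e \colon \calm \mapsto E_2 e \lotimes_{E_2'} \calm$ is fully faithful: it has the right adjoint $e\cdot(-) = \rHom_{E_2}(E_2e,-)$, the counit $eE_2e \lotimes_{E_2'}\calm = E_2' \lotimes_{E_2'}\calm \to \calm$ is the identity, and the finite global dimension of $E_2$ (Proposition~\ref{prop:HVdB}) guarantees that everything remains in bounded derived categories.

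The key verification is then the identification of the composition with the formula for $\calf$. Writing $E_2 e$ as an $(E_2,E_1')$-bimodule via $\tilde\tau$, one computes
\begin{align*}
\Psi_2 \circ \tilde e \circ \Phi_1 (\calm)
&= \calt_2 \lotimes_{E_2} \bigl(E_2 e \lotimes_{E_1'} \rHom_{\calo_{\calz_1}}(\calt_1^\svee, \calm)\bigr)\\
&= (\calt_2 \cdot e) \lotimes_{E_1'} \rHom_{\calo_{\calz_1}}(\calt_1^\svee, \calm)\\
&= \calt_2' \lotimes_{E_1'} \rHom_{\calo_{\calz_1}}(\calt_1^\svee, \calm) \;=\; \calf(\calm),
\end{align*}
using the obvious $\calt_2 \cdot e = \calt_2'$ on the direct summand picked out by~$e$. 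This displays $\calf$ as a composition of fully faithful functors, hence itself fully faithful.

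The main obstacle is bookkeeping rather than a substantive difficulty: one must verify that the $E_1'$-module structure on $\calt_2'$ specified in the theorem -- namely, that obtained by transporting the canonical $E_2'$-action via $\tilde\tau \colon E_1' \xrightarrow{\sim} E_2'$ of Proposition~\ref{prop:TTprimeEEprime} -- is genuinely compatible with all the bimodule tensor products appearing above, and in particular with the canonical $E_2$-action on $\calt_2$ restricted to the summand $\calt_2 e = \calt_2'$. Since $\tilde\tau$ was constructed geometrically from the map $\tau$ of \eqref{eq:taualpha}, and since $\tau$ arises from a morphism $\Wedge^{\alpha'}u$ of sheaves on $\hat\calz$ that is manifestly natural in $\alpha$, this compatibility is formal; with it in hand the proof above goes through.
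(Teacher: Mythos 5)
Your proof is correct and follows exactly the same route as the paper's: compose the tilting equivalence $\rHom_{\calo_{\calz_1}}(\calt_1^\svee,-)$, the idempotent embedding $\tilde e$ of \eqref{ff} (transported along $E_1'\cong E_2'$), and the tilting equivalence $\calt_2\lotimes_{E_2}-$. You simply spell out the computation $\calt_2\lotimes_{E_2}E_2e\cong\calt_2'$ that the paper leaves implicit in ``of the form asserted.''
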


\begin{proof}
  Since $\calt_1^\svee$ and $\calt_2$ are tilting on $\calz_1$ and
  $\calz_2$, respectively, we have equivalences 
  \[
  \rHom_{\calo_{\calz_1}}(\calt_1^\svee, -) \colon \cald^b(\coh
  \calz_1) \to \cald^b(\mod E_1')
  \]
  and
  \[
  \calt_2 \lotimes_{E_2}  - \colon \cald^b(\mod E_2) \to \cald^b(\coh \calz_2)\,.
  \]
  Putting these together with the isomorphism $E_1' \cong E_2'$ and
  the fully faithful embedding $\tilde e \colon \cald^b(\mod E_2') \to
  \cald^b(\mod E_2)$, we find the composition
  \[
  \calf\colon 
  \cald^b(\coh \calz_1) 
   \xto{\ \cong \ } 
  \cald^b(\mod E_1') 
   \xto{\ \cong \ }
  \cald^b(\mod E_2')
  \ {\into}\  
  \cald^b(\mod E_2)
   \xto{\ \cong \ }
  \cald^b(\coh \calz_2)\,,
  \]
  of the form asserted.
\end{proof}

\begin{theorem} 
  \label{thm:FM}
  Assume that $n_1\le n_2$. Then the Fourier-Mukai transform $\FM = \R
  {r_2}_* \bL r_1^*$ with kernel $(r_1,r_2)_\ast
  \calo_{\hat\calz}$ defines a fully faithful embedding
  \[
  \FM\colon  \cald^b(\coh \calz_1 ) \into \cald^b(\coh \calz_2) 
  \]
  which is an equivalence if $n_1=n_2$.
  There is a natural isomorphism between $\FM$ and the functor $\calf
  = \calt_2' \lotimes_{E_1'} \rHom_{\calo_{\calz_1}}(\calt_1^\svee,
  -)$ introduced in Proposition~\ref{prop:ffembE1E2}.  In particular
  $\FM$ is fully faithful.
\end{theorem}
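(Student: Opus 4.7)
The plan is to exhibit a natural isomorphism $\calf\cong\FM$ of exact functors $\cald^b(\coh\calz_1)\to\cald^b(\coh\calz_2)$. Since Theorem~\ref{prop:ffembE1E2} already establishes the fully faithfulness of $\calf$, this will yield the fully faithfulness of $\FM$ and provide the natural isomorphism asserted in the theorem.

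First I would construct a natural transformation $\eta\colon\calf\to\FM$ built from the morphism $\Wedge^{\alpha'}u$ of~\eqref{eq:wedge-u}. Summing over $\alpha\in B_1$ gives an $E_1'=E_2'$-linear map $\tilde u\colon r_2^*\calt_2'\to r_1^*\calt_1^\svee$ of bundles on $\hat\calz$. Since $\calt_1^\svee$ is a tilting bundle on $\calz_1$, every object $\calm\in\cald^b(\coh\calz_1)$ admits a canonical presentation $\calm\cong\calt_1^\svee\lotimes_{E_1'}\rHom_{\calo_{\calz_1}}(\calt_1^\svee,\calm)$. Pulling back via $r_1$, tensoring $\tilde u$ over $E_1'$ with $\rHom_{\calo_{\calz_1}}(\calt_1^\svee,\calm)$, and then applying $\R r_{2*}$ together with the projection formula on the $\calo_{\calz_2}$-linear factor $\calt_2'$ yields a natural transformation $\eta_{\calm}\colon\calf(\calm)\to\FM(\calm)$.

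Next, since the full subcategory of $\cald^b(\coh\calz_1)$ on which $\eta$ is an isomorphism is thick and both functors are triangulated, it suffices to check the claim on the tilting generator $\calt_1^\svee$. There $\calf(\calt_1^\svee)=\calt_2'$ is tautological, so the content becomes the identification $\R r_{2*}r_1^*\calt_{\alpha,1}^\svee\cong\calt_{\alpha,2}$ for each $\alpha\in B_1$, compatibly with the $E_1'$-action. Using the description $\hat\calz=\underline{\Spec}_{\GG_1\times\GG_2}(\Sym(\calq_1\boxtimes\calq_2))$ of the preceding lemma, the morphism $r_2$ factors through $\GG_1\times\calz_2$, so $\R r_{2*}$ decomposes into a relative-spectrum pushforward followed by the projection along $\GG_1$. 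Filtering everything by Schur functors via Theorem~\ref{thm:cauchyLR} then reduces the desired isomorphism and the vanishing of higher direct images to cohomology-vanishing statements of exactly the kind handled in Proposition~\ref{prop:work}, where the hypothesis $n_1\leq n_2$ guarantees that the relevant weights are dominant.

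The main obstacle is this second step: establishing $\R r_{2*}r_1^*\calt_{\alpha,1}^\svee\cong\calt_{\alpha,2}$ requires not only the cohomological vanishing but also a careful bookkeeping of the $E_1'$-action so that the resulting isomorphism on $\calt_2'$ matches the action entering the definition of $\calf$ (and hence the isomorphism $E_1'\cong E_2'$ of Proposition~\ref{prop:TTprimeEEprime}). Once $\eta$ is shown to be an isomorphism on the generator, it is an isomorphism everywhere, hence $\FM\cong\calf$ and fully faithfulness of $\FM$ follows.
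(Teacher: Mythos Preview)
Your approach is essentially the paper's: build a natural transformation $\eta\colon\calf\to\FM$ from the map $\tilde u$ coming from~\eqref{eq:wedge-u}, and check it is an isomorphism on the generator $\calt_1^\svee$, which amounts to showing $\R r_{2*}r_1^*\calt_{\alpha,1}^\svee\cong\calt_{\alpha,2}$ for $\alpha\in B_1$.

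One point deserves sharpening. You say that both the vanishing of higher direct images \emph{and} the identification of the degree-zero term reduce to the cohomology-vanishing statements of Proposition~\ref{prop:work}. That proposition only gives the former; it does not compute $H^0$. The paper handles the degree-zero identification differently: the adjoint map $\sigma\colon\calt_{\alpha,2}\to r_{2*}r_1^*(\calt_{\alpha,1})^\svee$ is a map from a reflexive sheaf to a torsion-free sheaf which, over the open set where $r_1,r_2$ are isomorphisms, coincides with $(q_2')^*\tau_\alpha$ from~\eqref{eq:taualpha}; since $\tau_\alpha$ is already known to be an isomorphism (Proposition~\ref{prop:TTprimeEEprime}), so is $\sigma$. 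This reflexivity/codimension argument simultaneously disposes of the $E_1'$-equivariance issue you flag as the main obstacle, since the identification inherits the compatibility with $E_1'\cong E_2'$ directly from $\tau$. Finally, your remark that $n_1\le n_2$ ``guarantees that the relevant weights are dominant'' is not quite where the hypothesis enters: the cohomology vanishing on $\GG_1$ uses only $\alpha\in B_{l,n_1-l}$, exactly the hypothesis of Proposition~\ref{prop:work}; the inequality $n_1\le n_2$ is needed so that $B_1\subseteq B_2$ and hence $\calt_2'$ is a summand of the tilting bundle $\calt_2$.
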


\begin{proof}
  For a partition $\alpha \in B_1$, the map
  $\Wedge^{\alpha'}u \colon r_2^* \calt_{\alpha,2} \to r_1^*
  (\calt_{\alpha,1})^\svee$ constructed in~\eqref{eq:wedge-u} gives by
  adjointness a homomorphism on $\calz_2$
  \[
  \sigma\colon \calt_{\alpha,2} \to \R {r_2}_* r_1^*
  (\calt_{\alpha,1})^\svee\,.
  \]
  We claim that $\sigma$ is an isomorphism.  In particular we must
  show $\R^i {r_2}_* r_1^*(\calt_{\alpha,1})^\svee=0$ for $i > 0$.  To
  this latter end it is sufficient to show that for all $y \in \GG_2$
  and all $i>0$ we have
  \[
  H^i(\GG_1, \Wedge^{\alpha'}\calq_1^\svee \otimes_{\calo_{\GG_1}}
  \Sym_{{\GG_1}}(\calq_1 \otimes(\calq_2)_y)) = 0\,.
  \]
  This follows again from the Cauchy formula together with
  Proposition~\ref{prop:work}.  

  Now we can see that $\sigma \colon \calt_{\alpha,2} \to {r_2}_*
  r_1^* (\calt_{\alpha,1})^\svee$ is an isomorphism.  The source
  is reflexive, the target is torsion-free, and over $\hat \calz_0$
  the map $\sigma$ coincides with $(q_2')^*\tau_\alpha$, where
  $\tau_\alpha \colon T_{\alpha,2} \to T_{\alpha,1}^\svee$ as
  in~\eqref{eq:taualpha}.  Since each $\tau_\alpha$ is an isomorphism,
  so is $\sigma$.

  In particular we obtain an isomorphism $\tilde \sigma
  \colon \calt_2' \to \R {r_2}_* \bL r_1^* \calt_1^\svee$ by summing
  over $\alpha \in B_1$.

  To define the desired natural transformation $\eta \colon \calf \to
  \FM$, we must construct a morphism
  \[
  \eta(\calm) \colon 
  \calt_2' \lotimes_{E_1'}    \rHom_{\calo_{\calz_1}}(\calt_1^\svee,\calm)
  \to 
  \R {r_2}_* r_1^* \calm
  \]
  for every $\calm$ in $\cald^b(\coh \calz_1)$.  The desired map is
  the composition of
  \[
  \begin{tikzcd}[row sep = normal]
    \calt'_2 \lotimes_{E'_1}
    \rHom_{\calo_{\calz_1}}(\calt^\svee_1,\calm) 
    \arrow{d}{\tilde\sigma \otimes_{E'_1}  \R r_{2\ast} \bL r^\ast_1}\\
    \R r_{2\ast}\bL r_1^\ast\calt_1^\svee  \lotimes_{E'_1}
    \rHom_{\calo_{\calz_2}}(\R r_{2\ast} \bL r_1^\ast\calt^\svee_1, \R
    r_{2\ast}\bL r_1^\ast\calm) 
  \end{tikzcd}
  \]
  and the evaluation map from the derived tensor product to $\R
  r_{2\ast}\bL r_1^\ast \calm$.  To show that $\eta$ is an isomorphism,
  it suffices, since $\calt_1^\svee$ generates, to prove that
  $\eta(\calt_1^\svee)$ is an isomorphism.  In this case, we have
  \[
 \calt_2'  \lotimes_{E_1'} 
  \rHom_{\calo_{\calz_1}}(\calt_1^\svee, \calt_1^\svee)
\cong
\calt_2' \lotimes_{E_1'}   E_1' 
\cong
  \calt_2' \cong \R {r_2}_* r_1^* \calt_1^\svee\,,
  \]
  an isomorphism by construction.
\end{proof}

\begin{remark}
  \label{rmk:E1E1prime}
  Though we did not use it, in fact we have $E_1' \cong E_1$.  Indeed,
  for $\alpha = (\alpha_1, \dots, \alpha_l) \in B_i$, define
  \[
  \alpha^! = (n_i-l-\alpha_l, \dots, n_i-l-\alpha_1)\,.
  \]
  Then 
  \[
  \Wedge^{\alpha'} \calq_i^\svee \cong \left(\Wedge^l
    \calq_i\right)^{-(n_i-l)} \otimes_{\calo_{\GG_i}}
  \Wedge^{(\alpha^!)'} \calq_i\,.
  \]
  Thus 
  \[
  (\calt_{\alpha,i})^\svee \cong {p_i'}^*
  \left(\Wedge^l\calq_i\right)^{-(n_i-l)} \otimes_{\calo_{\calz_i}}
  \calt_{\alpha^!,i}
  \]
  and hence
  \[
  \calt_i^\svee \cong {p_i'}^*\left(\Wedge^l \calq\right)^{-(n_i-l)}
  \otimes_{\calo_{\calz_i}} \calt_i\,.
  \]
  It follows that $\End_{\calo_{\calz_i}}(\calt_i^\svee)
  \cong \End_{\calo_{\calz_i}}(\calt_i)$. 
\end{remark}

\section{Presentations of the simples}\label{sect:simples}

Throughout this section we assume that the characteristic of our
ground field is zero.  We give an algorithm, based on Bott's theorem
and the Littlewood-Richardson rule, for determining the $\Ext$-groups
between the simple modules over the non-commutative desingularization.
We work out explicitly the representations appearing in the
$\Ext$-groups of low degree, for later use in the proof of
Theorem~\ref{thm:quiverintro}. The method is a direct generalization
of that used in~\cite{Buchweitz-Leuschke-VandenBergh:2010} for the
case of maximal minors, and was independently established in a more
general form by Weyman and Zhao~\cite{Weyman-Zhao}. It was known to
the authors how to extend our methods to arbitrary minors, but after
seeing~\cite{Weyman-Zhao} we realized we could simplify the part of
the argument involving Bott's theorem.  In particular
Lemma~\ref{lem:WZ} is contained in~\cite[Corollary
3.6]{Weyman-Zhao}. We provide a proof for the convenience of the
reader.

Since we work in characteristic zero, we consider the tilting bundle
$\caln = \bigoplus_\alpha \caln_\alpha =\bigoplus_\alpha {p'}^*
L^\alpha\calq$ (cf.\ Proposition~\ref{prop:image-sections}) on the
desingularization $\calz$ and its endomorphism ring $A
= \End_\caloz(\caln)$. Then $A$ is Morita equivalent to $E
= \End_R(T)$ of Theorem~\ref{thm:desing}.

For $\alpha \in B_{l,m-l}$ let $P_\alpha =
\Hom_\caloz(\caln,\;\caln_\alpha)$ be the projective left $A$-module
corresponding to $\alpha$, and let $S_\alpha$ be its associated simple
module.  As in~\cite{Buchweitz-Leuschke-VandenBergh:2010}, we have the
following identification of $S_\alpha$.

\begin{lemma}
  Let $u \colon \GG \to \calz$ be the zero section of the vector
  bundle $p'\colon \calz \to \GG$.  Then the object in $\cald^b(\coh
  \calz)$ corresponding to the simple module $S_\alpha$ is $u_*
  L^{\alpha'} \calr[|\alpha|]$.
\end{lemma}

\begin{proof}
  By~\cite{Kapranov:1988}, the bundles
  $\left\{L^{\alpha'}\calr[|\alpha|]\right\}_{\alpha \in B_{l,m-l}}$ form a dual
  exceptional collection to the full strong exceptional collection
  $\left\{L^\alpha \calq\right\}_{\alpha\in B_{l,m-l}}$, that is,
  \[
  \Ext_{\calog}^t(L^\alpha \calq,\ L^{\beta'} \calr [|\beta|]) = 
  \begin{cases}
    K & \text{if $t=0$ and $\alpha = \beta$, and}\\
    0 & \text{otherwise.}
  \end{cases}
  \]
  This $\Ext$ group is by adjunction isomorphic to
  $\Ext_\caloz^t({p'}^*L^\alpha \calq,\ u_* L^{\beta'} \calr
  [|\beta|])$.  Since ${p'}^*L^\alpha \calq$ corresponds to the
  projective $P_\alpha$ over $A$, this gives the desired statement.
\end{proof}

To compute the extensions between the simple objects, we use the
following proposition~\cite[Proposition
10.6]{Buchweitz-Leuschke-VandenBergh:2010}.  The proof given in loc.\
cit.\ is over $\PP$, but is equally valid over $\GG$.

\begin{lemma}\label{lem:BLV-10.6}
  Let $\calu, \calv$ be objects in $\cald^b(\coh \GG)$.  Then
  \[
  \Ext_{\caloz}^t (u_*\calu,\ u_* \calv) = 
  \bigoplus_s \Ext_\calog^{t-s}(\Wedge^s (\calq \otimes G) \otimes_\calog
  \calu,\ \calv)\,.\qed
  \]
\end{lemma}

\begin{theorem}\label{thm:simple-exts}
  Let $\alpha, \beta \in B_{l,m-l}$. For the
  simple left $A$-modules $S_\alpha$ and $S_\beta$ we have
  \[
  \Ext_A^t(S_\beta, S_\alpha) = 
  \bigoplus_{\lambda} H^{t-|\lambda|+|\alpha|-|\beta|}(\GG,\ L^\lambda
  \calq^\svee \otimes L^{\alpha'}\calr \otimes L^{\beta'}\calr^\svee  )
  \otimes L^{\lambda'}G^\svee\,,
  \]
  where the sum is over all $\lambda \in B_{l,n}$.
\end{theorem}

Observe that the $\lambda$ appearing in the formula have the same
bound on the number of rows as $\alpha$ and $\beta$, but the
constraint on their widths depends on $G$.

\begin{proof}
  This is a direct calculation using Lemma~\ref{lem:BLV-10.6} and the
  Cauchy decomposition from Proposition~\ref{thm:cauchyLR}:
  \begin{align*}
    \Ext_A^t(S_\beta, S_\alpha) &=
    \Ext_\caloz^t(u_*L^{\beta'}\calr[|\beta|],\ u_*L^{\alpha'}\calr[|\alpha|])\\
    &=
    \Ext_\caloz^{t+|\alpha|-|\beta|}(u_*L^{\beta'}\calr,\ u_*L^{\alpha'}\calr)\\
    &= \bigoplus_s
    \Ext_\calog^{t-s+|\alpha|-|\beta|}(\Wedge^s(\calq\otimes
    G)\otimes_\calog L^{\beta'}\calr,\ L^{\alpha'}\calr)\\
    &= \bigoplus_s H^{t-s+|\alpha|-|\beta|}(\GG,\
    \Wedge^s(\calq\otimes
    G)^\svee \otimes_\calog (L^{\beta'}\calr)^\svee\otimes L^{\alpha'}\calr)\\
    &= \bigoplus_s \bigoplus_{|\lambda|=s}
    H^{t-s+|\alpha|-|\beta|}(\GG,\ L^\lambda\calq ^\svee
    \otimes_\calog L^{\alpha'}\calr \otimes
    L^{\beta'}\calr^\svee)\otimes L^{\lambda'}G^\svee
  \end{align*}
  which is equal to the desired sum since $\rank \calq =l$ and $\rank
  G = n$.
\end{proof}

For any given $t$, computing the cohomology indicated in the theorem
is algorithmic, though a complete combinatorial description of exactly
which representations appear remains open.  We can evaluate the sum
for small values of $t$ using the Littlewood-Richardson rule and
Bott's theorem~\cite{Weyman:book}.  Recall the algorithm of Bott: a
bundle of the form $L^\lambda \calq^\svee \otimes L^\gamma
\calr^\svee$, for dominant weights $\lambda$ and $\gamma$, has at most
one non-vanishing cohomology group, and the index $k$ for which
$H^k(\GG,\ L^\lambda \calq^\svee \otimes L^\gamma \calr^\svee) \neq 0$
is computed by flattening the weight $(\gamma,\delta)$ using the
twisted action of the symmetric group $S_m$.\footnote{Technically we
  must flatten $(\lambda^*,\gamma^*)$, where $\lambda^* = -w_0\lambda$
  and $w_0$ is the long word in $S_m$.  However it is easy to see that
  the result is the same, since passing to the dual Grassmannian replaces
  $(\lambda^*, \gamma^*)$ with $(\gamma, \lambda)$.} This means that the
adjacent transpositions $\sigma_i = (i,i+1)$ act on a weight $\alpha =
(\alpha_1, \dots, \alpha_m)$ by $\sigma_i \cdot \alpha = (\alpha_1,
\dots, \alpha_{i+1}+1, \alpha_i -1, \dots, \alpha_m)$. If there exists
a permutation $\tau$ such that $\tau\cdot (\gamma, \lambda)$ is
dominant (that is, weakly decreasing), then the only non-vanishing
cohomology is
\[
H^{l(\tau)}(\GG,\ L^\lambda \calq^\svee \otimes L^\gamma \calr^\svee) =
L^{\tau\cdot (\gamma,\lambda)} F\,,
\]
where $l(\tau)$ is the length of $\tau$'s expansion in adjacent
transpositions. If there exists no such $\tau$, or equivalently
$\tau\cdot (\gamma, \lambda)= (\gamma, \lambda)$ for some non-trivial
$\tau \in S_m$, then all cohomology of $L^\lambda \calq^\svee \otimes
L^\gamma \calr^\svee$ vanishes.

We can describe the algorithm equivalently by defining the action of
$S_m$ via $\sigma_i \cdot \alpha = \sigma_i (\alpha+\rho)-\rho$, where
$\rho = (m-1, m-2, \dots, 1, 0)$.  If $\alpha+ \rho$ contains a
repetition, there is no cohomology.

Note that in this procedure $\gamma$ and $\lambda$ are not assumed to
have non-negative entries.  We write $\alpha = \alpha_+ + \alpha_-$ for
the decomposition of a weight $\alpha$ into positive and negative
parts, and $|\alpha| = |\alpha_+| + |\alpha_-|$ for the signed area of
$\alpha$.

We need a combinatorial lemma.\footnote{A similar argument
  in~\cite{Weyman-Zhao} allowed us to simplify our original argument
  significantly.}  The $L^\gamma \calr^\svee$ appearing in the
Littlewood-Richardson decomposition of $L^{\alpha'}\calr \otimes
L^{\beta'}\calr^\svee$, for $\alpha,\; \beta \in B_{l,m-l}$, will have
$\gamma_i \geq -l$ for all $i$.
\begin{lemma}\label{lem:WZ}
  Let $\gamma = (\gamma_1, \dots, \gamma_{m-l})$ and $\lambda =
  (\lambda_1, \dots, \lambda_l)$ be dominant weights.  Assume that
  $\gamma_i \geq -l$ for all $i$. If $H^k(\GG,\ L^\lambda \calq^\svee
  \otimes L^\gamma \calr^\svee) \neq 0$ for some $k$, then $-\gamma_-
  \subseteq \lambda'$ and $k \geq - |\gamma_-|$.  In particular, if
  $H^{t-|\lambda|+|\gamma|}(\GG,\ L^\lambda \calq^\svee \otimes
  L^\gamma \calr^\svee) \neq 0$ for some $t$, then $t-|\lambda| \geq
  |\gamma_+|$. 
\end{lemma}

\begin{proof}
  We have to show that the negative part of $\gamma$ is contained in
  the first columns of $\lambda$.  If $\gamma$ has no negative entries
  we are of course done.  Set $s= - \gamma_{m-l} \leq l$ and assume
  $s>0$.  Then $\lambda$ can have at most $l-s$ zero entries, for
  otherwise $(\gamma,\lambda)+\rho = (\gamma_1 + m-1, \dots,
  \gamma_{m-l-1} + l+1, l-s, \lambda_1 + l-1, \dots, \lambda_l)$ would
  have a repetition of $l-s$ and all cohomology would vanish.  The
  result of partially flattening $\gamma_{m-l}$ is therefore
  $(\gamma_1, \dots, \gamma_{m-l-1}, \lambda_1 - 1, \dots,
  \lambda_{s} - 1, 0, \lambda_{s-1}, \dots, \lambda_l)$ and
  $\lambda_{s} - 1\geq 0$.  Since $\gamma_{m-l-1} \geq -s$ we may
  repeat the argument with the weight $(\gamma_1, \dots,
  \gamma_{m-l-1}, \lambda_1 - 1, \dots, \lambda_{s} - 1)$ to see that
  $(\lambda_1 - 1, \dots, \lambda_{s} - 1)$ can have at most
  $s-\gamma_{m-l-1}$ zero entries.  Iterate.  The last sentence is
  clear from $|\gamma| = |\gamma_+| + |\gamma_-|$.
\end{proof}

Recall that we use the notation $\alpha \nearrow \beta$ to indicate that $\beta$
is obtained from $\alpha$ by adding a single box.

\begin{nsit}{Computation of $\Ext^t$ for  $t=0,\ 1,\ 2$.}\label{eg:smallt}
  We apply Bott's algorithm first with $t=0$ to compute
  $\Hom_A(S_\beta,\;S_\alpha)$ as a sanity check.
  Theorem~\ref{thm:simple-exts} asks us to compute
  \[
  \bigoplus_{\lambda \in B_{l,n}} H^{-|\lambda|+|\gamma|}(\GG,\
  L^{\lambda} \calq^\svee \otimes L^\gamma \calr^\svee) \otimes
  L^{\lambda'}G^\svee
  \]
  for all $\gamma$ such that $L^\gamma \calr^\svee$ appears in
  $L^{\alpha'}\calr \otimes L^{\beta'}\calr^\svee$.  By the lemma, if
  this cohomology is non-zero then we must have $-|\lambda| \geq
  |\gamma_+|$, which since $\lambda$ is non-negative forces $\lambda=
  (0, \dots,0)$ and $\gamma_+ = (0,\dots, 0)$. The lemma furthermore
  implies $- \gamma_- \subseteq \lambda'$, so $\gamma$ is also the zero
  partition.  This occurs only when $\alpha=\beta$, and we
  obtain
  \[
  \Hom_A(S_\beta,\ S_\alpha) = 
  \begin{cases}
    K & \text{if $\alpha=\beta$, and}\\
    0 & \text{otherwise,}
  \end{cases}
  \]
  as expected.

  For $t=1$, $1-|\lambda| \geq |\gamma_+|$ implies either $\lambda =
  (0,\dots, 0)$ or $\lambda = (1, \dots, 0)$.  In the first case, we
  find $\gamma_-=0$ and $\gamma_+$ can be either $(0,\dots, 0)$ or
  $(1,\dots, 0)$.  The first choice for $\gamma$ leads to $H^1(\GG,\
  \calog) =0$, and the second to $H^0(\GG,\ \calr^\svee) = F$.  In the
  second case we have $\gamma_+=0$ and $\gamma_- = (0,\dots, 0)$ or
  $(0, \dots, -1)$ since $-\gamma_- \subseteq \lambda'$.  Here the
  first choice gives no cohomology and the second contributes
  $H^1(\GG,\ \calq^\svee \otimes \calr) = K$.

  A direct summand of the form $L^{(1,0,\dots,0)} \calr^\svee$
  appearing in $L^{\alpha'}\calr \otimes L^{\beta'}\calr^\svee$
  implies that $\alpha' \subseteq \beta'$ and $\beta'$ differs from
  $\alpha'$ in exactly one entry, where $\beta'_i = \alpha'_i+1$, so
  $\alpha \nearrow \beta$.  Similarly the appearance of
  $L^{(0,\dots,0,-1)} \calr^\svee$ indicates that $\beta$ is the
  result of removing a box from $\alpha$.  Thus
  \[
  \Ext_A^1(S_\beta,\;S_\alpha) = 
  \begin{cases}
    F & \text{if $\alpha \nearrow \beta$, }\\
    G^\svee & \text{if $\beta \nearrow \alpha$, and }\\
    0 & \text{otherwise.}
  \end{cases}
  \]

  The case $t=2$ requires considering several cases corresponding to
  $|\lambda| = 0,\ 1,\ 2$.  If $\lambda$ is the zero partition, then
  $\gamma_- =0$ and $\gamma_+$ is one of $(0, \dots, 0)$, $(1,\dots,
  0)$, $(2,\dots, 0)$ or $(1,1, 0,\dots,0)$.  These are all already
  dominant, so contribute only $H^0$, so we obtain $H^0(\GG,\ \Sym_2
  \calr^\svee) = \Sym_2 F$ and $H^0(\GG,\ \Wedge^2\calr^\svee) =
  \Wedge^2 F$.  These $\gamma$ correspond to obtaining $\beta'$ by
  adding to $\alpha'$, respectively, two boxes not in the same column
  and two boxes not in the same row.

  In case $\lambda = (1,0,\dots,0)$ then $|\gamma_+|\leq 1$ and
  $-|\gamma_-|\leq 1$. The case $\gamma = (0,\dots,0)$ gives no
  cohomology.  If $\gamma = (0, \dots, 0,-1)$ then one swap gives the
  zero partition so we find a contribution to $H^1$ but none to $H^2$.
  If $\gamma = (1,0,\dots, 0,-1)$ then we obtain $H^1(\GG,\
  \calq^\svee\otimes L^{(1,0,\dots,0,-1)}\calr^\svee) = F$.  These
  $\gamma$ correspond to the $\beta'$ obtained by adding one box to
  $\alpha'$ and removing one box.  Finally, if $\lambda =
  (1,0,\dots,0)$ and $\gamma=(1,0,\dots,0)$ then we again have no
  cohomology, unless $\gamma$ has just one entry, in which case
  $m-l=1$ and we get $H^0(\GG,\ \calq^\svee\otimes \calr^\svee) =
  L^{(1,1,0,\dots,0)}F = \Wedge^2 F$. This arises from $\alpha'
  \nearrow \beta'$.

  Assume $\lambda = (1,1,0, \dots, 0)$. Then $\gamma_+=0$ and the
  possibilities for $\gamma_-$ are $(0, \dots, 0)$, $(0, \dots,
  0,-1)$, and $(0, \dots, 0,-2)$.  The first and second cases lead to
  no cohomology, while the third possibility takes two swaps to give
  the zero partition, so $H^2(\GG,\ \Wedge^2\calq \otimes L^{(0,
    \dots, 0,-2)}\calr^\svee ) = K$. This $\gamma$ appears when
  $\alpha'$ is obtained by adding two boxes to $\beta'$, not in the
  same row.

  Lastly suppose $\lambda = (2,0,\dots, 0)$. Then again $\gamma_+=0$
  and now the possibilities for $\gamma_-$ are $(0, \dots, 0)$, $(0,
  \dots, 0,-1)$, and $(0, \dots, 0, -1,-1)$.  The first case gives no
  cohomology unless $m-l=1=l$, in which case $(0,2)$ flattens in one
  swap to $(1,1)$ and we get a contribution to $H^1$ but none to
  $H^0$.  The second case
  flattens in one step to $(0, \dots,0, 1,0, \dots,0)$, which gives no
  cohomology if $m-l > 1$ and $H^1(\GG,\ \Sym_2\calq \otimes L^{(0,
    \dots, 0,-1)}\calr^\svee) = F$ if $m-l=1$.  This occurs when
  $\alpha' \nearrow \beta'$. The third case flattens to the zero
  partition in two swaps, so gives $H^2(\GG,\ \Sym_2\calq\otimes
  L^{(0, \dots, 0, -1,-1)}\calr^\svee)=K$. This occurs when $\alpha'$
  is obtained by adding two boxes to $\beta'$, not in the same
  column.

  Analyzing the ways in which the $L^\gamma \calr^\svee$ appearing
  above can appear in $L^{\alpha'}\calr \otimes
  L^{\beta'}\calr^\svee$, we arrive at the final results.  If $m-l>1$, then
  $\Ext_A^2(S_\beta,\; S_\alpha)$ is given by
  \begin{equation}\label{eq:Arels}
  \begin{cases}
    \Sym_2 F & \text{if } \alpha \nearrow \nearrow \beta, \text{ two boxes in a column}\\
    \Wedge^2 F & \text{if } \alpha \nearrow \nearrow \beta, \text{ two
      boxes in a
      row} \\
    \Sym_2 F \oplus \Wedge^2 F \cong F \otimes F & \text{if } \alpha
    \nearrow \nearrow \beta, \text{ two
      disconnected boxes} \\
    F\otimes G^\svee & \begin{minipage}{2in} if $\alpha \neq \beta$
      and $\alpha \nearrow
      \delta$,\ $\beta \nearrow \delta$ \\ \phantom{asdf}for some $\delta \in B_{l,m-l}$ \end{minipage}\\
    (F\otimes G^\svee)^{\oplus(t(\alpha)-1)} & \text{if }\alpha =\beta\\
    \Sym_2 G^\svee & \text{if }\beta \nearrow \nearrow \alpha, \text{
      two
      boxes in a column} \\
    \Wedge^2 G^\svee & \text{if }\beta \nearrow \nearrow \alpha,
    \text{ two
      boxes in a row}\\
    \Sym_2 G^\svee \oplus \Wedge^2 G^\svee \cong G^\svee \otimes
    G^\svee & \text{if }\beta \nearrow \nearrow \alpha, \text{ two
      disconnected boxes.}
  \end{cases}
  \end{equation}
  Here $t(\alpha)$ is the number of ways to add a box to $\alpha$
  without passing out of the sides of the box $B_{l,m-l}$.  This is
  the case corresponding to $\gamma = (1,0, \dots, 0, -1)$ and $\alpha=\beta$.

  In the case of maximal minors, where $m-l=1$, some of the cases above
  do not occur and also we have some additional contributions to
  $\Ext^2$.  In that case we find
  \[
  \Ext_A^2(S_\beta,\; S_\alpha) =   
  \begin{cases}
    \Sym_2 F & \text{if }\alpha \nearrow \nearrow \beta, \text{ two boxes in a column}\\
    \Sym_2 G^\svee & \text{if }\beta \nearrow \nearrow \alpha, \text{ two
      boxes in a column}\\
    \Wedge^2F \otimes G^\svee & \text{ if } \alpha \nearrow \beta\\
    F \otimes \Wedge^2 G^\svee & \text{ if } \beta\nearrow \alpha\,.
  \end{cases}
  \]
\end{nsit}

\begin{remark}\label{rem:cubicrels}
  The computation of $\Ext^2(S_\beta,\; S_\alpha)$ when $m-l=1$ appears
  already in~\cite[Example 10.3]{Buchweitz-Leuschke-VandenBergh:2010},
  and the cubic relations between adjacent vertices in the last two
  lines above are reflected in the commutativity relations on the quiverized
  Clifford algebra in loc.\ cit., Remark~7.6.  See 
  Proposition~\ref{prop:Y-quadratic} for an explanation of their
  disappearance when $m-l>1$.
\end{remark}

\section{The Young quiver with Pieri
  relations}\label{sect:Young-Pieri}

We continue to assume $K$ is a field of characteristic zero.

Now we give an explicit description of the non-commutative
desingularization as a path algebra of a certain quiver with
relations. The vertices of the quiver are identified with partitions
$\alpha \in B_{l,m-l}$, or alternatively with the corresponding vector
bundles $\caln_\alpha ={p'}^*L^\alpha \calq$ on $\calz$, or again with
the corresponding MCM $R$-modules $N_\alpha$.  The arrows from
$\alpha$ to $\beta$ will, in accordance with Example~\ref{eg:smallt},
correspond to (a basis of) $F^\svee$ if $\alpha \nearrow \beta$, and
to (a basis of) $G$ if $\beta \nearrow \alpha$.  To define an explicit
action of the arrows on the modules or bundles, however, requires a
bookkeeping device.

Fix a $K$-vector space $V$ of dimension $d$.  For irreducible
(rational) representations $L^\alpha V$ and $L^\beta V$ of $\GL(V)$,
we know that the tensor product $L^\alpha V \otimes L^\beta V$ has a
canonical decomposition into irreducibles $\bigoplus_\gamma
{(L^\gamma V)}^{c^\gamma_{\alpha\beta}}$ with multiplicities
$c^\gamma_{\alpha\beta}$, but in general the decomposition projectors
are defined only up to some choices of bases for the vector spaces
$\Hom_{\GL(V)}(L^\alpha V\otimes L^\beta V,\ L^\gamma V)$.  To avoid
making these choices, we introduce the following notation.

\begin{definition}
  \label{def:LL}
  Let $\alpha_1, \dots, \alpha_r$ and $\beta_1, \dots, \beta_s$ be
  dominant weights for $\GL(V)$, and set
  \[
  \LL_{\alpha_1 \cdots\alpha_r}^{\beta_1\cdots\beta_s} =
  \Hom_{\GL(V)}(L^{\alpha_1}V\otimes \cdots \otimes L^{\alpha_r}V,\
  L^{\beta_1} V \otimes \cdots \otimes L^{\beta_s}V)\,.
  \]
\end{definition}

The spaces $\LL_{\alpha_1 \cdots\alpha_r}^{\beta_1\cdots\beta_s}$
satisfy various easily-verified properties. Denote by $\alpha^*$ the
dominant weight corresponding to the dual representation $(L^\alpha
V)^\svee = \Hom_{\GL(V)}(L^\alpha V,K)$.

\begin{prop}
  \label{prop:LL-props}
  Let $\alpha_1, \dots, \alpha_r, \beta_1, \dots, \beta_s$ be dominant weights and let
  $\sigma \in S_r$. We have canonical (basis-independent) isomorphisms
  \begin{enumerate}[\quad(i)]
  \item\label{item:LL-props-i} $\LL_{\alpha_1 \cdots\alpha_r}^{\beta_1\cdots\beta_s} =
    \LL_{\alpha_{\sigma(1)} \cdots\alpha_{\sigma(r)}}^{\beta_1\cdots\beta_s}$\,;
  \item\label{item:LL-props-ii} $\LL_{\alpha_1 \cdots\alpha_r}^{\beta_1\cdots\beta_s}
    = \LL_{\alpha_1 \cdots\alpha_{r-1}}^{\beta_1\cdots\beta_s\alpha_r^*}$\,;
  \item\label{item:LL-props-iii} $\LL_{\alpha_1
      \cdots\alpha_r}^{\beta_1\cdots\beta_s} = \bigoplus_\gamma
    \LL_{\gamma\alpha_{i+1} \cdots\alpha_r}^{\beta_1\cdots\beta_s}
    \otimes \LL^\gamma_{\alpha_{1}\cdots\alpha_i}$\,;
  \item \label{item:LL-props-iv} $\left(\LL_{\alpha_1\cdots
        \alpha_r}^{\beta_1\cdots\beta_s}\right)^\svee =
      \LL_{\beta_1\cdots\beta_s}^{\alpha_1\cdots \alpha_r}$\,.\qed
  \end{enumerate}
\end{prop}

The cases of (\ref{item:LL-props-iv}) we will use most often are the
identifications
\[
\left(\LL_{1\alpha}^\beta\right)^\svee = \LL_\beta^{1\alpha}
\qquad\text{and}\qquad
\left(\LL_{1^*\alpha}^\beta\right)^\svee = \LL_\beta^{1^*\alpha}\,.
\]
 Here 
\begin{align*}
\LL_{1\alpha}^{\beta}&\cong
\begin{cases}
K &\text{if $\alpha\nearrow \beta$, and }\\
0&\text{otherwise;}
\end{cases}\\
\LL_{1^\ast\alpha}^{\beta}&\cong
\begin{cases}
K &\text{if $\beta\nearrow \alpha$, and }\\
0&\text{otherwise.}
\end{cases}
\end{align*}
In particular, these properties yield a ``categorified Pieri rule''
yielding a canonical decomposition
\[
V \otimes L^\alpha V \cong \bigoplus_{\beta} L^\beta V\otimes
\left(\LL_{1\alpha}^\beta\right)^\svee\,,
\]
and similarly
\[
V^\svee \otimes L^\alpha V \cong \bigoplus_\beta L^\beta V \otimes
\left(\LL_{1^*\alpha}^\beta\right)^\svee\,,
\]
where the sum in each case is over \emph{all} partitions $\beta$. 
More generally, we have a ``categorified Littlewood-Richardson rule''
\[
L^\alpha V \otimes L^\beta V = \bigoplus_{\gamma} L^\gamma V \otimes
\left(\LL^\gamma_{\alpha\beta}\right)^\svee
\]
and the dimensions of the spaces $\LL^\gamma_{\alpha\beta}$ are given
by the usual Littlewood-Richardson numbers $c^\gamma_{\alpha\beta}$.
There is no canonical choice of bases for the spaces
$\LL^{\alpha\beta}_\gamma$, but see Section~\ref{sect:pierisystems}
below.

Now we are ready to define the (truncated) Young quiver and its action
on the tilting bundle. Return to the notation set in the Introduction,
so that $F^\svee$ and $G$ are $K$-vector spaces of ranks $m$ and $n$
respectively, and $l < \min\{m,n\}$.
\begin{definition}
  \label{def:Youngquiver}
  Let $\YY$ be the quiver having vertices labelled by dominant weights
  $\alpha$ for $\GL(l)$, and arrows $\alpha \to \beta$ indexed by
  \[
  \begin{cases}
    \LL_{1\alpha}^\beta \otimes F^\svee & \text{if } \alpha \nearrow
    \beta\,\text{ and}\\
    \LL_{1^*\alpha}^\beta \otimes G & \text{if } \beta \nearrow
    \alpha\,.
  \end{cases}
  \]
  Further let $\YY_{l,m-l}$ be the subquiver of $\YY$ obtained by
  deleting all vertices $\alpha$ having more than $l$ rows or more
  than $m-l$ columns, as well as all the arrows incident to them.
\end{definition}

To define a ring homomorphism from the path algebra $K[\YY_{l,m-l}]$
to the non-commutative desingularization $A = \End_\caloz{\caln}$ we
must define an action of the arrows on the summands $\caln_\alpha =
{p'}^*L^\alpha \calq$.

\begin{prop}
  \label{prop:map}
  There is a ring homomorphism $K[\YY_{l,m-l}] \to \End_{\caloz}(\caln)$.
\end{prop}

\begin{proof}
  As in the proof of Proposition~\ref{prop:image-sections}, let $\psi
  \colon {q'}^*(G \otimes S) \to {q'}^* (F \otimes S)$ be the pullback
  of the generic map of free $S$-modules to $\calz$, and let $(V,
  \theta)$ be a point of $\calz$.  The fiber of the dual $\psi^\svee$
  over $(V,\theta)$ factors as
  \[
  F^\svee \to V^\svee \to G^\svee
  \]
  so we have induced maps of bundles 
  \[
  {p'}^*\pi^* F^\svee \to {p'}^* \calq \to {p'}^*\pi^* G^\svee\,.
  \]
  Tensoring with $\caln_\alpha = {p'}^* L^\alpha \calq$ and an
  appropriate $\LL$ we obtain natural maps
  \begin{equation}\label{eq:action}
    \begin{split}
  \LL_{1\alpha}^{\beta} \otimes {p'}^* \pi^* F^\svee
  \otimes {p'}^* L^\alpha \calq 
  \to 
  \LL_{1\alpha}^{\beta} \otimes {p'}^* \calq
  \otimes {p'}^* L^\alpha \calq 
  \to 
  {p'}^* L^{\beta}\calq
  \\[2ex]
  \LL_{1^*\alpha}^{\beta} \otimes {p'}^* \pi^* G
  \otimes {p'}^* L^\alpha \calq 
  \to 
  \LL_{1^*\alpha}^{\beta} \otimes {p'}^* \calq^\svee
  \otimes {p'}^* L^\alpha \calq 
  \to 
  {p'}^* L^{\beta}\calq
  \end{split}
  \end{equation}
  for all $\beta$ such that $\alpha \nearrow \beta$, respectively
  $\beta \nearrow \alpha$.
  
  Thus $K[\YY]$ acts on $\caln$ and in fact $K[\YY_{l,m-l}]$ acts
  since $\caln$ contains only bundles $\caln_\alpha$ with $\alpha \in
  B_{l,m-l}$. 
\end{proof}

To identify the kernel of the homomorphism of
Proposition~\ref{prop:map}, observe that if $\gamma$ is obtained by
adding two boxes to $\alpha$, we have canonical decompositions into
one-dimensional spaces
\begin{align}
  \label{eq:F1}
  \LL_{11\alpha}^\gamma &= \LL_{[2]\alpha}^\gamma \oplus
  \LL_{[11]\alpha}^\gamma\\
  \label{eq:F2}
  \LL_{11\alpha}^\gamma &= \bigoplus_{\alpha\nearrow \beta \nearrow
    \gamma} \LL_{1\beta}^\gamma \otimes \LL_{1\alpha}^\beta\,.
\end{align}
If the two boxes are added in the same row, resp.\ column, then
$\LL_{[11]\alpha}^\gamma=0$, resp.\ $\LL_{[2]\alpha}^\gamma =0$, and
the sum in~\eqref{eq:F2} has only one summand.  If, however, the two
boxes are in different rows and columns, each of~\eqref{eq:F1}
and~\eqref{eq:F2} provides the two-dimensional space
$\LL_{11\alpha}^\gamma$ with a basis defined up to scalar multiples,
but these bases are not the same, even up to scalars.

Similarly we have the canonical decompositions
\begin{align}
  \label{eq:G1}
  \LL_{1^*1^*\gamma}^\alpha &= \LL_{[2]^*\gamma}^\alpha \otimes
  \LL_{[11]^*\gamma}^\alpha \\
  \label{eq:G2}
  \LL_{1^*1^*\gamma}^\alpha &= \bigoplus_{\alpha\nearrow \beta
    \nearrow \gamma} \LL_{1^*\beta}^\alpha \otimes
  \LL_{1^*\gamma}^\beta\,,
\end{align}
which again define two essentially different bases for
$\LL_{1^*1^*\gamma}^\alpha$.

If $\gamma$ is obtained by moving a box in $\alpha$ from row $i$ to
row $j$, then we have canonical isomorphisms
\begin{align}
  \label{eq:mix1}
  \LL_{11^*\alpha}^\gamma &= \bigoplus_\beta \LL_{1\beta}^\gamma
  \otimes \LL_{1^*\alpha}^\beta \\
  \label{eq:mix2}
  \LL_{1^*1\alpha}^\gamma &= \bigoplus_\beta \LL_{1^*\beta}^\gamma
  \otimes \LL_{1\alpha}^\beta\,.
\end{align}
As long as $i\neq j$, the space $\LL_{11^*\alpha}^\gamma$ is again
one-dimensional and acquires two different basis elements
from the sums~\eqref{eq:mix1} and~\eqref{eq:mix2}, each of which has
only one non-zero summand.

Finally, for each partition $\alpha$ the dimension of the space
$\LL_{11^*\alpha}^\alpha$ is equal to the number of addable boxes in
$\alpha$, or equivalently the number of ways to remove a box from
$\alpha$ and obtain a dominant weight. (We allow the removal of a
``phantom'' box below the lowest row of $\alpha$.)  Again this space
has a canonical decomposition into one-dimensional spaces
\begin{align}
  \label{eq:star1}
  \LL_{11^*\alpha}^\alpha &= \bigoplus_\beta \LL_{1\beta}^\alpha
  \otimes \LL_{1^*\alpha}^\beta\\
  \label{eq:star2}
  \LL_{1^*1\alpha}^\alpha &= \bigoplus_\beta \LL_{1^*\beta}^\alpha
    \otimes \LL_{1\alpha}^\beta\,.
\end{align}

We use these decompositions~\eqref{eq:F1}--\eqref{eq:star2} of the
universal coefficient spaces to define the relations on $\YY_{l,m-l}$.

\begin{definition}
  \label{def:Yrels}
  We impose relations $I$ on the Young quiver $\YY$ generated by the
  following subspaces of $K[\YY]_2$.
  \begin{enumerate}[\quad(i)]
  \item \label{item:reladd2}For $\gamma$ obtained by adding two boxes to $\alpha$, 
    \[
    \ker\left( 
      \bigoplus_{\alpha\nearrow \beta\nearrow \gamma}
      \LL_{1\beta}^\gamma \otimes F^\svee \otimes \LL_{1\alpha}^\beta
      \otimes F^\svee 
      \to     
      \mbox{\ensuremath{
        \begin{array}{c}
          \LL_{[2]\alpha}^\gamma \otimes \Sym_2 F^\svee \\ 
          {}\oplus{} \\ 
          \LL_{[11]\alpha}^\gamma \otimes \Wedge^2 F^\svee
        \end{array}
      }}
    = 
    \LL_{11\alpha}^\gamma \otimes F^\svee \otimes F^\svee
  \right)\,.
    \]
  \item \label{item:reldel2}For $\gamma$ obtained by deleting two
    boxes from $\alpha$,
    \[
    \ker\left( 
      \bigoplus_{\alpha\nearrow \beta\nearrow \gamma}
      \LL_{1^*\beta}^\alpha \otimes G \otimes \LL_{1^*\gamma}^\beta
      \otimes G
      \to     
      \mbox{\ensuremath{
        \begin{array}{c}
          \LL_{[2]^*\gamma}^\alpha \otimes \Sym_2 G \\ 
          {}\oplus{} \\ 
          \LL_{[11]^*\gamma}^\alpha \otimes \Wedge^2 G
        \end{array}
      }} 
    = \LL_{1^*1^*\gamma}^\alpha \otimes G \otimes G 
  \right)\,.
  \]
  \item \label{item:relmovebox}For $\gamma$ obtained by moving a box in $\alpha$ from row $i$
    to row $j\neq i$,
    \[
    \ker \left(
      \mbox{\ensuremath{
          \begin{array}{c}
            \LL_{1\alpha-\epsilon_i}^\gamma \otimes F^\svee \otimes
            \LL_{1^*\alpha}^{\alpha-\epsilon_i} \otimes G \\ 
            {}\oplus{} \\ 
            \LL_{1^*\alpha+\epsilon_j}^\gamma \otimes G \otimes
            \LL_{1\alpha}^{\alpha+\epsilon_j} \otimes F^\svee
          \end{array}
        }} 
      \to 
      \LL_{11^*\alpha}^\gamma \otimes F^\svee \otimes G
      \right)\,.
      \]
    \item \label{item:relstar} For each partition $\alpha$, 
    \[
    \ker \left(
      \mbox{\ensuremath{
          \begin{array}{c}
            \displaystyle\bigoplus_{\alpha\nearrow\beta}\LL_{1^*\beta}^\alpha
            \otimes G \otimes \LL_{1\alpha}^\beta \otimes F^\svee \\ 
            {}\oplus{} \\ 
            \displaystyle\bigoplus_{\beta\nearrow \alpha} \LL_{1\beta}^\alpha
            \otimes F^\svee \otimes \LL_{1^*\alpha}^\beta \otimes G 
          \end{array}
        }} 
      \to 
            \displaystyle\LL_{11^*\alpha}^\alpha \otimes F^\svee
            \otimes G
    \right)\,.
    \]
  \end{enumerate}
  In each case the indicated maps are defined by the canonical
  decompositions~\eqref{eq:F1}--\eqref{eq:star2}, together with the
  natural surjections $F^\svee \otimes F^\svee \to \Sym_2 F^\svee$,
  $F^\svee \otimes F^\svee \to \Wedge^2 F^\svee$, etc.

  We apply these relations to the truncated Young quiver $\YY_{l,m-l}$
  as well, keeping in mind that any path travelling outside
  $B_{l,m-l}$ is zero.
\end{definition}

\begin{prop}
  \label{prop:checkrels}
  The relations listed in Definition~\ref{def:Yrels} act trivially on
  $\caln$, thus induce a ring homomorphism
  $K[\YY_{l,m-l}]/(\text{relations}) \to \End_{\caloz}(\caln)$.
\end{prop}

\begin{proof}
  This amounts to checking in each case that the composition of two
  arrows in the quiver maps to the Hom-space by the obvious
  projection.  For example, in case~(\ref{item:reladd2}) the
  composition of maps $\caln_\alpha \to \caln_\beta \to \caln_\gamma$,
  where $\alpha \nearrow \beta \nearrow \gamma$, is given by the
  pullback of the evaluation
  \[
  \Hom(\calq\otimes L^\beta \calq,\; L^\gamma \calq) \otimes \calq
  \otimes \Hom(\calq \otimes L^\alpha \calq,\; L^\beta \calq )\otimes
  \calq \otimes L^\alpha \calq 
  \to 
  L^\gamma \calq\,.
  \]
  Shuffling the tensor products around and using the fixed splitting
  $\calq \otimes \calq = \Sym_2 \calq \oplus \Wedge^2\calq$, we can
  rewrite this as
  \[
  \Hom((\Sym_2\calq \oplus \Wedge^2\calq) \otimes L^\alpha \calq,
  L^\gamma \calq) \otimes (\Sym_2\calq \oplus \Wedge^2\calq) \otimes
  L^\alpha \calq \to L^\gamma \calq\,,
  \]
  so that the map is nothing but the natural projection.  Similar
  manipulations take care of the other cases.
\end{proof}

To show that the vector spaces of relations defined in
Definition~\ref{def:Yrels}, after restriction to $\YY_{l,m-l}$, have
the dimensions predicted by~\eqref{eq:Arels}, we must verify that
the maps
\begin{gather}
  \label{eq:relres1}
  \bigoplus_{\beta \in B_{l,m-l}} \LL_{1\beta}^\gamma \otimes
  \LL_{1\alpha}^\beta 
  \to 
  \LL_{[2]\alpha}^\gamma \oplus \LL_{[11]\alpha}^\gamma \\
  \label{eq:relres2}
  \bigoplus_{\beta \in B_{l,m-l}} \LL_{1^*\beta}^\alpha \otimes
  \LL_{1^*\gamma}^\beta 
  \to 
  \LL_{[2]^*\gamma}^\alpha \oplus \LL_{[11]^*\gamma}^\alpha \\
  \label{eq:relres3}
  \left(\LL_{1\alpha-\epsilon_i}^{\alpha-\epsilon_i+\epsilon_j} \otimes
    \LL_{1^*\alpha}^{\alpha-\epsilon_i}\right)
  \oplus
  \left(\LL_{1^*\alpha+\epsilon_j}^{\alpha-\epsilon_i+\epsilon_j} \otimes
    \LL_{1\alpha}^{\alpha+\epsilon_j}\right) 
  \to 
  \LL_{11^*\alpha}^{\alpha-\epsilon_i+\epsilon_j}\\
  \label{eq:relres4}
  \bigoplus_{\substack{\alpha\nearrow \beta \\ \beta \in
        B_{l,m-l}}} \LL_{1^*\beta}^\alpha \otimes
    \LL_{1\alpha}^\beta\ 
  \oplus
  \bigoplus_{\substack{ \beta \nearrow\alpha \\ \beta \in
        B_{l,m-l}}} \LL_{1\beta}^\alpha \otimes
    \LL_{1^*\alpha}^\beta 
  \to \LL_{11^*\alpha}^\alpha 
  \,,
\end{gather}
obtained by restricting all $\alpha,\; \beta,\; \gamma,\;
\alpha-\epsilon_i$ and $\alpha+\epsilon_j$ to lie in the box
$B_{l,m-l}$, remain surjective.
Our proof of this fact relies on an explicit computation 
relating two bases for $\LL_{11^*\alpha}^\alpha$.
In order not to disrupt the flow of the argument we postpone this 
computation to the next section.  See 
Corollary~\ref{cor:mix-matrix-nonzero}.

\begin{lemma}
  \label{lem:surjrels}
  Assume $m-l>1$.  The restricted
  maps~\eqref{eq:relres1}-\eqref{eq:relres4} are surjective.  The
  spaces of relations between two vertices $\alpha,\; \gamma \in
  B_{l,m-l}$ of $\YY_{l,m-l}$ are thus given by
  \begin{equation*}
    \begin{cases}
      \Sym_2 F^\svee & \text{if }\alpha \nearrow \nearrow \gamma, \text{ two boxes in a column}\\
      \Wedge^2 F^\svee & \text{if }\alpha \nearrow \nearrow \gamma, \text{ two boxes in a row} \\
      \Sym_2 F^\svee \oplus \Wedge^2 F^\svee \cong F^\svee \otimes
      F^\svee & \text{if }\alpha \nearrow \nearrow \gamma, \text{ two  disconnected boxes} \\
      F^\svee\otimes G & \begin{minipage}{2.5in} if $\alpha \neq
        \gamma$, and
        $\alpha \nearrow \beta$, $\gamma \nearrow \beta$ \\
        \phantom{asdf} for some
        $\beta$ with $\beta_1 \leq m-l$\end{minipage}\\
      (F^\svee\otimes G)^{\oplus (t(\alpha)-1)} & \text{if }\alpha =\gamma\\
      \Sym_2 G & \text{if }\gamma \nearrow \nearrow \alpha, \text{ two
        boxes in a column} \\
      \Wedge^2 G & \text{if }\gamma \nearrow \nearrow \alpha, \text{
        two
        boxes in a row}\\
      \Sym_2 G \oplus \Wedge^2 G \cong G \otimes G & \text{if }\gamma
      \nearrow \nearrow \alpha, \text{ two disconnected boxes}
    \end{cases}
  \end{equation*}
  where $t(\alpha)$ is the number of ways to add a box to $\alpha$
  without making any row longer than $m-l$.
\end{lemma}

\begin{proof}
  The statements about~\eqref{eq:relres1}, \eqref{eq:relres2},
  and~\eqref{eq:relres3} are clear, since if one of the intermediate
  partitions lies outside $B_{l,m-l}$, then so does $\gamma$ and the
  target of the map vanishes.

  Fix $\alpha \in B_{l,m-l}$.  There is exactly one dominant weight
  $\rho \notin B_{l,m-l}$ such that $\rho \nearrow \alpha$, namely the
  result of deleting the phantom box below the lowest row of
  $\alpha$.  Thus the sum 
  \[
  \bigoplus_{\substack{ \beta \nearrow\alpha \\ \beta \in
      B_{l,m-l}}} \LL_{1\beta}^\alpha \otimes
  \LL_{1^*\alpha}^\beta
  \]
  has $r(\alpha)-1$ summands, where $r(\alpha)$ is the total number of
  ways to add a box to $\alpha$.

  There are two cases, depending on whether the first row of $\alpha$
  has maximal length. If $\alpha_1 < m-l$, then there are no $\beta
  \notin B_{l,m-l}$ with $\alpha \nearrow \beta$, so that 
  \[
  \bigoplus_{\substack{\alpha\nearrow \beta \\ \beta \in
      B_{l,m-l}}} \LL_{1^*\beta}^\alpha \otimes
  \LL_{1\alpha}^\beta
  \to \LL_{11^*\alpha}^\alpha
  \]
  is an isomorphism, and~\eqref{eq:relres4} is surjective.  In this
  case we have $t(\alpha)=r(\alpha)$, and the kernel
  of~\eqref{eq:relres4} has dimension $r(\alpha)-1=t(\alpha)-1$.

  If on the other hand $\alpha_1=m-l$, then there is exactly one
  partition $\sigma \notin B_{l,m-l}$ with $\alpha \nearrow \sigma$.
  To show that~\eqref{eq:relres4} is onto, it suffices to see that the
  images of the one-dimensional spaces $\LL_{1^*\sigma}^\alpha \otimes
  \LL_{1\alpha}^\sigma$ and $\LL_{1\rho}^\alpha \otimes
  \LL_{1^*\alpha}^\rho$ do not coincide in $\LL_{11^*\alpha}^\alpha$.
  This follows from Corollary~\ref{cor:mix-matrix-nonzero} below; the
  matrix relating the two Pieri bases for $\LL_{11^*\alpha}^\alpha$
  has no non-zero entries, so no element of one basis is a scalar
  multiple of an element of the other basis.  Now
  $t(\alpha)=r(\alpha)-1$ in this case, so that the kernel
  of~\eqref{eq:relres4} has dimension
  $(r(\alpha)-1)+(r(\alpha)-1)-r(\alpha) = r(\alpha)-2 = t(\alpha)-1$.
\end{proof}

\begin{remark}
  \label{rem:surjfails}
  In the case $m-l=1$, Lemma~\ref{lem:surjrels} fails; there are cubic
  minimal relations in the quiver~\cite[Remark
  7.6]{Buchweitz-Leuschke-VandenBergh:2010}.  See
  Proposition~\ref{prop:Y-quadratic} for another point of view on
  their disappearance when $m-l>1$.
\end{remark}

\begin{theorem}
  \label{thm:quiveriso}
  Assume $m-l>1$.  The homomorphism $K[\YY_{l,m-l}]/(\text{relations})
  \to A = \End_{\caloz}(\caln)$ is an isomorphism. Thus $A$ is
  isomorphic to the bound path algebra of the Young quiver
  $\YY_{l,m-l}$ having vertices $\alpha \in B_{l,m-l}$ and arrows
  $\alpha \to \beta$ indexed by bases for
  \[
  \begin{cases}
    F^\svee & \text{if $\alpha\nearrow \beta$, and}\\
    G & \text{if $\beta \nearrow  \alpha$,}
  \end{cases}
  \]
  with Pieri relations as indicated in Lemma~\ref{lem:surjrels}.
\end{theorem}

\begin{proof}
  The computation of $\Ext_A^{0,1,2}(S_\beta,\;S_\alpha)$ for simple
  $A$-modules $S_\alpha$ and $S_\beta$ in Example~\ref{eg:smallt}
  shows that $A$ is a quotient of $K[\YY_{l,m-l}]$ with relations
  generated by $(\Ext_A^2(S_\gamma,\;S_\alpha)^\svee)_{\alpha,\gamma}$.
  We also have a surjection $K[\YY_{l,m-l}]/(\text{relations}) \to
  A$.  The induced endomorphism $K[\YY_{l,m-l}] \to K[\YY_{l,m-l}]$
  may not be the identity, but the map $K[\YY_{l,m-l}] \to A$ is
  $\GL(F) \times \GL(G)$-equivariant, and there is a unique such map
  up to scaling arrows.  We may therefore rescale to assume that the
  induced endomorphism of $K[\YY_{l,m-l}]$ is the identity.  

  Write $I$ for the ideal of relations.  Take graded pieces of degree
  $2$ to obtain the following commutative diagram of vector spaces.
\begin{center}
\includegraphics{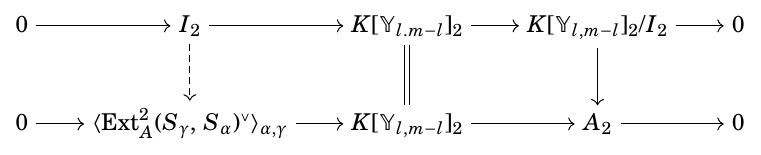}
\end{center}
Now, the dashed arrow is injective, whence an isomorphism since $I_2$
  has the same dimension as $\langle
  \Ext_A^2(S_\gamma,\;S_\alpha)^\svee\rangle_{\alpha,\gamma}$ by
  Example~\ref{eg:smallt}.  It follows that $K[\YY_{l,m-l}] \to A$ is
  an isomorphism.
\end{proof}

\section{Pieri systems}\label{sect:pierisystems}

To extract a really explicit description of the non-commutative
desingularization $A$ from Theorem~\ref{thm:quiveriso}, as well as to
finish the proof of Lemma~\ref{lem:surjrels} and thereby
Theorem~\ref{thm:quiveriso}, we must compute the non-diagonal
surjections $(F^\svee \otimes F^\svee)^{\oplus 2} \onto F^\svee \otimes
F^\svee$, $(G \otimes G)^{\oplus 2} \onto G \otimes G$, and $(F^\svee \otimes
G) \oplus (G \otimes F^\svee) \onto F^\svee \otimes G$ in
Definition~\ref{def:Yrels}.  Equivalently, we must choose bases for
the one-dimensional universal vector spaces appearing in the canonical
decompositions
\begin{equation}\label{eq:decomps}
\begin{tabular}{>{$\displaystyle}r<{$}>{$}c<{$}>{$}c<{$}>{$}c<{$}>{$\displaystyle}l<{$}}
    \LL_{[2]\alpha}^\gamma \oplus\LL_{[11]\alpha}^\gamma
    &=\ 
    &\LL_{11\alpha}^\gamma&
    \ =& \bigoplus_{\alpha\nearrow \beta \nearrow
      \gamma} \LL_{1\beta}^\gamma \otimes \LL_{1\alpha}^\beta\\
    \LL_{[2]^*\gamma}^\alpha \oplus \LL_{[11]^*\gamma}^\alpha
    &=\ 
    &\LL_{1^*1^*\gamma}^\alpha&
    \ =& \bigoplus_{\alpha\nearrow \beta
      \nearrow \gamma} \LL_{1^*\beta}^\alpha \otimes
    \LL_{1^*\gamma}^\beta\\
    \bigoplus_\beta \LL_{1\beta}^\gamma
    \otimes \LL_{1^*\alpha}^\beta
    &=\ 
    &\LL_{1^*1\alpha}^\gamma&
    \ =& \bigoplus_\beta \LL_{1^*\beta}^\gamma \otimes \LL_{1\alpha}^\beta\\
    \bigoplus_\beta \LL_{1\beta}^\alpha \otimes \LL_{1^*\alpha}^\beta
    &=\ 
    &\LL_{1^*1\alpha}^\alpha&
    \ =& \bigoplus_\beta \LL_{1^*\beta}^\alpha \otimes \LL_{1\alpha}^\beta
 \end{tabular}
\end{equation}
where in the first two equations $\gamma$ is obtained by adding two
boxes to $\alpha$ and in the third equation $\alpha$ and
$\gamma$ are related by moving a box from one row to another.

There is no canonical way to make these choices.
P.~Olver~\cite{Olver, Maliakas-Olver:1992} was the first to construct
a coherent set of choices, see also~\cite{Sam-Weyman:2011,
  Ottaviani-Rubei:2006}.  We do not use Olver's intricately
defined maps here, but instead characterize the choices one can
make and show how they determine the scalars in the quiver.

It is more convenient below to work with $\LL_{\beta}^{1\alpha}$
rather than the canonically isomorphic space $\LL_{1^*\beta}^\alpha$.
This replacement gives isomorphic maps to those in
Definition~\ref{def:Yrels}, so makes no difference for the purpose of
identifying the relations.

Throughout this section, $K$ is a field of
characteristic zero and $V$ is a vector space of dimension $d$.  Let
$\epsilon_i$ be the vector $(0, \dots, 0,1,0,\dots, 0)$ with $1$ at
the $i^\text{th}$ position.

Pieri's theorem tells us
\[
V\otimes L^\alpha V\cong \bigoplus_i L^{\alpha+\epsilon_i}V \otimes
\LL_{\alpha+\epsilon_i}^{1\alpha}\,,
\]
where $\LL_{\alpha+\epsilon_i}^{1\alpha}$ is one-dimensional if
$\alpha+\epsilon_i$ is still a partition, and zero otherwise. A
\emph{Pieri system} is a family of non-zero $\GL(V)$-equivariant
linear maps
\[
\chi_{\alpha,i}\colon L^{\alpha+\epsilon_i} V\to V \otimes L^\alpha
V\,.
\]
These maps are unique up to non-zero scalars.  One easily deduces that
for $i<j$ such that $\alpha+\epsilon_i$ and $\alpha+\epsilon_j$ are
partitions one has that
\begin{equation*}
  \Hom_{\GL(V)}(L^{\alpha+\epsilon_i+\epsilon_j}V,\ V \otimes V
  \otimes L^\alpha V)
\end{equation*}
is two-dimensional with basis
\begin{align*}
  \chi_{\alpha,i,j}&=(1\otimes \chi_{\alpha,i})\circ \chi_{\alpha+\epsilon_i,j}\\
  \chi_{\alpha,j,i}&=(1\otimes\chi_{\alpha,j})\circ \chi_{\alpha+\epsilon_j,i}\,.
\end{align*}
\[
\begin{tikzcd}
  {} &  V \otimes L^{\alpha+\epsilon_i}V \arrow{dl}[swap]{1\otimes \chi_{\alpha,i}}\\
V \otimes V \otimes L^\alpha V && L^{\alpha+\epsilon_i+\epsilon_j} \arrow{ul}[swap]{\chi_{\alpha+\epsilon_i,j}} \arrow{dl}{\chi_{\alpha+\epsilon_j,i}} \\
{}  & V \otimes L^{\alpha+\epsilon_j}V \arrow{ul}{1 \otimes \chi_{\alpha,j}}
\end{tikzcd}
\]
Let $\chi^+_{\alpha,i,j}$ and $\chi^-_{\alpha,i,j}$ be obtained by
postcomposing $\chi_{\alpha,i,j}$ respectively with the
symmetrization map $V\otimes V\to \Sym^2 V$ and the
anti-symmetrization map $V\otimes V\to \Wedge^2 V$. By Pieri's theorem
for symmetric and exterior powers we also know that both
$\Hom_{\GL(V)}(L^{\alpha+\epsilon_i+\epsilon_j}V,\ \Sym^2 V\otimes
L^\alpha V)$ and
$\Hom_{\GL(V)}(L^{\alpha+\epsilon_i+\epsilon_j}V,\ \Wedge^2 V \otimes
L^\alpha V)$ are one-dimensional. Furthermore these spaces are clearly
spanned by $\{\chi^+_{\alpha,i,j},\;\chi^+_{\alpha,j,i}\}$ and
$\{\chi^-_{\alpha,i,j},\;\chi^-_{\alpha,j,i}\}$ respectively. This
means we can define scalars (well-defined but not a priori finite or
non-zero at this stage)
\[
\gamma^+_{\alpha,i,j}=\frac{\chi^+_{\alpha,j,i}}{\chi^+_{\alpha,i,j}}\,,
\qquad
\gamma^-_{\alpha,i,j}=\frac{\chi^-_{\alpha,j,i}}{\chi^-_{\alpha,i,j}}\,.
\]
We call $(\gamma^+_{\alpha,i,j})$, $(\gamma^-_{\alpha,i,j})$ the
\emph{(symmetric, exterior) characteristic ratios} of the Pieri system
$(\chi_{\alpha,i})$.

We say that two Pieri systems $\chi,\chi'$ are \emph{equivalent}
(notation: $\chi\sim \chi'$) if there are $(c_\alpha)_\alpha\in
K^\ast$, with $\alpha$ running through the partitions, such that
\[
\chi'_{\alpha,i}=\frac{c_{\alpha+\epsilon_i}}{c_\alpha}\chi_{\alpha,i}\,. 
\]
Clearly two equivalent Pieri systems have the same characteristic ratios.

The following summarizes what we know about Pieri systems.
\begin{proposition}
  \label{ref-2.1-1}
  Let $(\chi_{\alpha,i})_{\alpha,i}$ be a Pieri system with
  characteristic ratios $(\gamma^+_{\alpha,i,j})_{\alpha,i,j}$,
  $(\gamma^-_{\alpha,i,j})_{\alpha,i,j}$.
  \begin{enumerate}[\quad(i)]
  \item \label{item:olver} The characteristic ratios are finite and non-zero.
  \item \label{ref-2-2} We have
    \[
    \frac{\gamma^+_{\alpha,i,j}}{\gamma^-_{\alpha,i,j}}=\frac{u-1}{u+1}
    \]
    where
    \begin{equation}
      \label{ueq}
      u=\frac{1}{ (i-\alpha_i-1) - (j-\alpha_j-1)}\,.
    \end{equation}
    We have written $u$ in this peculiar way to emphasise how it depends on the added
    boxes $(i,\alpha_i+1)$, $(j,\alpha_j+1)$. 
  \item Assume that $\alpha$ is a partition and $i<j<k$ are such that $\alpha+\epsilon_i$,
    $\alpha+\epsilon_j$, $\alpha+\epsilon_k$ are partitions. 
    Then we have 
    \begin{equation}
      \label{ref-2.2-3}
      \begin{aligned}
        \gamma^+_{\alpha+\epsilon_k,ij}\gamma^+_{\alpha,i k}\gamma^+_{\alpha+\epsilon_i,j k}&=
        \gamma^+_{\alpha,j k}\gamma^+_{\alpha+\epsilon_j,i k}\gamma^+_{\alpha,ij}\\
        \gamma^-_{\alpha+\epsilon_k,ij}\gamma^-_{\alpha,i k}\gamma^-_{\alpha+\epsilon_i,j k}&=
        \gamma^-_{\alpha,j k}\gamma^-_{\alpha+\epsilon_j,i k}\gamma^-_{\alpha,ij}\,.
      \end{aligned}
    \end{equation}
  \item 
    \label{ref-4-4}
    Two Pieri systems with the same characteristic ratios are equivalent.
  \item 
    \label{ref-5-5} 
    We can fix either the symmetric or the exterior characteristic ratios
    of a Pieri system arbitrarily provided they satisfy \eqref{ref-2.2-3}.
  \end{enumerate}
\end{proposition}

\begin{remark} 
  \label{remolver} 
  Olver constructs an explicit Pieri system, which we call the
  \emph{classical} system, from the combinatorics of Young
  tableaux. Part (\ref{item:olver}) of the theorem appears in
  \cite[Lemma 8.3]{Olver} and in~\cite[Section
  3]{Maliakas-Olver:1992}, where it is stated for the inverse maps
  $\phi_{\alpha+\epsilon_i,i} \colon V \otimes L^{\alpha} V \to
  L^{\alpha+\epsilon_i} V$ (see Definition~\ref{def:Pieripair} below).
  A detailed proof of the non-vanishing of $\chi_{\alpha,i,j}^+$
  appears in~\cite[Lemma 1.6]{Sam-Weyman:2011}, and their proof is
  easily modified to apply as well to $\chi_{\alpha,i,j}^-$.

  Sam and Weyman also compute~\cite[Cor. 1.8]{Sam-Weyman:2011} the
  scalar multipliers $\gamma^{\pm}_{\alpha,i,j}$ for the classical
  system (though the expression in loc.\ cit.\ for $\gamma^-$ should
  be preceded by a minus sign), and Sam's ``PieriMaps'' package
  implements the calculation of $\chi^+$ in Macaulay2~\cite{Sam:2009}.

  It follows from part (\ref{ref-5-5}) that we may set $\gamma^+=1$ or
  $\gamma^-=1$, but not both.  Indeed, the canonical (basis-free)
  isomorphisms
  \[
  \LL_{\alpha+\epsilon_i + \epsilon_j}^{[2]\alpha} 
  \oplus
  \LL_{\alpha+\epsilon_i+\epsilon_j}^{[11]\alpha} 
  =
  \LL_{\alpha+\epsilon_i+\epsilon_j}^{11\alpha} 
  = 
  (\LL_{\alpha+\epsilon_i+\epsilon_j}^{1\alpha+\epsilon_i} 
  \otimes
  \LL_{\alpha+\epsilon_i}^{1\alpha}) 
  \oplus 
  (\LL_{\alpha+\epsilon_i+\epsilon_j}^{1\alpha+\epsilon_j} 
  \otimes
  \LL_{\alpha+\epsilon_j}^{1\alpha})
  \]
  define four one-dimensional subspaces of the two-dimensional space
  $\LL_{\alpha+\epsilon_i+\epsilon_j}^{11\alpha}$.  Such a
  configuration is essentially classified by a single invariant, the
  cross-ratio, which is independent (up to sign) of all choices. This
  is the origin of the constant in part~(\ref{ref-2-2}) of the theorem.
  In~\cite[\S 8]{Olver}, Olver shows how to renormalize the classical
  system so that $\gamma^+=1$.

  Note also that \eqref{ref-2.2-3} is automatically satisfied if
  $\gamma^{\pm}_{\alpha,i,j}$ depends only on the added boxes
  $(i,\alpha_i+1)$, $(j,\alpha_j+1)$. In other words we may put
  \[
  \gamma^{+}_{\alpha,i,j}=1-u\,,
  \qquad
  \gamma^{-}_{\alpha,i,j}=-(1+u)
  \]
  with $u$ as in \eqref{ueq}. These happen to be the characteristic
  ratios for the classical system.  See Lemma~\ref{ref-5.4-14} and
  Remark~\ref{rem:dontneed}. 
\end{remark}

\subsection*{Schur-Weyl duality}
For $\alpha$ a partition with $|\alpha|=n$, let $H_\alpha$ be the
corresponding irreducible representation of $S_n$.  Consider the
contravariant functor
\( \DD\colon \operatorname{Rep}(S_n)\to \operatorname{Rep}(\GL(V)) \) which sends
$H$ to $\Hom_{S_n}(H, V^{\otimes n})$.  Let $\cals$ be the full
subcategory of $\operatorname{Rep}(S_n)$ spanned by the $H_\alpha$ such that
$\alpha$ has $>d$ parts. Then $\DD$ defines a duality between
$\operatorname{Rep}(S_n)\big/\cals$ and the full subcategory of
$\operatorname{Rep}(\GL(V))$ consisting of polynomial representations.  We
also have
\[
\DD\left(\operatorname{Ind}^{S_{a+b}}_{S_a\times S_b}(H_1\otimes H_2)\right)
=\DD(H_1)\otimes \DD(H_2)
\]
for $H_1$, $H_2$ representations of $S_a$, $S_b$ respectively. 

We may take
\(
L^\alpha V=\DD(H_\alpha)
\).
For partitions $\lambda^1, \dots, \lambda^k,\alpha$, put
\[
{}^\backprime\LL_\alpha^{\lambda^1\cdots\lambda^k}=\Hom_{S_{c_1}\times\cdots \times S_{c_k}}(
H_{\lambda^1}\otimes\cdots\otimes  H_{\lambda^k},\Res^{S_a}_{S_{c_1}\times\cdots
\times S_{c_k}}H_\alpha)
\]
where $|\lambda^i|=c_i$, $|\alpha|=a$. Then
\begin{align*}
  \LL_{\alpha}^{\lambda^1\cdots
    \lambda^k}&=\Hom_{\GL(V)}(\DD(H_\alpha),\; 
  \DD(H_{\lambda^1})\otimes\cdots\otimes  \DD(H_{\lambda^k}))\\
  &=\Hom_{S_{a}}(\Ind^{S_a}_{S_{c_1}\times \cdots \times S_{c_k}}
  (H_{\lambda^1}\otimes \cdots\otimes H_{\lambda^k}),\; H_\alpha)\\
  &=\Hom_{S_{c_1}\times\cdots \times
    S_{c_k}}(H_{\lambda^1}\otimes\cdots \otimes
  H_{\lambda^k},\; \Res^{S_a}_{S_{c_1}\times \cdots \times S_{c_k}}H_\alpha)\\
  &={}^\backprime\LL^{\lambda^1\cdots\lambda^k}_{\alpha}\,.
\end{align*}
We will denote the so obtained canonical isomorphism
${}^\backprime\LL_{\alpha}^{\lambda^1\cdots\lambda^k} \cong
\LL_\alpha^{\lambda^1\cdots\lambda^k}$ also by~$\DD$.  As in
Proposition~\ref{prop:LL-props} we have canonical isomorphisms:
\[
\bigoplus_{\lambda} \LL_{\alpha}^{\beta\lambda} \otimes \LL_{\lambda}^{\delta\epsilon} 
 \to
\LL_{\alpha}^{\beta\delta\epsilon}\,,
\qquad 
\phi_1\otimes \phi_2\mapsto (1\otimes \phi_2)\circ \phi_1\,.
\]
Likewise we have canonical isomorphisms
\[
\bigoplus_{\lambda} {}^\backprime\LL_{\alpha}^{\beta\lambda} \otimes
{}^\backprime\LL_{\lambda}^{\delta\epsilon} 
 \to
{}^\backprime\LL_{\alpha}^{\beta\delta\epsilon}\,,
\qquad 
\theta_1\otimes \theta_2\mapsto (1\otimes \theta_1)  \circ \theta_2\,.
\]
One easily checks that these decompositions are compatible, that is,
\[
\DD((1\otimes \theta_1)  \circ \theta_2)= (1\otimes \DD(\theta_2))
\circ \DD(\theta_1)\,. 
\]
In particular we see that the canonical decomposition
\begin{equation}
\label{ref-3.1-6}
\LL^{1\cdots 1}_{\alpha}=\bigoplus_{\lambda^1,\ldots,\lambda^n=\alpha}
\LL^1_{\lambda^1}\otimes \LL^{1\lambda^1}_{\lambda^2}\otimes \cdots \otimes 
\LL^{1\lambda^{n-1}}_{\lambda^n}
\end{equation}
is the image under $\DD$ of the corresponding canonical decomposition
\begin{equation}
\label{ref-3.2-7}
H_\alpha={}^\backprime\LL^{1\cdots 1}_{\alpha}=\bigoplus_{\lambda^1,\ldots,\lambda^n=\alpha}
{}^\backprime\LL^1_{\lambda^1}\otimes {}^\backprime\LL^{1\lambda^1}_{\lambda^2}\otimes \cdots \otimes 
{}^\backprime\LL^{1\lambda^{n-1}}_{\lambda^n}\,.
\end{equation}
The righthand side of \eqref{ref-3.2-7} is precisely the decomposition into 
one-dimensional subspaces of $H_\alpha$ given by a Young basis. This
observation is due to Jucys \cite{Jucys:1966,Jucys:1971} and is the basis
for the new approach to the representation theory of the
symmetric group in \cite{Okounkov-Vershik:1996} (see equation (1.2) in loc.\ cit.).

Below we follow the setup of \cite{Okounkov-Vershik:1996} but we
formulate the results directly in terms of the decomposition
\eqref{ref-3.1-6}.

\subsection*{The Pieri complex}
The claims (\ref{ref-4-4}) and (\ref{ref-5-5}) in
Proposition~\ref{ref-2.1-1} can be proved directly, but this is
notationally somewhat cumbersome. Therefore we prefer to deduce them
from some topological considerations. This is based on the fact that a
certain cubical complex is contractible.

We define the \emph{Pieri complex} $\PP$ as the cubical set whose
non-degenerate $h$-cubes are given by tuples
$(\alpha,i_1,\ldots,i_h)$ such that $1\le i_1<\dots<i_h\le d$ and
such that $\alpha$ is a partition with at most $d$ parts with the
property that for all $1\le u\le h$ we have that
$\alpha+\epsilon_{i_u}$ is also a partition. Thus the vertices of
$\PP$ are simply the partitions with at most $d$ rows. We say that
$(\alpha',i'_1,\ldots,i'_{h'})$ is a face of
$(\alpha,i_1,\ldots,i_h)$ if either $\alpha'=\alpha$ and
$\{i'_1,\ldots,i'_{h'}\}\subset \{i_1,\ldots,i_h\}$, or 
$\alpha'=\alpha+\epsilon_{i_j}$ for some $j\in \{1,\ldots,u\}$ and
$\{i'_1,\ldots,i'_{h'}\}\subset \{i_1,\ldots,\hat{i}_j,\ldots,i_h\}$.
(The reader should have no difficulty visualizing
$(\alpha,i_1,\ldots,i_h)$ as an $h$-dimensional hypercube; see the
Figure~\ref{fig:Young} for inspiration.) The
following is our basic result about $\PP$.
\begin{proposition} 
\label{ref-4.1-8}
The geometric realization $|\PP|$ of $\PP$ is contractible.
\end{proposition}
\begin{proof} By construction $|\PP|$ is a CW complex.  For $s\ge 0$
  let $\PP_{\le s}\subset \PP$ be the subcomplex of faces that contain
  only vertices $\alpha$ with $|\alpha|\le s$.  We first claim that
  $|\PP_{\le s-1}|$ is a deformation retract of $|\PP_{\le s}|$.

  If $\alpha$ is a vertex in $\PP_{\le s}$ but not in $\PP_{\le s-1}$
  then it belongs to a unique maximal face
  $\square=(\alpha',i_1,\ldots,i_h)$ in $\PP_{\le s}$ and all other
  vertices of $\square$ lie in $\PP_{\le s-1}$.  Thus two different
  such maximal faces intersect each other in $\PP_{\le s-1}$.

  Therefore it is sufficient to retract each such maximal face
  individually to its intersection with $|\PP_{\le s-1}|$.  The
  following picture shows this schematically for a $2$-cube.
  \[
  \includegraphics[height=2cm]{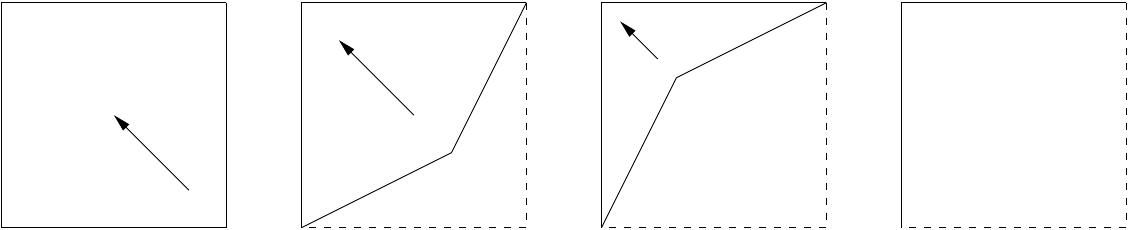}
  \]
  Hence each $|\PP_{\le s}|$ is contractible. So $|\PP|$ is
  contractible as well (see e.g.\
  \cite[Thm. 5.1.35]{Aguilar-Gitler-Prieto:2002}).
\end{proof}

\subsection*{Proof of Proposition~\ref{ref-2.1-1}}
We start by constructing a particular Pieri system using the results
of \cite{Okounkov-Vershik:1996}.  For simplicity we encode a chain of
partitions
\[
\emptyset \nearrow \alpha^1\nearrow \alpha^2\nearrow\cdots\nearrow
\alpha^n=\alpha
\]
by a standard tableau $T$ of shape $\alpha$ where $\alpha^i$ is the
shape of $T_{\le i}$ which is by definition the subtableau of $T$
containing only the letters $1,\ldots,i$.  For a partition~$\alpha$ we
denote by $\diag(\alpha)$ the set of standard tableaux of shape
$\alpha$. The symmetric group $S_n$ partially acts on
$\diag(\alpha)$ by permuting the entries of the tableaux.  If $T\in
\diag(\alpha)$ then we write $\alpha=|T|$. We put
\[
\LL_T=\LL^{1\alpha^1}_{\alpha^2}\otimes \cdots \otimes 
\LL^{1\alpha^{n-1}}_{\alpha^n}
\]
so that \eqref{ref-3.1-6} becomes
\[
\LL^{1\cdots 1}_{\alpha}=\bigoplus_{T\in \diag(\alpha)} \LL_T\,.
\]
Let $T_\alpha$ be the tableau with $1,\ldots,\alpha_i$ in the first
row, $\alpha_i+1,\ldots,\alpha_1+\alpha_2$ in the second row and so
on. We write $T=w_T T_\alpha$ for $w_T\in S_n$. We put $l(T)=l(w_T)$
(see \cite[Remark 6.3]{Okounkov-Vershik:1996}).  If $T\in
\diag(\alpha)$ then a transposition $s=(i,i+1)$ is \emph{admissible}
with respect to $T$ if $i$ and $i+1$ are neither in the same row nor
in the same column.  We say that an admissible transposition is
strongly admissible if it increases $l(T)$.  This happens if and only
if it moves the $i+1$ box upward.

Following \cite{Okounkov-Vershik:1996} we fix a non-zero vector
$v_{T_\alpha}$ in $\LL_{T_\alpha}$ for every partition
$\alpha$. For $T\in \diag(\alpha)$ we define $v_{T}\in \LL_T$ as the
projection of $w_T v_{T_\alpha}\in \LL^{1\cdots 1}_\alpha$ on $\LL_T$.

\begin{proposition} [{see \cite[Prop 5, eq. (7.3)(7.4)]{Okounkov-Vershik:1996}}]
  \label{ref-5.1-9} 
  Let $T\in \diag(\alpha)$ and let $s=(i,i+1)$ be a transposition.
  Then the following hold.
  \begin{enumerate}[\quad(i)]
  \item If $i$ and $i+1$ are in the same row in $T$ then
    \begin{equation*}
      s v_T=v_T\,.
    \end{equation*}
  \item If $i$ and $i+1$ are in the same column in $T$ then
    \begin{equation*}
      s v_T=-v_T\,.
    \end{equation*}
  \item If $s$ is strongly admissible with respect to $T$ and $T'=s T$
    then
    \begin{equation}
      \label{ref-5.3-12}
      \begin{aligned}
        s v_T&=v_{T'}+uv_T\\
        s v_{T'}&=-uv_{T'}+(1-u^2)v_T
      \end{aligned}
    \end{equation}
    with
    \[
    u=\frac{1}{ (k-\alpha_k-1)-(l-\alpha_l-1)}
    \]
  \end{enumerate}
  with $k,l$ being the rows of $i$ and $i+1$ respectively.\qed
\end{proposition}
Note that the case where $s$ is admissible but not strongly admissible
follows by exchanging $T$ and $T'$.

\medskip

The following lemma is a slight extension of
\cite[eq. (7.2)]{Okounkov-Vershik:1996}.
\begin{lemma} \
  \label{ref-5.2-13}
  \begin{enumerate}[\quad(i)]
  \item\label{item:OV1} Let $w\in S_n$ and $T\in \diag(\alpha)$. Then
    \[
    w v_T=\sum_{R\in \diag(\alpha),\;l(R)\le l(T)+l(w)}\gamma_R v_R
    \]
    for some $\gamma_R\in \QQ$.
  \item \label{item:OV2} Assume in addition that $w$ is a product of strongly
    admissible transpositions. Then
    \[
    w v_T=v_{w T}+\sum_{R\in \diag(\alpha),\;l(R)<l(w T)} \gamma_R v_R
    \]
    for some $\gamma_R\in \QQ$.
  \end{enumerate}
\end{lemma}

\begin{proof} 
  Assertion~(\ref{item:OV1}) follows easily from Proposition~\ref{ref-5.1-9} by
  writing $w$ as a composition of transpositions.

  For the second statement, write $w=s w'$ where $s$ is a strongly
  admissible transposition and $w'$ is a product of strongly
  admissible transpositions. By induction we have
  \[
  w' v_T=v_{w' T}+\sum_{R\in \diag(\alpha),\;l(R')<l(w' T)} \gamma'_{R'}
  v_{R'}
  \]
  so that we obtain
  \begin{align*}
    w v_T&=s v_{w' T}+\sum_{R'\in \diag(\alpha),\;l(R')<l(w' T)} \gamma'_{R'} s v_{R'}\\
    &=v_{w T}+uv_{w' T}+\sum_{R'\in \diag(\alpha),\;l(R')<l(w' T)} \gamma'_{R'} s v_{R'}\\
    &=v_{w T}+\sum_{R\in \diag(\alpha),\;l(R)<l(w T)} \gamma_R v_R
  \end{align*}
  where in the second line we have used \eqref{ref-5.3-12} and in the
  third line we have invoked the first part of the lemma.
\end{proof}
Assume now that $T\in \diag(\alpha)$ and that
$\beta=\alpha+\epsilon_i$ is a partition.  Let $T'$ be obtained from
$T$ by adjoining a box labeled $n+1$ at the end of row $i$. Thus
$T'\in \diag(\beta)$.

We now have $v_T\in \LL_T$, $v_{T'}\in \LL_{T'}$. Since
$\LL_{T'}=\LL_{T}\otimes \LL^{1\alpha}_{\beta}$ we may choose
$\chi^c_{T,i}\in \LL^{1\alpha}_{\beta}$ such that $v_{T}\otimes
\chi^c_{T,i}$ and $v_{T'}$ correspond to each other. The following key
result makes everything work.

\begin{lemma} 
  The map $\chi^c_{T,i}$ is independent of the choice of $T\in
  \diag(\alpha)$.
\end{lemma}

\begin{proof}
  If is sufficient to prove that for any $T$ we have
  $\chi^c_{T,i}=\chi^c_{T_\alpha,i}$.  Consider $w_T\in S_n$ as an
  element of $S_{n+1}$.  Let $T'_{\alpha}$ be obtained from
  $T_\alpha$ by adjoining a box labeled $n+1$ at the end of row
  $i$. If we write $w_T\in S_n$ as a product of strongly admissible
  transpositions, then it remains a product of strongly admissible
  transpositions with respect to $T_{\alpha}'$, when considered as an
  element of $S_{n+1}$. Furthermore we have
  $v_{T'}=w_T v_{T'_\alpha}$.

  Let $i\colon \LL_T\to \LL^{1\cdots 1}_\alpha$, $p\colon \LL^{1\cdots
    1}_\alpha \to \LL_T$ be respectively the injection and the
  projection and let $c_{S,S'}\colon \LL_S\to \LL_{S'}$ be the linear
  morphism which sends $v_S$ to $v_{S'}$.  We have the following
  diagram.
\begin{center}
\includegraphics{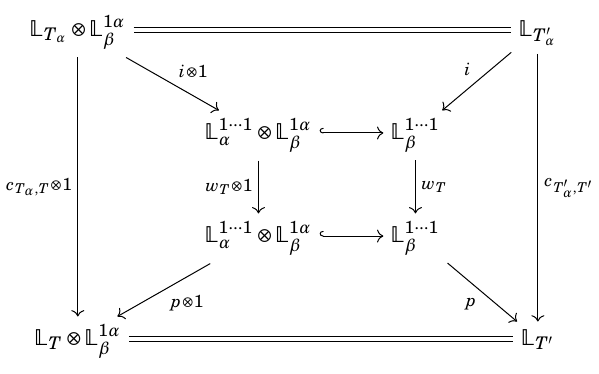}
\end{center}
The commutativity of the leftmost trapezoid is by definition. The
  commutativity of the middle square is clear. The commutativity of
  the rightmost trapezoid follows from Lemma~\ref{ref-5.2-13}(\ref{item:OV2}). The
  commutativity of the upper and lower trapezoid is again by
  construction. From this it is easy to see that the outer square is
  commutative which proves the lemma.
\end{proof}
We now write $\chi_{\alpha,i}^c=\chi^c_{T,i}$ for $T\in \diag(\alpha)$ chosen arbitrarily.
Thus $(\chi^c_{\alpha,i})_{\alpha,i}$ is a particular Pieri system. 

\begin{lemma}
  \label{ref-5.4-14}
  The symmetric and exterior characteristic ratios of
  $(\chi^c_{\alpha,i})_{\alpha,i}$ are respectively given by
  \begin{equation}
    \label{ref-5.4-15}
    \begin{aligned}
      \gamma^{c+}_{\alpha,i,j}&=1-u\\
      \gamma^{c-}_{\alpha,i,j}&=-1-u
    \end{aligned}
  \end{equation}
  with $u$ as in \eqref{ref-5.3-12}.
\end{lemma}

\begin{proof} Assume that $\alpha$, $\alpha+\epsilon_i$,
  $\alpha+\epsilon_j$ are partitions and $i<j$. We have the
  decomposition
  \[
  \LL^{1\alpha+\epsilon_i}_{\alpha+\epsilon_i+\epsilon_j}\otimes
  \LL^{1\alpha}_{\alpha+\epsilon_i} \oplus
  \LL^{1\alpha+\epsilon_j}_{\alpha+\epsilon_i+\epsilon_j}\otimes
  \LL^{1\alpha}_{\alpha+\epsilon_j} \cong
  \LL^{11\alpha}_{\alpha+\epsilon_i+\epsilon_j}\,.
  \]
  To determine the characteristic ratios we have to compose this with
  the canonical maps
  \begin{align*}
    \LL^{11\alpha}_{\alpha+\epsilon_i+\epsilon_j}&\to
    \LL^{[2]\alpha}_{\alpha+\epsilon_i+\epsilon_j}\\
    \LL^{11\alpha}_{\alpha+\epsilon_i+\epsilon_j}&\to
    \LL^{[11]\alpha}_{\alpha+\epsilon_i+\epsilon_j}\,.
  \end{align*}
  After left-multiplying everything with an arbitrary $\LL_T$, $T\in
  \diag(\alpha)$, we may then use the equations \eqref{ref-5.3-12} to
  compute the characteristic ratios, taking into account that $s$ acts
  by $\pm 1$ after projecting to the symmetric, respectively exterior
  square.  It is easy to see that we obtain indeed \eqref{ref-5.4-15}.
\end{proof}

\begin{proof}[Proof of Proposition~\ref{ref-2.1-1}]\
  \begin{enumerate}[\quad(i)]
  \item It is sufficient to prove this for
    $(\chi^c_{\alpha,i})_{\alpha,i}$ where it follows directly from
    Lemma~\ref{ref-5.4-14}.
  \item One easily checks that the ratio $\gamma^+/\gamma^-$ is the
    same for every Pieri system. Again the conclusion follows from
    Lemma~\ref{ref-5.4-14}.
  \item This follows by writing down the $6$ possible maps $L^\alpha
    V\to V\otimes V\otimes V\otimes L^\alpha V$ that can arise as
    compositions of maps in the Pieri system and applying the
    symmetrization $V\otimes V\otimes V \to \Sym^3 V$ and
    anti-symmetrization $V\otimes V\otimes V \to \Wedge^3 V$ to them.
  \item Assume that $\left(\chi^1_{\alpha,i}\right)_{\alpha,i}$,
    $\left(\chi^2_{\alpha,i}\right)_{\alpha,i}$ are Pieri systems with the same
    characteristic ratios. Put
    $\mu_{\alpha,i}=\chi^2_{\alpha,i}\big/\chi^1_{\alpha,i}$. Then
    \begin{equation}
      \label{ref-5.5-16}
      \frac{
        \mu_{\alpha+\epsilon_i,j}\cdot\mu_{\alpha,i}
      }
      {
        \mu_{\alpha,j}\cdot \mu_{\alpha+\epsilon_j,i} 
      }
      =
      1\,.
    \end{equation}
    We have to find $c_\alpha\in K^\ast$ such that
    \begin{equation}
      \label{ref-5.6-17}
      \mu_{\alpha,i}=\frac{c_{\alpha+\epsilon_i}}{c_\alpha}\,.
    \end{equation}
    The condition~\eqref{ref-5.5-16} implies that $\mu$ represents a
    cocycle in the cochain complex $\calc^\bullet(\PP,K^\ast)$. Since
    $\PP$ is contractible by Proposition~\ref{ref-4.1-8}, $\mu$
    must be a coboundary.  This amounts precisely to $\mu$ being writable
    in the form \eqref{ref-5.6-17}.
  \item We will only discuss the symmetric characteristic ratios. The
    exterior characteristic ratios are entirely similar. Assume we
    want to construct a Pieri system
    $\left(\chi_{\alpha,i}\right)_{\alpha,i}$ with prescribed
    $\gamma^+_{\alpha,i,j}$ satisfying \eqref{ref-2.2-3}. Put
    $\delta_{\alpha,i,j}=\gamma^+_{\alpha,i,j}/
    \gamma^{c+}_{\alpha,i,j}$. Then $\delta$ satisfies the equation
    \begin{equation}
      \label{ref-5.7-18}
      \begin{aligned}
        \frac{ \delta_{\alpha+\epsilon_i,j k} \delta_{\alpha,i k}
          \delta_{\alpha+\epsilon_k,ij} }{
          \delta_{\alpha,j k}\delta_{\alpha+\epsilon_j,i k}\delta_{\alpha,ij}}=1\,. 
      \end{aligned}
    \end{equation}
    We put
    $\chi_{\alpha,i}=\mu_{\alpha,i}\chi^c_{\alpha,i}$. It follows
    that $\mu_{\alpha,i}$ must satisfy
    \begin{equation}
      \label{ref-5.8-19}
      \frac{
        \mu_{\alpha+\epsilon_i,j}\cdot\mu_{\alpha,i}
      }
      {
        \mu_{\alpha,j} \cdot \mu_{\alpha+\epsilon_j,i} 
      }
      =
      \delta_{\alpha,i,j}\,.
\end{equation}
The condition \eqref{ref-5.7-18} implies that $\delta$ represents a
cocycle in the cochain complex $\calc^\bullet(\PP,K^\ast)$. Since
$\PP$ is contractible by Proposition~\ref{ref-4.1-8}, $\delta$ must be
a coboundary.  This amounts precisely to $\delta$ being writable in
the form \eqref{ref-5.8-19}.
\end{enumerate}
\end{proof}

\begin{remark} \label{rem:dontneed} If we combine
  Proposition~\ref{ref-2.1-1}(\ref{ref-4-4}), Remark~\ref{remolver},
  and Lemma~\ref{ref-5.4-14} we see that the classical Pieri system
  constructed by Olver is equivalent to
  $(\chi^c_{\alpha,i})_{\alpha,i}$. Recall that the construction of
  $(\chi^c_{\alpha,i})_{\alpha,i}$ depends on the choice of a basis
  element in $\LL_{T_\alpha}$ for each partition. Since we don't need
  it we have not verified which basis element one should take to
  obtain equality rather than equivalence.
\end{remark}

\begin{remark}
  \label{rem:define-gamma}
  We extend the definitions of $\gamma_{\alpha,i,j}^\pm$ to include
  the possibilities $i=j$ or $\alpha_i=\alpha_j$ by
  \[
  \gamma_{\alpha,i,i}^+=1
  \qquad\text{and}\qquad
  \gamma_{\alpha,i,i}^- =0\,,
  \]
  while 
  \[
  \gamma_{\alpha,i,j}^+=0
  \qquad\text{and}\qquad
  \gamma_{\alpha,i,j}^-=1\
  \]  
  if $\alpha_i=\alpha_j$.
\end{remark}

\bigskip

We also require basis vectors for the one-dimensional spaces
$\LL_{1\alpha-\epsilon_i}^{\alpha}$.  

\begin{definition}\label{def:Pieripair}
  A \emph{compatible pair of Pieri systems} consists of two families
  of non-zero equivariant maps
  \[
  \chi_{\alpha,i} \colon  L^{\alpha+\epsilon_i} V \to
  V \otimes L^\alpha V
  \]
  \[
  \phi_{\alpha,i} \colon V \otimes L^{\alpha-\epsilon_i} V\to L^\alpha
  V
  \]
  such that for each $\alpha$ the composition
  \begin{equation}\label{eq:compat}
  L^{\alpha+\epsilon_i} V \xto {\chi_{\alpha,i}}
  V \otimes L^\alpha V \xto{\phi_{\alpha+\epsilon_i,i}} 
  L^{\alpha+\epsilon_i} V 
  \end{equation}
  is the identity on 
  $L^{\alpha+\epsilon_i} V$.
\end{definition}

One can of course make other choices of normalization for the
compatibility condition in Definition~\ref{def:Pieripair}. One natural
choice is to require that~\eqref{eq:compat} is given by multiplication
by the scalar $\dim_K L^\alpha V$.  This complicates the formulas
below only slightly.

The relations among the maps in a dual Pieri system are completely
determined by the compatibility condition~\eqref{eq:compat} and the
relations in Proposition~\ref{ref-2.1-1}.  Let $\alpha$ be a partition
and $i<j$ such that $\alpha+\epsilon_i$ and $\alpha+\epsilon_j$ are
both partitions, so we have the picture below.
\[
\begin{tikzcd}[column sep = small]
  {} &  V \otimes L^{\alpha+\epsilon_i}V\arrow{dr}{\phi_{\alpha+\epsilon_i+\epsilon_j,j}}\\
V \otimes V \otimes L^\alpha V  \arrow{ur}{1\otimes \phi_{\alpha+\epsilon_i,i}} \arrow{dr}[swap]{1 \otimes \phi_{\alpha+\epsilon_j,j}} && L^{\alpha+\epsilon_i+\epsilon_j}   \\
{}  & V \otimes L^{\alpha+\epsilon_j}V \arrow{ur}[swap]{\phi_{\alpha+\epsilon_i+\epsilon_j,i}}
\end{tikzcd}
\]
Set
\begin{align*}
\phi_{\alpha,i,j} 
&= \phi_{\alpha+\epsilon_i +\epsilon_j,i}\circ(1\otimes \phi_{\alpha+\epsilon_j,j})\\
\phi_{\alpha,j,i} 
&= \phi_{\alpha+\epsilon_i +\epsilon_j,j}\circ(1\otimes \phi_{\alpha+\epsilon_i,i})\,.
\end{align*}
Let $\phi_{\alpha,i,j}^+ \in \LL_{[2]\alpha}^{\alpha+\epsilon_i
  +\epsilon_j}$ and $\phi_{\alpha,i,j}^- \in
\LL_{[11]\alpha}^{\alpha+\epsilon_i +\epsilon_j}$ be obtained by
symmetrizing, resp.\ anti-symmetrizing the input, and define
characteristic ratios
\[
    \delta_{\alpha,i,j}^+ =
  \frac{\phi_{\alpha,j,i}^+}{\phi_{\alpha,i,j}^+}\,,
  \qquad
  \delta_{\alpha,i,j}^- =
  \frac{\phi_{\alpha,j,i}^-}{\phi_{\alpha,i,j}^-}\,. 
\]

\begin{prop}
  \label{prop:phi-props}
  Let $(\chi_{\alpha,i})$ and $(\phi_{\alpha,i})$ be a compatible pair
  of Pieri systems and let $(\gamma_{\alpha,i,j}^+)$,
  $(\gamma_{\alpha,i,j}^-)$ be the characteristic ratios for
  $(\chi_{\alpha,i})$.  Then
  \begin{enumerate}[\quad(i)]
  \item The characteristic ratios $\delta_{\alpha,i,j}^\pm$ are finite
    and non-zero.
  \item We have  
    \[ 
    \delta_{\alpha,i,j}^+ =
    -
    \gamma_{\alpha,i,j}^-
    \qquad\text{and}\qquad
    \delta_{\alpha,i,j}^- =
    -
    \gamma_{\alpha,i,j}^+
    \,.
    \]
    In particular
    \[ 
    \frac{\delta_{\alpha,i,j}^+}{\delta_{\alpha,i,j}^-} =
    \frac{\gamma_{\alpha,i,j}^-}{\gamma_{\alpha,i,j}^+} =
    \frac{u+1}{u-1}\,,
    \]
    where $u$ is as in~\eqref{ueq}.
  \end{enumerate}
\end{prop}

Proposition~\ref{prop:phi-props} follows immediately from the next
Lemma, which will also be used in results below.    Observe that
\begin{align*}
  \phi_{\alpha,i,j} \chi_{\alpha,j,i} 
  &=
  \phi_{\alpha+\epsilon_i+\epsilon_j,i}
  (1\otimes \phi_{\alpha+\epsilon_j,j})
  (1\otimes \chi_{\alpha,j})
  \chi_{\alpha+\epsilon_j,i}\\
  &= 1 
\end{align*}
as a map $L^{\alpha+\epsilon_i+\epsilon_j}V \to V \otimes V \otimes
L^{\alpha} V \to L^{\alpha+\epsilon_i+\epsilon_j}V$.  We wish to
compute $\phi_{\alpha,i,j}^\pm \chi_{\alpha,j,i}^\pm$, which amounts
to understanding the effect of inserting the projectors $V \otimes V
\to \Sym_2V \to V \otimes V$ and $V \otimes V \to \Wedge^2 V \to V
\otimes V$.

Note on the other hand that $\phi_{\alpha,j,i} \chi_{\alpha,j,i}=0$.  Indeed,
\[
\phi_{\alpha,j,i} \chi_{\alpha,j,i} =
\phi_{\alpha+\epsilon_i+\epsilon_j,j}(1\otimes
\phi_{\alpha+\epsilon_i,i}) (1\otimes \chi_{\alpha,j})
\chi_{\alpha+\epsilon_j,i}
\]
and the middle two maps $(1\otimes \phi_{\alpha+\epsilon_i,i})
(1\otimes \chi_{\alpha,j})$ comprise
\[
1 \otimes \phi_{\alpha+\epsilon_i,i}\chi_{\alpha,j}\colon V \otimes
L^{\alpha+\epsilon_j}V \to V \otimes V\otimes L^{\alpha}V \to V
\otimes L^{\alpha+\epsilon_i}V\,,
\]
but there are no non-zero maps $L^{\alpha+\epsilon_j}V \to
L^{\alpha+\epsilon_i}V$.

\begin{lemma}\label{lem:portlandia}
 We have
 \begin{gather}
   \label{eq:fred}
   \phi_{\alpha,i,j}^+ \chi_{\alpha,j,i}^+ 
   \quad=\quad 
   \frac{-\gamma^+}{\gamma^- - \gamma^+}
   \quad=\quad 
   \phi_{\alpha,j,i}^- \chi_{\alpha,i,j}^- \\
   \label{eq:carrie}
   \phi_{\alpha,i,j}^- \chi_{\alpha,j,i}^- 
   \quad=\quad 
   \frac{\gamma^-}{\gamma^- - \gamma^+}
   \quad=\quad 
   \phi_{\alpha,j,i}^- \chi_{\alpha,i,j}^+
 \end{gather}
  where $\gamma^\pm = \gamma_{\alpha,i,j}^\pm$.
\end{lemma}

\begin{proof}
  Suppress $\alpha$ from the notation, writing simply $\phi_{ij}$,
  etc. Applying $\Hom(L^{\alpha+\epsilon_i+\epsilon_j} V,-)$ to the
  composition $L^{\alpha+\epsilon_i+\epsilon_j}V \to V \otimes V
  \otimes L^{\alpha} V \to L^{\alpha+\epsilon_i+\epsilon_j}V$, we see
  that computing $\phi_{ij}^\pm \chi_{ji}^\pm$ is the same as finding
  the image of $\chi_{ji}^\pm \in
  \LL_{\alpha+\epsilon_i+\epsilon_j}^{11\alpha}$ under the sequence of
  maps
  \[
  \begin{tikzcd}[column sep = huge]
    \LL_{\alpha+\epsilon_i+\epsilon_j}^{11\alpha} 
    \arrow{r}{(1\otimes \phi_{\alpha+\epsilon_j,j})\circ-}
    & \LL_{\alpha+\epsilon_i+\epsilon_j}^{1,\alpha+\epsilon_j}
    \arrow{r}{\phi_{\alpha+\epsilon_i+\epsilon_j,i}\circ-} 
    & \LL_{\alpha+\epsilon_i+\epsilon_j}^{\alpha+\epsilon_i+\epsilon_j}\,.
  \end{tikzcd}
  \]
  We know that $\chi_{ji} \mapsto 1$ and that $\chi_{ij} \mapsto 0$,
  so we have only to rewrite the basis 
  $\{\chi_{ji}^+, \chi_{ji}^- \}$ in terms of the basis $\{\chi_{ij},
  \chi_{ji}\}$.  We have
  \begin{align*}
    \chi_{ij} &= \chi_{ij}^+ + \chi_{ij}^-
    =\frac{1}{\gamma^+}\chi_{ji}^+ +
    \frac{1}{\gamma^-}\chi_{ji}^- \\
    \chi_{ji} &= \chi_{ji}^+ + \chi_{ji}^-\,.
  \end{align*}
  Inverting the $2\times 2$ matrix $\left[\begin{smallmatrix}
      1/\gamma^+ & 1/\gamma^-\\ 1 & 1 \end{smallmatrix}\right]$
  gives      
  \begin{equation}
  \begin{split}\label{eq:chi-ji-pm}
    \chi_{ji}^+ &=
    \frac{\gamma^+\gamma^-}{\gamma^--\gamma^+}\left(\chi_{ij} -
    \frac{1}{\gamma^-}\chi_{ji}\right)\\
    \chi_{ji}^- &=
    \frac{\gamma^+\gamma^-}{\gamma^--\gamma^+}\left(-\chi_{ij} +
    \frac{1}{\gamma^+}\chi_{ji}\right)\,.
  \end{split}
  \end{equation}
  Composing with $\phi_{ij}$ gives the first equality in each of~\eqref{eq:fred}
  and~\eqref{eq:carrie}.  A similar argument establishes the other;
  alternatively, note that interchanging $i$ and $j$ just amounts to
  replacing $\gamma^\pm$ by $1/\gamma^\pm$.
\end{proof}

\medskip

Given a pair of partitions $\alpha,\; \gamma$ such that $\alpha
+\epsilon_j = \gamma +\epsilon_i$ we obtain two compositions of Pieri
maps
\begin{equation}\label{eq:mixpent}
\begin{tikzcd}[row sep=small, column sep = small]
  {} & V\otimes L^\alpha V \arrow{dl}[swap]{1\otimes\chi_{\lambda,i}} \arrow{ddr}{\phi_{\beta,j}} \\
  V\otimes V \otimes L^\lambda V \arrow{dd}[swap]{\tau\otimes1} \\
  {} & {} & L^\beta V \arrow{ddl}{\chi_{\gamma,i}}\\
  V\otimes V \otimes L^\lambda V \arrow{dr}[swap]{1\otimes \phi_{\gamma,j}} \\
  {} & V\otimes L^\gamma V
\end{tikzcd}
\end{equation}
where $\tau\colon V \otimes V \to V \otimes V$ denotes the swap,
$\beta = \alpha +\epsilon_j = \gamma +\epsilon_i$ and $\lambda =
\alpha -\epsilon_i = \gamma -\epsilon_j$.  The
diagram~\eqref{eq:mixpent} is not commutative; there are non-trivial
quadratic relations on the Young quiver relating the two paths.

There are two cases to consider, according to whether $\alpha =
\gamma$.

Assume first that $\alpha \neq \gamma$. Then
$\LL_{1\alpha}^{1\gamma}$ is one-dimensional,
so we may define another scalar
\begin{equation}\label{eq:def-eta}
m_{\alpha,i,j} = \frac{(1\otimes
  \phi_{\gamma,j})(\tau\otimes1)(1\otimes \chi_{\lambda,i})}
{\chi_{\gamma,i}\phi_{\beta,j}}
\end{equation}
equal to the ratio of the two paths around the diagram above.  

In the other case $\alpha=\gamma$, the space $\LL_{1\alpha}^{1\alpha}$
is no longer one-dimensional, rather, has dimension equal to the
number of ways to add a box to $\alpha$ to obtain a partition. This is
equal to the number of ways to remove a box from $\alpha$ leaving a
dominant weight. Denote this number $r(\alpha)$, let $\Delta_\alpha$
be the set of indices $i$ such that $\alpha+\epsilon_i$ is a
partition, and let $\nabla_\alpha$ be the set of indices $j$ such that
$\alpha-\epsilon_j$ is a dominant weight.

The canonical decompositions
\begin{align*}
  \LL^{1\alpha}_{1\alpha} &= \bigoplus_{i \in \Delta_\alpha}
  \LL_{\alpha+\epsilon_i}^{1\alpha} \otimes \LL_{1\alpha}^{\alpha+\epsilon_i}\\
  &= \bigoplus_{j \in \nabla_\alpha}
  \LL_{11\alpha-\epsilon_j}^{1\alpha} \otimes
  \LL_{1\alpha}^{11\alpha-\epsilon_j}
\end{align*}
equip the $r(\alpha)$-dimensional space $\LL_{1\alpha}^{1\alpha}$ with
two bases, $\left(\chi_{\alpha,i}\phi_{\alpha+\epsilon_i,i}\right)_{i
  \in \Delta_\alpha}$ and $\left((1\otimes\phi_{\alpha,j})(\tau\otimes
  1)(1\otimes \chi_{\alpha-\epsilon_j,j})\right)_{j \in
  \nabla_\alpha}$.  We adopt the convention that the former,
corresponding to adding and then removing boxes, is the ``natural''
basis.  Then for each $j \in \nabla_\alpha$, there are uniquely defined
scalars $c_{\alpha,i,j}$ such that
\begin{equation}\label{eq:cij}
(1\otimes \phi_{\alpha,j})(\tau\otimes 1)(1\otimes
\chi_{\alpha-\epsilon_j,j})
= 
\sum_{i \in \Delta_\alpha} 
c_{\alpha,i,j}
\chi_{\alpha,i}\phi_{\alpha+\epsilon_i,i}\,.
\end{equation}

To compute the scalars $m_{\alpha,i,j}$ and $c_{\alpha,i,j}$, we
need the following lemma.
\begin{lemma}
  \label{lem:swap-compose}
  We have
  \begin{align*}
    \phi_{\alpha,j,i}(\tau \otimes 1)\chi_{\alpha,i,j} 
    &=
    \frac{\gamma^- + \gamma^+}{\gamma^- - \gamma^+}
    \\
    \phi_{\alpha,i,j}(\tau \otimes 1)\chi_{\alpha,i,j} &
    = \frac{-2}{\gamma^- - \gamma^+}
  \end{align*}
  where as before $\gamma^\pm = \gamma_{\alpha,i,j}$.
\end{lemma}

\begin{proof}
  As in the proof of Lemma~\ref{lem:portlandia}, we abbreviate
  $\chi_{\alpha,i,j}$ as $\chi_{ij}$ and so on. Also as in that proof,
  write $\chi_{ij}^\pm$ in terms of $\chi_{ij}$ and $\chi_{ji}$:
  \begin{align*}
    \chi_{ij}^+ 
    &=
    \frac{1}{\gamma^--\gamma^+} \left(\gamma^- \chi_{ij} -
      \chi_{ji}\right)\\
    \chi_{ij}^-
    &=
    \frac{1}{\gamma^--\gamma^+} \left(-\gamma^+ \chi_{ij} +
      \chi_{ji}\right)\,.
  \end{align*}
  Since $\tau$ acts as $+1$ on $\Sym_2 V$ and $-1$ on $\Wedge^2 V$, we have
  \begin{align*}
    (\tau \otimes 1)\chi_{ij}
    &=
    \chi_{ij}^+ - \chi_{ij}^- \\
    &= 
    \frac{1}{\gamma^--\gamma^+} \left(
      \left(\gamma^-+\gamma^+\right) \chi_{ij} 
      -2\chi_{ji}\right)\,,
  \end{align*}
  and the desired formulas follow since  $\phi_{ji}\chi_{ij} =1$ and
  $\phi_{ij}\chi_{ij}=0$. 
\end{proof}

\begin{prop}
  \label{prop:mixed-i-neq-j}
  Let $\alpha,\; \gamma$ be partitions such that $\alpha+\epsilon_j =
  \gamma+\epsilon_i$ for some $i \neq j$.  Set $\beta =
  \alpha+\epsilon_j =\gamma+\epsilon_i$ and $\lambda =
  \alpha-\epsilon_i = \gamma-\epsilon_j$ as in~\eqref{eq:mixpent}.  
  Then
  \[
  m_{\alpha,i,j} = \frac{-2}{\gamma_{\lambda,i,j}^- -
    \gamma_{\lambda,i,j}^+}\,.
  \]
\end{prop}

\begin{proof}
  Apply $\Hom(L^{\beta} V,-)$ to the diagram~\eqref{eq:mixpent} to
  obtain the pentagon below.
  \begin{equation}\label{eq:LLpent}
    \begin{tikzcd}[row sep=tiny, column sep = normal]
      {} & \LL_\beta^{1\alpha} \arrow{dl}[swap]{(1\otimes\chi_{\lambda,i})\circ-} \arrow{ddr}{\phi_{\beta,j}\circ-} \\
      \LL_\beta^{11\lambda}\arrow{dd}[swap]{\tau\otimes1} \\
      {} & {} & \LL_\beta^\beta \arrow{ddl}{\chi_{\gamma,i}\circ-}\\
      \LL_\beta^{11\lambda} \arrow{dr}[swap]{(1\otimes \phi_{\gamma,j})\circ-} \\
      {} & \LL_\beta^{1\gamma}
    \end{tikzcd}
\end{equation} 
At the top of \eqref{eq:LLpent} we have the basis element $\chi_{\alpha,j}
\in \LL_\beta^{1\alpha}$. Following this vector down the
right-hand side of the diagram, we find at the bottom
\[
\chi_{\gamma,i}\phi_{\beta,j} \chi_{\alpha,j} =
 \chi_{\gamma,i} \in
\LL_\beta^{1\gamma}\,.
\]
On the other hand, $\chi_{\alpha,j}$ maps leftward to 
\[
 (1\otimes \chi_{\lambda,i})\chi_{\alpha,j} = \chi_{\lambda,i,j} \in
\LL_\beta^{11\lambda}\,.
\]
By the definition of $m_{\alpha,i,j}$, we have 
\[
(1\otimes \phi_{\gamma,j})(\tau\otimes 1)\chi_{\lambda,i,j} = m_{\alpha,i,j}
\chi_{\gamma,i} \in \LL_\beta^{1\gamma}\,.
 \]
Then composing with $\phi_{\beta,i}$ gives
\begin{align*}
  \phi_{\lambda,i,j}(\tau\otimes 1)\chi_{\lambda,i,j} &=
  \phi_{\beta,i} (1\otimes \phi_{\gamma,j})(\tau\otimes
  1)\chi_{\lambda,i,j} \\
  &= m_{\alpha,i,j} \phi_{\beta,i}\chi_{\gamma,i}\\
  &= m_{\alpha,i,j}\,.
\end{align*}
Now Lemma~\ref{lem:swap-compose} finishes the proof.
\end{proof}

\begin{prop}
  \label{prop:mix-matrix-2}
  Let $i,\; j$ be such that $\alpha+\epsilon_i$ is a partition and
  $\alpha-\epsilon_j$ is a dominant weight.  Then
  \[
  c_{\alpha,i,j} = 
  \frac{\gamma_{\alpha-\epsilon_j,i,j}^+ +
    \gamma_{\alpha-\epsilon_j,i,j}^-}
  {\gamma_{\alpha-\epsilon_j,i,j}^+ -
    \gamma_{\alpha-\epsilon_j,i,j}^-}\,. 
  \]
\end{prop}

\begin{proof}
  Fix $k \in \Delta_\alpha$, and pre-compose the
  equation~\eqref{eq:cij} with $\chi_{\alpha,k}$ while post-composing
  with $\phi_{\alpha+\epsilon_k,k}$.  On the right-hand side, the
  result is
  \[
  \sum_{i \in \Delta_\alpha} 
  c_{ij}\ 
  \phi_{\alpha+\epsilon_k,k}
  \chi_{\alpha,i}\phi_{\alpha+\epsilon_i,i}
  \chi_{\alpha,k}\,.
  \]
  For $i \neq k$, note that $\phi_{\alpha+\epsilon_k,k}
  \chi_{\alpha,i} \colon L^{\alpha+\epsilon_i}V \to V \otimes L^\alpha
  V \to L^{\alpha+\epsilon_k}$ is the zero map.  Hence the entirety of the
  right-hand side is
  \[
  c_{kj}\ 
  \phi_{\alpha+\epsilon_k,k}
  \chi_{\alpha,k}\phi_{\alpha+\epsilon_k,k}
  \chi_{\alpha,k}
  = 
  c_{kj}
  \,.
  \]

  On the other side, we obtain
  \begin{align*}
  \phi_{\alpha+\epsilon_k,k} 
  (1\otimes \phi_{\alpha,j})
  (\tau\otimes 1)
  (1\otimes \chi_{\alpha-\epsilon_j,j})
  \chi_{\alpha,k} 
  &=
  \phi_{\alpha-\epsilon_j,k,j}
  (\tau\otimes 1)
  \chi_{\alpha-\epsilon_j,j,k}\\
  &=
  \frac{\gamma_{\alpha-\epsilon_j,j,k}^- +
    \gamma_{\alpha-\epsilon_j,j,k}^+}
  {\gamma_{\alpha-\epsilon_j,j,k}^- - \gamma_{\alpha-\epsilon_j,j,k}^+}
  \end{align*}
  by Lemma~\ref{lem:swap-compose}. To get the result in terms of
  $\gamma_{\alpha-\epsilon_j,k,j}^\pm$, replace each $\gamma$ appearing
  by its reciprocal.
\end{proof}

\begin{cor}
  \label{cor:mix-matrix-nonzero}
  For any Pieri system, any $\alpha$, and any $i,j$, we have
  $c_{\alpha,i,j} \neq 0$.
\end{cor}

\begin{proof}
  If $\gamma_{\alpha-\epsilon_j,i,j} =
  -\gamma_{\alpha-\epsilon_j,i,j}$ then $1-u=1+u$, so that $u=0$,
  which is impossible by the definition of $u$.
\end{proof}

This finishes the proof of Lemma~\ref{lem:surjrels} and therefore
Theorem~\ref{thm:quiveriso}.\qed 

\begin{remark}
  \label{rem:classical}
  If the given Pieri system $(\chi_{\alpha,i})$ is equivalent to the
  classical system, so that $\gamma_{\alpha,i,j}^+ = 1-u$ and
  $\gamma_{\alpha,i,j}^- = -(1+u)$ with 
  \[
  u = \frac{1}{(i-\alpha_i-1)-(j-\alpha_j-1)}\,,
  \]
  then the other scalars can also be written in terms of $u$:
  \[
  \delta_{\alpha,i,j}^+ = u-1\,;
  \qquad
  \delta_{\alpha,i,j}^-=u+1\,;
  \]
  \[
  m_{\alpha,i,j} = 1\,;
  \]
  and
  \[
  c_{\alpha,i,j} = \frac{u}{u+1}\,.
  \]
\end{remark}

We finish the section by making explicit the relations on the Young
quiver (Definition~\ref{def:Yrels}).

\begin{theorem}
  \label{thm:explicitrels}
  Let $(\chi_{\alpha,i})$, $(\phi_{\alpha,i})$ be a choice of a
  compatible pair of Pieri systems, and let
  $\gamma_{\alpha,i,j}^\pm,\; \delta_{\alpha,i,j}^\pm$ be the
  characteristic ratios for $(\chi_{\alpha,i})$, $(\phi_{\alpha,i})$
  respectively.  Let $\alpha,\; \gamma \in B_{l,m-l}$.  The relations
  on the truncated Young quiver between the vertices labeled $\alpha$
  and $\gamma$ are the kernels of the following linear maps.
  \begin{enumerate}[\quad(i)]
  \item If $\gamma$ is obtained by adding $2$ boxes to $\alpha$ in
    rows $i<j$, the map $(F^\svee \otimes F^\svee)^{\oplus2} \to F^\svee
    \otimes F^\svee$ defined by 
    \begin{align*}
    &(\lambda_1\otimes \lambda_2, \lambda_1'\otimes \lambda_2')\\
    &\mapsto
    \lambda_1\otimes \lambda_2 + \frac12 \left[
      \left(\gamma_{\alpha,i,j}^+ + \gamma_{\alpha,i,j}^-\right)
      \lambda_1'\otimes \lambda_2'
      + \left(\gamma_{\alpha,i,j}^+ - \gamma_{\alpha,i,j}^-\right)
      \lambda_2' \otimes \lambda_1'\right]\,.
    \end{align*}
  \item If $\gamma$ is obtained by removing two boxes from $\alpha$
      in rows $i<j$, the map $(G\otimes G )^{\oplus2} \to G \otimes G$
      defined by 
      \begin{align*}
      &(g_1\otimes g_2, g_1'\otimes g_2')\\
      &\mapsto 
      g_1\otimes g_2 +  \frac12 \left[
      \left(\delta_{\alpha,i,j}^+ + \delta_{\alpha,i,j}^-\right)
      g_1'\otimes g_2'
      + \left(\delta_{\alpha,i,j}^+ - \delta_{\alpha,i,j}^-\right)
      g_2' \otimes g_1'\right]\\
      &=
      g_1\otimes g_2 -  \frac12 \left[
      \left(\gamma_{\alpha,i,j}^+ + \gamma_{\alpha,i,j}^-\right)
      g_1'\otimes g_2'
      + \left(\gamma_{\alpha,i,j}^+ - \gamma_{\alpha,i,j}^-\right)
      g_2' \otimes g_1'\right]\,.
    \end{align*}
  \item If $\gamma$ is obtained by moving a box in $\alpha$ from row
    $i$ to row $j>i$, the map $(F^\svee \otimes G)^{\oplus2} \to F^\svee
    \otimes G$ defined by 
    \begin{align*}
      (\lambda\otimes g, \lambda'\otimes g') 
      &\mapsto 
      \lambda \otimes g + m_{\alpha,i,j} \lambda'\otimes g'\\
      &=
      \lambda \otimes g + \frac{2}{\gamma_{\alpha,i,j}^+ -
        \gamma_{\alpha,i,j}^-}
      \lambda'\otimes g'\,.
    \end{align*}
  \item If $\gamma=\alpha$, the map $(F^\svee \otimes G)^{\oplus(t(\alpha)+r(\alpha)-1)}
    \to (F^\svee \otimes G)^{\oplus(r(\alpha))}$ defined by
    \begin{align*}
      \left(\left(\lambda_i\otimes g_i\right)_{i \in \Delta_\alpha'}, 
        \left(\lambda_j' \otimes g_j'\right)_{j \in
          \nabla_\alpha'}\right)
      &\mapsto 
        \left(
        \lambda_i \otimes g_i + \sum_{j \in \nabla_\alpha'}
        c_{\alpha,i,j}\lambda_j' \otimes g_j'\right)_{i\in
        \Delta_\alpha'}
    \\
    &=
        \left(
        \lambda_i \otimes g_i + \sum_{j \in \nabla_\alpha'}
        \frac{\gamma_{\alpha-\epsilon_j,i,j}^+ +
          \gamma_{\alpha-\epsilon_j,i,j}^-}
        {\gamma_{\alpha-\epsilon_j,i,j}^+ -
          \gamma_{\alpha-\epsilon_j,i,j}^-}
          \lambda_j' \otimes g_j'\right)_{i\in
        \Delta_\alpha'}
    \,,
    \end{align*}
    where $\Delta_\alpha'$ is the set of indices $i$ such that
    $\alpha+\epsilon_i\in B_{l,m-l}$, $\nabla_\alpha'$ is similarly the
    set of indices $j$ with $\alpha-\epsilon_j \in B_{l,m-l}$,
    $t(\alpha)$ is the number of ways to add a box to $\alpha$ without
    making any row longer than $m-l$, and $r(\alpha)$ is the total
    number of ways to add a box to $\alpha$.\qed
  \end{enumerate}
\end{theorem}

\section{The case of $4\times 4$ matrices of rank 2}
\label{sect:G24}

Let us compute the quiver and some of the relations for the first
non-trivial example, $(m,n,l) = (4,4,2)$.  As a matter of notational
convenience we denote the vertices $\caln_\alpha = {p'}^*L^{\alpha}
\calq$ of the quiver by the corresponding Young diagrams.  We live
inside the box $B_{2,2}$, and therefore have the quiver below.
\begin{center}
\includegraphics{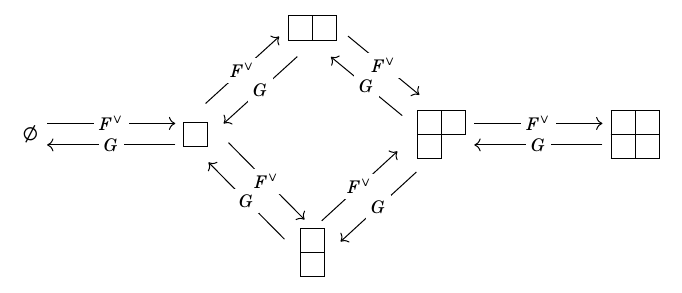}
\end{center}
In this picture, each arrow $F^\svee \colon \alpha \to
\alpha+\epsilon_i$ represents $\LL_{1\alpha}^{\alpha+\epsilon_i}
\otimes F^\svee$, while each $G\colon \alpha+\epsilon_i \to \alpha$
represents $\LL_{1^*\alpha+\epsilon_i}^\alpha\otimes G$.  The action
of the linear maps on the bundles $\caln_\alpha$ is via
the natural maps~\eqref{eq:action}. 

Even more
explicitly, if we fix bases $\{\lambda_1, \dots, \lambda_4\}$ and
$\{g_1, \dots, g_4\}$ for $F^\svee$ and $G$, then each such arrow
stands for four arrows labeled by $\phi_{\alpha+\epsilon_i,i}\otimes
\lambda_k$, respectively $\chi_{\alpha,i} \otimes g_k$, where
$(\chi_{\alpha,i})$ and $(\phi_{\alpha,i})$ is a chosen pair of
compatible Pieri systems.

Let us write down a particular compatible pair of Pieri systems. In
fact it is just as easy to write down a pair of Pieri systems for all
partitions $\alpha=(p,q)$ with at most two rows.  The
corresponding Schur functor $L^{(p,q)}V$ is a quotient of $\left(\Wedge^2
  V\right)^{\otimes q} \otimes \Sym_{p-q}V$, modulo certain
exchange-type relations.  For example, in the case of $\Yaab$, we have 
\[
u \wedge v \otimes w + v \wedge w \otimes u + w \wedge u \otimes v =
0\,.
\]

We denote a general element of $L^{(p,q)}V$ by
\[
\prod_{k=1}^q (u_k \wedge v_k) \otimes \x\,,
\]
where $\x = x_1 \cdots x_{p-q} \in \Sym_{p-q}V$.  Further denote by
$\x_\ihat$ the product $x_1\cdots \hat{x_i}\cdots x_{p-q}$ with $x_i$
deleted. 

Define $ \chi_{(p,q),1}\colon  L^{(p+1,q)}V \to V \otimes L^{(p,q)}V$ by
\begin{multline*}
  \prod_{k=1}^q (u_k \wedge v_k) \otimes \x 
  \mapsto 
  {\sum_{i=1}^{p-q+1} x_i \otimes \prod_{k=1}^q (u_k \wedge
  v_k) \otimes \x_\ihat} \\
  + \frac{1}{p-q+2}\sum_{j=1}^q \left(u_j \otimes x_i \wedge v_j \otimes
  \prod_{k\neq j} (u_k \wedge v_k) \otimes \x_\ihat \right. \\
  + \left. v_j \otimes u_j
  \wedge x_i \otimes \prod_{k\neq j} (u_k \wedge v_k) \otimes
  \x_\ihat \right)  
\end{multline*}
and $\chi_{(p,q),2} \colon  L^{(p,q+1)}V  \to V \otimes L^{(p,q)}V$ by
\begin{equation*}
  \prod_{k=1}^{q+1} (u_k \wedge v_k) \otimes \x 
  \mapsto 
  \sum_{i=1}^{q+1} \left(u_i \otimes \prod_{k\neq i} (u_k \wedge
  v_k) \otimes v_i \x 
  - v_i \otimes \prod_{k\neq i} (u_k \wedge v_k)
  \otimes u_i \x\right)\,.
\end{equation*}

We also define the dual Pieri maps $\phi_{(p+1,q),1}\colon V\otimes
L^{(p,q)}V \to L^{(p+1,q)}$ by
\[
w\otimes\prod_{k=1}^q (u_k \wedge v_k) \otimes \x
\mapsto
\frac{(p-q+2)}{(p+2)(p-q+1)}\prod_{k=1}^q (u_k \wedge v_k)
\otimes  w\x
\]
and $\phi_{(p,q+1),2} \colon  V\otimes L^{(p,q)}V  \to L^{(p,q+1)}V$ by
\[
w\otimes\prod_{k=1}^q (u_k \wedge v_k) \otimes \x 
\mapsto \frac{1}{(q+1)(p-q+1)}\sum_{i=1}^{p-q} w \wedge x_i \otimes
\prod_{k=1}^q (u_k \wedge v_k) \otimes \x_\ihat\,.
\]

It is a soothing combinatorial exercise to prove that each of these
maps is well-defined and that $\phi_{(p,q)+\epsilon_i,i}$ is a left inverse
for $\chi_{\alpha,i}$.

We point out that these are essentially the classical Pieri systems of
Olver, as we shall confirm below (at least up to equivalence) by
computing the characteristic ratios.

For the six partitions of interest, the formulas simplify:

\noindent\begin{tabular}{>{$}r<{$}>{$}l<{$}>{$}l<{$}}
  \chi_{\emptyset,1}\colon &\Ya \to V \otimes  \emptyset\,,
  &  u \mapsto u \otimes 1 \\
  \chi_{\Ya,1} \colon & \Yaa \to V \otimes    \Ya\,,
  & uv \mapsto u \otimes v + v \otimes u\\
  \chi_{\Ya,2}\colon &\Yab \to  V \otimes   \Ya\,, 
  &  u \wedge v \mapsto 
  u \otimes v - v \otimes u\\
  \chi_{\Yab,1}\colon & \Yaab \to  V \otimes   \Yab\,, 
  &  u \wedge v \otimes w 
  \mapsto w \otimes u \wedge v + \frac12 \left(u \otimes w \wedge v +
    v \otimes u \wedge w\right)\\
  \chi_{\Yaa,2}\colon &\Yaab \to  V \otimes   \Yaa\,, 
  &  u \wedge v \otimes w \mapsto 
   u \otimes vw - v \otimes uw\\
  \chi_{\Yaab,2}\colon &\Yaabb \to  V \otimes  \Yaab\,, 
  &  t\wedge u \otimes v \wedge w 
  \mapsto  t \otimes v \wedge w \otimes u - u \otimes v \wedge w
  \otimes t  \\
  & & \hfill +
  v \otimes t \wedge u \otimes w - w \otimes t \wedge u
  \otimes v
\end{tabular}

\noindent\begin{tabular}{>{$}r<{$}>{$}l<{$}>{$}l<{$}}
  \phi_{\Ya,1} \colon &V \otimes\emptyset \to  \Ya\,,
  & u \otimes 1 \mapsto
  u\\
  \phi_{\Yaa,1}\colon &V \otimes  \Ya \to  \Yaa\,, 
  & u \otimes v \mapsto \frac12 uv\\
  \phi_{\Yab,2}\colon &V \otimes \Ya \to  \Yab\,, 
  & u \otimes v \mapsto \frac 12 u \wedge v \\
  \phi_{\Yaab,1}\colon &V \otimes \Yab \to  \Yaab\,, 
  & u \otimes v \wedge w \mapsto \frac 23 v \wedge w \otimes u \\
  \phi_{\Yaab,2} \colon &V \otimes \Yaa \to \Yaab\,,
  & u \otimes v w \mapsto \frac13 \left(u \wedge v \otimes w + u
    \wedge w \otimes v\right)\\
  \phi_{\Yaabb,2} \colon &V \otimes \Yaab \to \Yaabb\,,
  & u \otimes v \wedge w \otimes t \mapsto 
  \frac 14 u \wedge t \otimes v \wedge w
\end{tabular}

\medskip

Let us verify the relations across the central diamond.
\[
\begin{tikzcd}[column sep = small]
{} &  V \otimes \Yaa \arrow{dl}[swap]{1\otimes\chi_{\Ya,1}}\\
V \otimes V \otimes \Ya   & && \Yaab \arrow{ull}[swap]{
  \chi_{\Yaa,2}} \arrow{dll}{\chi_{\Yab,1}} \\
{}  & V \otimes \Yab \arrow{ul}{1 \otimes \chi_{\Ya,2}}
\end{tikzcd}
\]
One computes the characteristic ratios
\[
\gamma_{\Ya,1,2}^+ = \frac{\chi_{\Ya,2,1}^+}{\chi_{\Ya,1,2}^+} = \frac 32
\qquad\text{and}\qquad
\gamma_{\Ya,1,2}^- = \frac{\chi_{\Ya,2,1}^-}{\chi_{\Ya,1,2}^-} = -\frac12\,.
\]
Observe that
\[
\gamma_{\Ya,1,2}^+ = 1-u
\qquad\text{and}\qquad
\gamma_{\Ya,1,2}^- = -1-u\,,
\]
where 
\[
u = \frac{1}{(1-p-1)-(2-q-1)} = 
\frac{-1}{p-q+1} = -\frac12\,,
\]
are the characteristic ratios of Olver's classical Pieri system, cf.\
Lemma~\ref{ref-5.4-14} and Remark~\ref{rem:dontneed}.  One checks
laboriously that the same holds true for all the $(\chi_{(p,q),i})$
defined above.  In particular we verify
\[
\frac{\gamma_{\Ya,1,2}^+}{\gamma_{\Ya,1,2}^-} = \frac{3/2}{-1/2} = - 3
= \frac{u-1}{u+1}\,.
\]


The relation in the reverse direction across the central diamond is
also easy to compute.
\[
\begin{tikzcd}[column sep = small]
{} & V \otimes \Yaa \arrow{drr}{\phi_{\Yaab,2}}\\
V \otimes V \otimes \Ya  \arrow{ur}{1 \otimes \phi_{\Yaa,1}}
\arrow{dr}[swap]{1\otimes \phi_{\Yab,2}} & & &  \Yaab  \\
{} & V \otimes \Yab \arrow{urr}[swap]{\phi_{\Yaab,1}}
\end{tikzcd}
\]
One finds
\[
\delta_{\Ya,1,2}^+ = \frac{\phi_{\Ya,2,1}^+}{\phi_{\Ya,1,2}^+} = 
\frac 12
\]
and 
\[
\delta_{\Ya,1,2}^- = \frac{\phi_{\Ya,2,1}^-}{\phi_{\Ya,1,2}^-} = 
-\frac 32
\]
in accordance with Proposition~\ref{prop:phi-props}.


We can also compute the relation corresponding to moving a box
downward in $\Yaa$ to obtain $\Yab$, finding
\[
m_{\Yaa,1,2} = 
\frac{(1\otimes
  \phi_{\Yab,2})(\tau\otimes1)(1\otimes\chi_{\Ya,1})}
{\chi_{\Yab,1}\phi_{\Yaab,2}}
=
\frac{1/2}{1/2} = 1\,.
\]
Of course this matches Proposition~\ref{prop:mixed-i-neq-j} and
Remark~\ref{rem:classical}:
\[
m_{\Yaa,1,2} = \frac{-2}{\gamma_{\Ya,1,2}^- - \gamma_{\Ya,1,2}^+}
= \frac{-2}{-1/2 - 3/2} = 1\,.
\]


Finally, in order to compute the relation at a single vertex, say
$\alpha = \Ya$, we write all of the $2$-cycles leaving $\alpha$ via
$\LL_{1\alpha}^{11\alpha-\epsilon_j}\otimes G$ (removing a box) in
terms of the basis of $\LL_{1\alpha}^{1\alpha}$ given by those cycles leaving
via $\LL_{1\alpha}^{\alpha+\epsilon_i}$
 (adding a box).
We have $\Delta_\alpha = \{1,2\}$ and
\[
\begin{tikzcd}[row sep = scriptsize]
  {} &  \Yaa \arrow{dl}{\chi_{\Ya,1}}\\
  V \otimes \Ya \arrow[yshift=1ex]{ur}{\phi_{\Yaa,1}}
  \arrow{dr}[yshift=-0.75ex]{\phi_{\Yab,2}} \\
  {} & \Yab  \arrow[yshift=-1ex]{ul}{\chi_{\Ya,2}}
\end{tikzcd}
\]
\begin{align*}
  \chi_{\Ya,1} \phi_{\Yaa,1} \colon \quad 
  & u \otimes v \mapsto \frac12 uv \mapsto \frac12\left(u\otimes v + v \otimes u\right)\\
  \chi_{\Ya,2}  \phi_{\Yab,2} \colon \quad 
  & u \otimes v \mapsto \frac 12 u \wedge v \mapsto \frac 12 \left(u \otimes v
    - v \otimes u\right)
\end{align*}
In the other direction, we have 
\[
(1\otimes \phi_{\Ya,1})(\tau\otimes1)(1\otimes
\chi_{\emptyset,1})\colon \quad u\otimes v \mapsto u \otimes v \otimes
1 \mapsto v \otimes u \otimes 1 \mapsto v \otimes u\,.
\]
Thus 
\[
(1\otimes \phi_{\Ya,1})(\tau\otimes1)(1\otimes
\chi_{\emptyset,1})
=
 \left(\chi_{\Ya,1} \phi_{\Yaa,1}\right) 
- \left(\chi_{\Ya,2} \circ \phi_{\Yab,2}\right)
\]
and
\[
c_{\Ya,1,1} = 1
\quad\text{while}\quad
c_{\Ya,2,1} = -1\,.
\]
This is a somewhat trivial example, coming down to
$\gamma_{\emptyset,1,1}^+=1$, $\gamma_{\emptyset,1,1}^-=0$, 
 $\gamma_{\emptyset,1,2}^+=0$, and $\gamma_{\emptyset,1,2}^-=1$.

\bigskip

The action of the quiver on the bundles $\caln_\alpha$ is defined in
terms of the adjoints $\chi_{\alpha,i}^\#\colon V^\svee \otimes
L^{\alpha+\epsilon_i} V \to L^\alpha V$ of the Pieri maps
$\chi_{\alpha,i}\colon L^{\alpha+\epsilon_i}V \to V \otimes L^\alpha
V$ defined above.  We denote the trace pairing $\Tr\colon V^\svee
\otimes V \to K$ by $\lambda \otimes v \mapsto \lambda(v)$.

\noindent\begin{tabular}{>{$}r<{$}>{$}l<{$}>{$}l<{$}}
  \chi_{\emptyset,1}^\#\colon & V^\svee \otimes \Ya \to \emptyset\,,
  &  \lambda \otimes u \mapsto \lambda(u)  \\
  \chi_{\Ya,1}^\# \colon &  V^\svee \otimes \Yaa \to  \Ya\,,
  & \lambda \otimes uv \mapsto \lambda(u) v + \lambda(v) u\\
  \chi_{\Ya,2}^\# \colon & V^\svee \otimes \Yab \to  \Ya\,, 
  &  \lambda \otimes u \wedge v \mapsto 
  \lambda(u) v - \lambda(v) u\\
  \chi_{\Yab,1}^\# \colon &  V^\svee \otimes \Yaab \to  \Yab\,, 
  & \lambda\otimes  u \wedge v \otimes w 
  \mapsto \lambda(w) u \wedge v \\
  && \hfill+ \frac12 \left(\lambda(u) w \wedge v +
    \lambda(v) u \wedge w\right)\\
  \chi_{\Yaa,2}^\# \colon & V^\svee \otimes \Yaab \to  \Yaa\,, 
  & \lambda \otimes u \wedge v \otimes w \mapsto 
  \lambda(u) vw - \lambda(v) uw\\
  \chi_{\Yaab,2}^\# \colon & V^\svee \otimes \Yaabb \to  \Yaab\,, 
  & \lambda \otimes  t\wedge u \otimes v \wedge w 
  \mapsto  \lambda(t) v \wedge w \otimes u - \lambda(u) v \wedge w
  \otimes t  \\
  & & \hfill +
  \lambda(v) t \wedge u \otimes w - \lambda(w) t \wedge u
  \otimes v
\end{tabular}
The characteristic ratios of these adjoint maps are equal to those of
the originals. 

Now the relations on the quiver are clear.  For instance, between
$\emptyset$ and $\Yaa$ we have $\Wedge^2 F^\svee=0$, i.e.\ 
\[
(\phi_{\Yaa,1}\otimes \lambda_k)(\phi_{\Ya,1}\otimes \lambda_l)
-
(\phi_{\Yaa,1}\otimes \lambda_l)(\phi_{\Ya,1}\otimes \lambda_k)
=0
\]
for all $k,l= 1,\dots, 4$, or more compactly
$\lambda_k\lambda_l=\lambda_l\lambda_k$.

Across the central diamond, we have relations defined by the kernel of
\begin{align*}
  &(\lambda_r\otimes \lambda_s, \lambda_t\otimes \lambda_u)\\
  &\mapsto \lambda_r\otimes \lambda_s + \frac12 \left[
    \left(\gamma_{\alpha,i,j}^+ + \gamma_{\alpha,i,j}^-\right)
    \lambda_t\otimes \lambda_u + \left(\gamma_{\alpha,i,j}^+ -
      \gamma_{\alpha,i,j}^-\right)
    \lambda_u \otimes \lambda_t\right]\\
  &= \lambda_r\otimes \lambda_s + \frac12 \left[ \left(\frac32 -
      \frac12\right) \lambda_t\otimes \lambda_u + \left(\frac32
      +\frac12\right) \lambda_u \otimes \lambda_t\right]\\
  &= \lambda_r\otimes \lambda_s + \frac12 \lambda_t\otimes
  \lambda_u + \lambda_u \otimes \lambda_t\,.
\end{align*}
This kernel is of course isomorphic to $F^\svee \otimes F^\svee$.  Similarly,
from $\Yaab$ to $\Ya$ we have relations defined by the kernel of 
\[
(g_r\otimes g_s, g_t\otimes g_u) \mapsto g_r\otimes g_s - \frac12
g_t\otimes g_u -g_u\otimes g_t\,.
\]

Since $m_{\Yaa,1,2}=1$ (see Remark~\ref{rem:classical}), the vertical
relation across the central diamond is just the commutativity
relation.

Finally, at the vertex $\Ya$ we have relations defined by the kernel
of 
\begin{align*}
  &\left(\lambda_a\otimes g_b,\, \lambda_c\otimes g_d,\, \lambda_e\otimes
  g_f\right) \mapsto\\
  &\left(\lambda_a\otimes g_b + \lambda_e\otimes g_f,\, 
\lambda_c\otimes g_d - \lambda_e\otimes g_f,\, 
\lambda_a\otimes g_b + 3 \lambda_c\otimes g_d\right)\,. 
\end{align*}

\appendix

\section{The quiverized Clifford algebra}
\label{sect:adv-quiverization}

We offer here an alternative approach to the proof of
Theorem~\ref{thm:quiverintro}, which is conceptually closer to the
spirit of~\cite{Buchweitz-Leuschke-VandenBergh:2010}, but is a bit too
cumbersome for explicit examples due to the multiple identifications
involved.

\subsection*{Quiverization}
Let $\Gamma$ be a linearly reductive algebraic group over an arbitrary
field $K$ and let $\check{\Gamma}$ be the set of characters of
$\Gamma$.  If $\alpha \in \check{\Gamma}$ then we denote its
corresponding irreducible representation by $\SSS^\alpha$.  The
character belonging to the dual representation $(\SSS^\alpha)^\svee =
\Hom_\Gamma(\SSS^\alpha,K)$ will be denoted $\alpha^*$.  Write
$\emptyset$ for the character of the trivial representation.

Let $\mod(\Gamma)$ be the category of rational representations of
$\Gamma$, and let $\mod^\circ(\Gamma)$ be the category of collections
of vector spaces $V = (V_\alpha)_{\alpha \in \check{\Gamma}}$.  We have
functors
\[
Q^\circ \colon \mod(\Gamma) \to \mod^\circ(\Gamma)\,, \qquad V \mapsto
(\Hom_\Gamma(\SSS^\alpha,V))_{\alpha \in \check{\Gamma}}
\]
\[
R^\circ \colon \mod^\circ(\Gamma) \to \mod(\Gamma)\,, \qquad V \mapsto
\bigoplus_{\beta \in \check{\Gamma}} V_\beta \otimes \SSS^\beta\,.
\]

The following lemma just expresses the fact that $\mod(\Gamma)$ is a
semisimple category.

\begin{lemma}
  \label{lem:mod-modcirc}
  The functors $Q^\circ$ and $R^\circ$ define inverse equivalences of
  categories. \qed
\end{lemma}

Unfortunately it is not immediately obvious what $Q^\circ$ does to the
monoidal structure on $\mod(\Gamma)$.  Therefore we introduce another
monoidal category $\mod^1(\Gamma)$ which consists of collections of
vector spaces $\VV = (V_\beta^\alpha)_{\alpha,\beta\in
  \check{\Gamma}}$ with tensor product defined as in matrix
multiplication:
\[
(\VV \otimes \WW)_\gamma^\alpha = \bigoplus_{\beta \in \check{\Gamma}} V^\alpha_\beta
\otimes W^\beta_\gamma\,.
\]
Furthermore
$\mod^1(\Gamma)$ acts on $\mod^\circ(\Gamma)$ by
\[
(\VV \otimes W)_\alpha = \bigoplus_{\beta \in \check{\Gamma}} V^\beta_\alpha \otimes
W_\beta\,.
\]
\begin{lemma}\label{lem:quiv-functor-ff}
  There is a fully faithful monoidal functor
  \[
  Q \colon \mod(\Gamma) \to \mod^1(\Gamma)\,, \qquad 
  V \mapsto (\Hom_\Gamma(\SSS^\beta,\, \SSS^\alpha\otimes
  V))^\alpha_\beta
  \]
  which is also compatible with the left actions of $\mod(\Gamma)$ on
  itself and of $\mod^1(\Gamma)$ on $\mod^\circ(\Gamma)$.
\end{lemma}

\begin{proof}
  That $Q$ is fully faithful follows from the fact that it has a left
  inverse
  \[
  R \colon \mod^1(\Gamma) \to \mod(\Gamma)\,, \qquad \VV\mapsto
  \bigoplus_{\beta \in \check{\Gamma}} \VV^\emptyset_\beta \otimes
  \SSS^\beta\,.
  \]
  That $Q$ is compatible with tensor product is a straightforward
  verification.
\end{proof}

From this we easily obtain the following.

\begin{lemma}
  \label{lem:algebra-object}
  If $C$ is an algebra object in $\mod(\Gamma)$ then $Q(C)$ is an
  algebra object in $\mod^1(\Gamma)$, and if $C$ is given by
  generators and relations as a quotient of a tensor algebra, say,
  $C = TV/I$ for $\Gamma$-representations $V$ and $I$, then
  \[
  Q(C) = T(Q(V))/(Q(I))\,.
  \]
  Furthermore $Q^\circ$ defines an equivalence between the category
  $\mod_\Gamma(C)$ of left $\Gamma$-equivariant $C$-modules and the
  category $\mod^\circ(Q(C))$ of left $Q(C)$-modules in
  $\mod^\circ(\Gamma)$. \qed
\end{lemma}
Here we understand $T(Q(V))$ to be the tensor algebra defined in terms
of the natural monoidal structure on $\mod^1(\Gamma)$.

If $D$ is a subset of $\check{\Gamma}$ then we denote by $\mod_D(C)$
the $\Gamma$-equivariant $C$-modules whose characters lie in $D$.
Also write
\[
Q_D(C) = Q(C)\big/\left(e_\alpha\right)_{\alpha \notin D}
\]
for the quotient of $Q(C)$ by the idempotents $e_\alpha$ corresponding
to characters $\alpha$ not in $D$.
\begin{lemma}
  \label{lem:restrict-quiver}
  Let $C$ be an algebra object in $\mod(\Gamma)$.  The equivalence
  $Q^\circ\colon \mod_\Gamma(C) \to \mod^\circ(Q(C))$ restricts to an
  equivalence between $\mod_D(C)$ and $\mod(Q_D(C))$.\qed
\end{lemma}

We define the indicator spaces $\LL$ in this more general setting
analogously to Definition~\ref{def:LL}.  

\begin{definition}
  \label{def:gen-LL}
  Let $\alpha_1, \dots, \alpha_n,\beta \in \check \Gamma$, and set
  \[
  \LL_{\beta}^{\alpha_1\cdots\alpha_n} = \Hom_\Gamma(\SSS^\beta,\
  \SSS^{\alpha_1}\otimes \cdots \otimes \SSS^{\alpha_n})\,.
  \]
\end{definition}
Obvious analogs of the properties in Proposition~\ref{prop:LL-props}
hold in this setting.  

\begin{prop}
  \label{prop:LL-V-props}
  Let $V=(V_\alpha)_\alpha$ and $W = (W_\alpha)_\alpha \in \mod^\circ(\Gamma)$.  Then
  \[
  Q(R^\circ(V))^\beta_\gamma = Q\left(\bigoplus_\alpha V_\alpha
    \otimes \SSS^\alpha\right)^\beta_\gamma \cong \bigoplus_\alpha
  V_\alpha \otimes \LL_\gamma^{\alpha\beta}
  \]
  and
  \[
  Q(V\otimes W)_\gamma^\beta = \bigoplus_{\alpha_1, \alpha_2}
  V_{\alpha_1} \otimes W_{\alpha_2} \otimes
  \LL_{\gamma}^{\alpha_1\alpha_2\beta}\,.\qed
  \]
\end{prop}

The canonical isomorphism $Q(V\otimes W) \cong Q(V) \otimes Q(W)$ is
given by
\begin{align*}
\bigoplus_{\alpha_1,\alpha_2} V_{\alpha_1}\otimes W_{\alpha_2} \otimes
\LL_\gamma^{\alpha_1\alpha_2\beta} 
&\cong Q(V\otimes W)^\beta_\gamma\\
&\cong \bigoplus_\delta Q(V)^\beta_\delta \otimes Q(W)^\delta_\gamma\\
&\cong \bigoplus_{\delta,\alpha_1,\alpha_2} 
V_{\alpha_1}\otimes\LL^{\alpha_1\beta}_\delta \otimes W_{\alpha_2}\otimes\LL^{\alpha_2\delta}_\gamma
\end{align*}
combined with the isomorphism 
\[
\LL_\gamma^{\alpha_1\alpha_2\beta} 
\cong \bigoplus_{\delta} 
\LL^{\alpha_2\delta}_\gamma \otimes \LL^{\alpha_1\beta}_\delta
\]
from Proposition~\ref{prop:LL-props}(\ref{item:LL-props-iii}).

\subsection*{The Clifford algebra}
We want to use the quiverization recipe above
applied to the general linear group, so from now on we assume that
\emph{$K$ is a field of characteristic zero}.

We fix an arbitrary $(m-l)$-dimensional vector space $U$ and set
$\tilde F = F\otimes U^\svee$, $\tilde G = G \otimes U^\svee$.  There
is a natural pairing 
\[
\langle-,-\rangle \colon \tilde F^\svee \times \tilde G \to S
\]
which is just the inclusion $F^\svee \otimes G \to S$ combined with
the canonical pairing $U \otimes U^\svee \to K$.  We extend this
pairing to a symmetric bilinear form on $\left(\tilde F^\svee \oplus
  \tilde G\right) \times\left( \tilde F^\svee \oplus \tilde G\right)$
and thence to a quadratic form $b\colon \tilde F^\svee \oplus \tilde G
\to S$.

We let $C$ be the associated Clifford algebra of $b$ over $S$.  For a
concrete description, choose ordered bases $\{\lambda_1, \dots,
\lambda_m\}$, $\{g_1, \dots, g_n\}$, and $\{u_1, \dots, u_{m-l}\}$ for
$F^\svee$, $G$, and $U$, respectively, and let $\left\{u_1^*, \dots,
  u_{m-l}^*\right\}$ denote the dual basis for $U^\svee$.  Then $C$ is
the $S$-algebra generated by $\{\lambda_i\otimes u_a\}_{i,a}$ and
$\{g_j\otimes u_b^*\}_{j,b}$ subject to the relations
\begin{multline*}
  (\lambda_i \otimes u_a)(\lambda_j\otimes u_b) + (\lambda_j \otimes
  u_b)( \lambda_i \otimes u_a) = 0 = (\lambda_i \otimes u_a)^2 
\\
\shoveright{\text{for $i,j = 1, \dots, m$;}}\\
\shoveleft{(g_i\otimes u_a^*)( g_j \otimes u_b^*)+ (g_j \otimes u_b^*)( g_i
\otimes u_a^*) = 0 = (g_i\otimes u_a^*)^2}
\\
\shoveright{\text{for $i,j = 1, \dots, n$; and}}\\
\shoveleft{(\lambda_i \otimes u_a)(g_j\otimes u_b^*) + (g_j\otimes u_b^*)(
\lambda_i \otimes u_a) = \delta_{ab} x_{ij}}
\\
\text{for $i=1, \dots, m$, $j=1, \dots, n$}
\end{multline*}
for all $a,b = 1, \dots, m-l$.

Recall that $B_{l,m-l}$ denotes the set of partitions having at most
$l$ rows and at most $m-l$ columns, which we now think of as
representing characters for $\GL(U) \cong \GL(m-l)$ via the
identification $\alpha \leftrightarrow L^{\alpha'}U$ (note the
transpose!), where $L^{\alpha'}$ is the Schur functor for the weight
$\alpha'$.

\begin{definition}
  \label{def:QC}
  The \emph{quiverized Clifford algebra} is 
  \[
  Q_{B_{l,m-l}}(C) =
  Q(C)/(e_\alpha)_{\alpha \notin B_{l,m-l}}\,,
  \]
  where $e_\alpha$ denotes the idempotent corresponding to $\alpha$.
\end{definition}

To show that the quiverized Clifford algebra is isomorphic to the
non-commutative desingularization, we define a left action on the
tilting bundle $\caln = \bigoplus_{\alpha\in B_{l,m-l}}{p'}^*L^\alpha
\calq$.

\begin{prop}
  \label{prop:map-app}
  There is a ring homomorphism $\Theta\colon Q_{B_{l,m-l}}(C) \to
  A=\End_\caloz(\caln)$.
\end{prop}

\begin{proof}
  Pulling back the tautological quotient map $\pi^* F^\svee \to \calq$
  from $\GG$ to $\calz$ and tensoring with $U$ we obtain a map
  \[
  \Phi^U\colon {q'}^* (F\otimes S)^\svee \otimes U \to {p'}^*\calq\otimes
  U\,.
  \]
  Similarly the fact that $\calz =
  \underline{\Spec}(\Sym_{\calog}(\calq \otimes G))$ yields a
  tautological map ${p'}^* \calq \otimes {q'}^*(G \otimes S) \to \caloz$ which
  we transform into a map
  \[
  \Psi^U \colon {q'}^* (G\otimes S) \otimes U^\svee \to {p'}^*\calq^\svee
  \otimes U^\svee\,.
  \]
  Now $\tilde F^\svee$ maps to the global sections of ${q'}^* (F\otimes S)^\svee
  \otimes U$ and similarly $\tilde G$ maps to the global sections of
  ${q'}^* (G\otimes S) \otimes U^\svee$. Thus $\tilde F^\svee$ acts via the
  map $\Phi^U$ on $\Wedge_{\caloz}({p'}^*\calq \otimes U)$ by left
  exterior multiplication, and $\tilde G$ acts via the map $\Psi^U$ by
  contraction.  It is easy to see that these two actions satisfy the
  Clifford relations.

  Thus $C$ acts on $\Wedge_{\caloz}({p'}^*\calq \otimes U)$ and hence
  $Q(C)$ acts on $Q^\circ(\Wedge_{\caloz}({p'}^*\calq \otimes U))$.
  By the Cauchy formula we have
  \[
  \Wedge_\caloz({p'}^*\calq \otimes U) = \bigoplus_{\alpha\in
    B_{l,m-l}} L^\alpha \calq \otimes L^{\alpha'}U =
  \bigoplus_{\alpha\in B_{l,m-l}} \caln_\alpha \otimes
  L^{\alpha'}U\,,
  \]
  and hence
  \[
  Q^\circ (\Wedge_\caloz({p'}^*\calq \otimes U)) = \bigoplus_{\alpha
    \in B_{l,m-l}} \caln_\alpha = \caln\,.
  \]
  Thus $Q(C)$ acts on $\caln$ and in fact $Q_{B_{l,m-l}}(C)$ acts
  since $\Wedge_\caloz({p'}^*\calq \otimes U)$ contains only
  representations $L^\alpha \calq$ with weight in $B_{l,m-l}$.
\end{proof}

To prove that $\Theta$ is an isomorphism, we must understand
$Q_{B_{l,m-l}}(C)$ more concretely.  The presentation of $C$ over $S$
yields a presentation of $Q(C)$ by Lemma~\ref{lem:algebra-object}, and
hence of $Q_{B_{l,m-l}}(C)$.  The generators are easily identified.
\begin{prop}
  \label{prop:gens-QC}
  The quiver for $Q(C)$ has vertices indexed by the transposes
  $\alpha'$ of partitions corresponding to representations $L^\alpha
  U$, and has arrows $\alpha' \to \beta'$ indexed by (a basis of) 
  \[
  \begin{cases}
    F^\svee & \text{if $\alpha\nearrow \beta$, and}\\
    G & \text{if $\beta \nearrow  \alpha$.}
  \end{cases}
  \]
\end{prop}

\begin{proof}
  The Clifford algebra $C$ is generated by $\tilde F^\svee = F^\svee
  \otimes U$ and $\tilde G = G\otimes U^\svee$. We therefore compute
  the generators of $Q(C)$ as
  \[
  Q(F^\svee \otimes U)_{\beta'}^{\alpha'} =\Hom_{\GL(U)}(L^{\beta} U,\
  L^{\alpha} U \otimes F^\svee \otimes U) = F^\svee \otimes
  \LL_{\beta}^{1\alpha}
  \]
  and
  \[
  Q(G \otimes U^\svee)_{\beta'}^{\alpha'} =\Hom_{\GL(U)}(L^{\beta} U,\
  L^{\alpha} U \otimes G \otimes U^\svee) = G \otimes
  \LL_{\beta}^{1^*\alpha}
  \]
  for two partitions $\alpha', \beta'$, where the transposes arise
  because of our identification $\alpha \leftrightarrow L^{\alpha'}U$.
  These are the natural generators.  To have them solely in terms of
  $F^\svee$ and $G$, one can choose basis elements for the
  one-dimensional spaces $\LL^{1\alpha}_\beta$ and
  $\LL^{1^*\alpha}_\beta$.
\end{proof}

The presentation of $C$ over $S$ can be translated into a presentation
over the ground field $K$.  In the case of maximal minors we
saw~\cite[Remark 7.6]{Buchweitz-Leuschke-VandenBergh:2010} that this
presentation involves cubic relations of the form $\lambda_k(\lambda_i
g_j + g_j\lambda_i) = (\lambda_i
g_j + g_j\lambda_i)\lambda_k$ and $g_k(\lambda_i
g_j + g_j\lambda_i)=(\lambda_i
g_j + g_j\lambda_i)g_k$ expressing the fact that the polynomial ring
$S$ lies in the center of the algebra.  We observe that this
phenomenon disappears for smaller minors.

\begin{prop}
  \label{prop:Y-quadratic}
  If $m-l>1$, then the Clifford algebra $C$ is defined by quadratic
  relations over $K$, whence $Y$ is quadratic as well.
\end{prop}

\begin{proof}
  We have to show that the generators $x_{ij} = \lambda_i \otimes g_j$
  of the polynomial ring are central in $C$, using only the quadratic
  relations. To show that this element commutes with the generators
  $\lambda_k\otimes u_a$ and $g_k \otimes u_a^*$, fix $k$ and $a$ and
  observe that $\lambda_k\otimes u_a$ and $g_k \otimes u_a^*$ each
  anticommute with any $\lambda_i \otimes u_b$ and $g_j\otimes u_b^*$
  for any $b \neq a$.  Since $m-l>1$ we may choose $b \neq a$, and then
  \[
  \lambda_i \otimes g_j = (\lambda_i \otimes u_b)(g_j\otimes u_b^*) + 
  (g_j\otimes u_b^*)(\lambda_i \otimes u_b)
  \]
  commutes with $\lambda_k\otimes u_a$ and $g_k \otimes u_a^*$.  The
  consequence that $Y$ is quadratic follows from
  Lemma~\ref{lem:algebra-object}.
\end{proof}

We can obtain the relations in $Q(C)$ by quiverization as well, giving
an alternative to Lemma~\ref{lem:surjrels}.

\begin{proposition}
  \label{prop:QC-rels}  
  Assume $m-l>1$. The spaces of relations in $Q(C)$ between two
  vertices $\alpha'$ and $\gamma'$ are given below.
  \begin{equation*}
  \begin{cases}
    \Sym_2 F^\svee & \text{if }\gamma \nearrow \nearrow \alpha, \text{ two boxes in a column}\\
    \Wedge^2 F^\svee & \text{if }\gamma \nearrow \nearrow \alpha, \text{ two boxes in a row} \\
    \Sym_2 F^\svee \oplus \Wedge^2 F^\svee \cong F^\svee \otimes
      F^\svee & \text{if }\gamma \nearrow \nearrow \alpha, \text{ two  disconnected boxes} \\
    F^\svee\otimes G & \text{if }\alpha \neq \gamma \text{, and } \alpha \nearrow
      \beta,\ \gamma \nearrow \beta, \text{ some } \beta\\
    (F^\svee\otimes G)^{\oplus(r(\alpha)-1)} & \text{if }\alpha =\gamma\\
    \Sym_2 G & \text{if }\alpha \nearrow \nearrow \gamma, \text{ two
      boxes in a column} \\
    \Wedge^2 G & \text{if }\alpha \nearrow \nearrow \gamma, \text{ two
      boxes in a row}\\
    \Sym_2 G \oplus \Wedge^2 G \cong G \otimes G & \text{if }\alpha \nearrow \nearrow \gamma, \text{
      two disconnected boxes.}
  \end{cases}
  \end{equation*}
  Here $r(\alpha)$ denotes the number of rows in which a box can be
  added to $\alpha$ to obtain a partition.
\end{proposition}

Note that as in Definition~\ref{def:Yrels}, the embedding in each case
is not the obvious diagonal one, but relies on the canonical
decompositions~\ref{eq:decomps}. 

We prove the proposition by considering in turn the quiverizations of the
three kinds of relations on $C$. These are defined by subspaces of the
degree-two part of the tensor algebra $T_S((\tilde F^\svee \oplus
\tilde G)\otimes S)$, which decomposes
\[
(\tilde F^\svee \oplus \tilde G) \otimes (\tilde F^\svee \oplus \tilde
G) 
=
(\tilde F^\svee \otimes \tilde F^\svee) \oplus
(\tilde G \otimes  \tilde G) \oplus
(\tilde F^\svee \otimes \tilde G) \oplus 
(\tilde F^\svee \otimes \tilde G)\,.
\]

\begin{nsit}
  {Relations coming from $\tilde F^\svee$.}\label{sit:F-rels}
  In $C$ the elements of $\tilde F^\svee$ anticommute; equivalently,
  the relations defining $C$ include the representation
  $\Sym_2(F^\svee \otimes U)$.  Now
  \[
  \Sym_2(F^\svee \otimes U) = \left(\Sym_2 F^\svee \otimes \Sym_2
    U\right) \oplus \left(
  \Wedge^2 F^\svee \otimes \Wedge^2 U\right)
  \]
  naturally (for definiteness we take the splitting $\Sym_2 F^\svee
  \to F^\svee \otimes F^\svee$ sending $\lambda \mu$ to $\frac12
  (\lambda\otimes \mu + \mu \otimes \lambda)$).  So in fact we have two
  types of relations $\Sym_2 F^\svee \otimes \Sym_2 U$ and $\Wedge^2
  F^\svee \otimes \Wedge^2 U$.  We discuss these individually.

  For the first case we need to describe the map $Q(\Sym_2 F^\svee
  \otimes \Sym_2 U) \to Q(\Sym_2 F^\svee) \otimes Q(\Sym_2 U)$.
  Specializing to two vertices $\alpha',\ \gamma'$, we need to describe the
  induced map
  \begin{align*}
    \Sym_2 F^\svee \otimes \LL_\gamma^{[2]\alpha} 
    &= Q(\Sym_2 F^\svee \otimes \Sym_2 U)^{\alpha'}_{\gamma'}  \\
    & \to \bigoplus_{\beta'} Q(F^\svee \otimes U)^{\beta'}_{\gamma'}
    \otimes Q(F^\svee \otimes U)^{\gamma'}_{\alpha'}\\
    &= \bigoplus_{\beta'} F^\svee \otimes \LL_{\gamma}^{1\beta} \otimes
    F^\svee \otimes \LL_\beta^{1\alpha} \\
    &= F^\svee \otimes F^\svee
    \otimes \LL^{11\alpha}_\gamma\,.
  \end{align*}
  The map on the $F^\svee$ factors is the natural one $\Sym_2 F^\svee
  \to F ^\svee \otimes F^\svee$, as we have not really touched
  $F^\svee$.  The inclusion map $\LL_\gamma^{[2]\alpha} \to
  \LL_\gamma^{11\alpha}$ is obtained from the canonical decomposition
  \[
  \LL_\gamma^{11\alpha} = \left(\LL_{[2]}^{11} \otimes \LL_\gamma
    ^{[2]\alpha} \right) \oplus \left(\LL_{[11]}^{11} \otimes
    \LL_\gamma^{[11]\alpha}\right)\,.
  \]

  There are three essentially different possibilities for
  $\alpha'$, $\gamma'$.
  \begin{enumerate}[\quad(i)]
  \item \emph{$\gamma'$ is obtained from $\alpha'$ by adding $2$ boxes
      to a row.} In this case there is a unique $\beta'$ such that
    $\alpha'\nearrow\beta'\nearrow\gamma'$.  By the
    Littlewood-Richardson rule we have $\LL^{[11]\alpha}_{\gamma}=0$
    and hence
    \[
    \LL^{11\alpha}_\gamma=\LL^{[2]\alpha}_\gamma= \LL^{1\beta}_\gamma\otimes \LL^{1\alpha}_\beta\,.
    \]
    The corresponding relations are given by 
    \[
    \Sym_2 F^{\svee}\otimes \LL^{1\beta}_\gamma\otimes \LL^{1\alpha}_\beta\hookrightarrow
    \left(F^\svee\otimes \LL^{1\beta}_\gamma\right) \otimes \left(F^\svee \otimes \LL^{1\alpha}_\beta\right)\,.
    \]
    Thus for $\alpha'\nearrow\beta'\nearrow\gamma'$ with the boxes being
    added in the same row the relations are the \emph{anti-commutation
      relations}.
  \item \emph{$\gamma'$ is obtained from $\alpha'$ by adding $2$ boxes to a
      column.}  In this case $\LL^{[2]\alpha}_\gamma=0$ and hence
    there are no such relations.
  \item \emph{$\gamma'$ is obtained from $\alpha'$ by adding $2$ boxes
      not in the same row or column.}
    In this case there are distinct $\beta'_1$, $\beta'_2$ such that
    $\alpha'\nearrow\beta'_1\nearrow\gamma'$,
    $\alpha'\nearrow\beta'_2\nearrow\gamma'$. The corresponding relations
    are now relations between paths going
    $\alpha'\to\beta_1'\to \gamma'$ and
    $\alpha'\to\beta_2'\to \gamma'$:
    \begin{equation*}
      \label{eq:sss1}
      \Sym_2 F^\svee\otimes \LL^{[2]\alpha}_{\gamma}
      \hookrightarrow 
      (F^\svee\otimes \LL^{1\beta_1}_\gamma) \otimes (F^\svee \otimes \LL^{1\alpha}_{\beta_1})\oplus
      (F^\svee\otimes \LL^{1\beta_2}_\gamma) \otimes (F^\svee \otimes \LL^{1\alpha}_{\beta_2})\,.
    \end{equation*}
  \end{enumerate}

  Now we describe the relations on $Q(C)$ derived from the inclusion
  \[
  \Wedge^2 F^\svee\otimes \Wedge^2 U \to (F^\svee\otimes U)\otimes
  (F^\svee\otimes U)\,. 
  \]
  Applying $Q(-)^{\alpha'}_{\gamma'}$ to both sides yields
  \begin{align*}
    \Wedge^2 F^\svee\otimes \LL^{[11]\alpha}_{\gamma}
    &= Q(\Wedge^2 F^\svee\otimes \Wedge^2 U)^{\alpha'}_{\gamma'} \\
    &\to \bigoplus_{\beta'} Q(F^\svee\otimes U)^{\beta'}_{\gamma'}\otimes
    Q(F^\svee\otimes U)_{\beta'}^{\alpha'}\\
    &= \bigoplus_{\beta'} F^\svee\otimes \LL^{1\beta}_\gamma \otimes
    F^\svee \otimes \LL^{1\alpha}_\beta \\
    &=F^\svee\otimes
    F^\svee\otimes\LL^{11\alpha}_\gamma\,.
  \end{align*}
  We discuss again the possible cases.
  \begin{enumerate}[\quad(i)]
  \item \emph{$\gamma'$ is obtained from $\alpha'$ by adding $2$ boxes to a
      row.}  In this case $\LL^{[11]\alpha}_{\gamma}=0$ and hence
    there are no such relations.
  \item \emph{$\gamma'$ is obtained from $\alpha'$ by adding $2$ boxes to a column.}
    In this case there is again a unique $\beta'$ such that
    $\alpha'\nearrow\beta'\nearrow\gamma'$. The corresponding relations are
    \[
    \Wedge^2 F^{\svee}\otimes \LL^{1\beta}_\gamma\otimes
    \LL^{1\alpha}_\beta
    \hookrightarrow
    (F^\svee\otimes \LL^{1\beta}_\gamma) \otimes (F^\svee \otimes \LL^{1\alpha}_\beta)\,.
    \]
    Thus for $\alpha'\nearrow\beta'\nearrow\gamma'$ with the boxes being added in
    the same column the relations are the \emph{commutation relations}.
  \item\emph{$\gamma'$ is obtained from $\alpha'$ by adding $2$ boxes not
      in the same row or column.}  In this case there are distinct
    $\beta'_1$, $\beta'_2$ such that $\alpha'\nearrow\beta'_1\nearrow\gamma'$,
    $\alpha'\nearrow\beta'_2\nearrow\gamma'$. The corresponding relations are now
    relations between paths going $\alpha'\to\beta'_1\to \gamma'$ and
    $\alpha'\to\beta'_2\to \gamma'$:
    \begin{equation*}
      \label{sss3}
      \Wedge^2 F^\svee\otimes \LL^{[11]\alpha}_{\gamma}
      \to (F^\svee\otimes \LL^{1\beta_1}_\gamma) \otimes (F^\svee \otimes \LL^{1\alpha}_{\beta_1})\oplus
      (F^\svee\otimes \LL^{1\beta_2}_\gamma) \otimes (F^\svee \otimes \LL^{1\alpha}_{\beta_2})\,.
    \end{equation*}
  \end{enumerate}
\end{nsit}

\begin{nsit}
  {Relations coming from $\tilde G$.}\label{sit:G-rels}
  Next we discuss the relations on $Q(C)$ coming from the inclusion
  \[
  \Sym_2 (G\otimes U^\svee) \subseteq (G \otimes U^\svee) \otimes
  (G\otimes U^\svee)\,.
  \]
  A discussion exactly parallel to the one above, using the identity
  $Q(G\otimes U^\svee)^{\beta'}_{\gamma'} = G \otimes
  \LL_{\gamma}^{1^*\beta}$, leads to the following cases.
  \begin{enumerate}[\quad(i)]
  \item \emph{$\gamma'$ is obtained from $\alpha'$ by deleting $2$ boxes
      from a row.} Here there is a unique $\beta'$ such that $\gamma'
    \nearrow \beta' \nearrow \alpha'$. We find
    $\LL^{[11]^*\alpha}_{\gamma}=0$ and hence
    $\LL^{[2]^*\alpha}_\gamma  = \LL_\gamma^{1^*\beta} \otimes
    \LL_{\beta}^{1^*\alpha}$. This leads to the inclusion 
    \[
    \Sym_2 G \otimes \LL_\gamma^{1^*\beta} \otimes
    \LL_{\beta}^{1^*\alpha} \hookrightarrow \left(G \otimes
      \LL_\gamma^{1^*\beta}\right) \otimes \left( G \otimes
      \LL_{\beta}^{1^*\alpha}\right)\,,
    \]
    so we obtain the \emph{anti-commutation relations.}

  \item \emph{$\gamma'$ is obtained from $\alpha'$ by deleting $2$
      boxes from a column.}  In this case there is again a unique
    $\beta'$ such that $\alpha'\nearrow\beta'\nearrow\gamma'$. We find
    the corresponding relations
    \[
    \Wedge^2 G\otimes \LL^{1^*\beta}_\gamma\otimes
    \LL^{1^*\alpha}_\beta \hookrightarrow (G\otimes
    \LL^{1^*\beta}_\gamma) \otimes (G \otimes \LL^{1^*\alpha}_\beta)\,,
    \]
    that is, the \emph{commutation relations}.

  \item\emph{$\gamma'$ is obtained from $\alpha'$ by deleting $2$ boxes
      not in the same row or column.} There are now two  distinct
    $\beta'_1$, $\beta'_2$ such that $\alpha'\nearrow\beta'_1\nearrow\gamma'$,
    $\alpha'\nearrow\beta'_2\nearrow\gamma'$. The corresponding relations are now
    relations between paths going $\alpha'\to\beta'_1\to \gamma'$ and
    $\alpha'\to\beta'_2\to \gamma'$:
    \[
      \Sym_2 G\otimes \LL^{[2]^*\alpha}_{\gamma}
      \hookrightarrow 
      (G\otimes \LL^{1^*\beta_1}_\gamma) \otimes (G \otimes \LL^{1^*\alpha}_{\beta_1})\oplus
      (G\otimes \LL^{1^*\beta_2}_\gamma) \otimes (G \otimes \LL^{1^*\alpha}_{\beta_2})
    \]
    and
    \[
      \Wedge^2 G\otimes \LL^{[11]^*\alpha}_{\gamma}
      \to (G\otimes \LL^{1^*\beta_1}_\gamma) \otimes (G \otimes \LL^{1^*\alpha}_{\beta_1})\oplus
      (G\otimes \LL^{1^*\beta_2}_\gamma) \otimes (G \otimes \LL^{1^*\alpha}_{\beta_2})\,.
    \]
\end{enumerate}
\end{nsit}

\begin{nsit}
  {Mixed relations.}\label{sit:mix-rels}
  Finally we discuss the anti-commutativity relations between $F^\svee\otimes U$
  and $G\otimes U^\svee$.  They are defined by the image of the map
  defined by the identity, the swap, and the trace:
  \begin{equation}
    \label{threecomponents}
    (F^\svee \otimes U) \otimes (G \otimes U^\svee)
    \xrightarrow{\left[\begin{smallmatrix}\id \\ \tau  \\ -\Tr
        \end{smallmatrix}\right]} 
    \mbox{\ensuremath{
        \begin{array}{c}
          (F^\svee\otimes U)\otimes (G\otimes U^\svee) \\ 
          {}\oplus{} \\ 
          (G\otimes U^\svee)\otimes (F^\svee\otimes U)\,.\\
          {}\oplus{} \\ 
          (F^\svee\otimes G)
        \end{array}
      }}
  \end{equation}
  The summands on the right-hand side are living in the obvious places
  in the tensor algebra $T_{S} ((\tilde F^\svee\oplus
  \tilde G )\otimes S)$; in particular the third summand sits
  inside the degree zero part of the tensor algebra, which is $S$.  

  We apply  $Q(-)^{\alpha'}_{\gamma'}$ to the  components of
  \eqref{threecomponents}, using the canonical isomorphisms
  \begin{align}
    \label{eq:mix1-app}\LL^{11^\ast \alpha}_\gamma&\cong \bigoplus_{\beta'}  \LL^{1\beta}_\gamma
    \otimes \LL^{1^\ast\alpha}_{\beta}\\
    \label{eq:mix2-app}\LL^{11^\ast \alpha}_\gamma&\cong \bigoplus_{\beta'}  \LL^{1^\ast\beta}_\gamma
    \otimes \LL^{1\alpha}_{\beta}\,.
  \end{align}
  We see first that if $\alpha \neq \gamma$ then the third component
  of the target vanishes:
  \begin{equation*}
    Q(F^\svee\otimes G)^{\alpha'}_{\gamma'}\\
    =F^\svee\otimes G \otimes \LL^{\alpha}_\gamma =0
  \end{equation*}
  since $\LL^{\alpha}_\gamma=\delta_{\alpha,\gamma} K$.  Therefore
  when $\alpha \neq \gamma$ the direct sums appearing in the
  quiverizations of the first two components
  \begin{align*}
    F^\svee\otimes G\otimes \LL^{11^\ast \alpha}_\gamma 
    &= Q(F^\svee\otimes U\otimes G\otimes U^\svee)^{\alpha'}_{\gamma'} \\
    &\to  \bigoplus_{\beta'}Q(F^\svee\otimes U)^{\beta'}_{\gamma'}\otimes Q(G\otimes
    U^\svee)^{\alpha'}_{\beta'} \\
    &= \bigoplus_{\beta'} (F^\svee \otimes
    \LL^{1\beta}_\gamma ) \otimes (G \otimes
    \LL^{1^\ast\alpha}_{\beta} )
  \end{align*}
and 
  \begin{align*}
    F^\svee\otimes G\otimes \LL^{11^\ast \alpha}_\gamma
    &= Q(F^\svee\otimes U\otimes G\otimes U^\svee)^{\alpha'}_{\gamma'}\\
    &\to \bigoplus_{\beta'}Q(G\otimes U^\svee)^{\beta'}_{\gamma'}\otimes Q(F^\svee\otimes
    U)^{\alpha'}_{\beta'} \\
    &= \bigoplus_{\beta'} (G \otimes
    \LL^{1^\ast\beta}_\gamma ) \otimes (F^\svee \otimes \LL^{1\alpha}_{\beta}
    )
  \end{align*}
  have exactly one summand each, and are thus of the form 
  \[
  F^\svee \otimes G \otimes \LL^{11^*\alpha}_\gamma \to (F^\svee
  \otimes \LL^{1\beta_1}_\gamma) \otimes (G \otimes
  \LL^{1^*\alpha}_{\beta_1})
  \]
  and
  \[
  F^\svee \otimes G \otimes \LL^{11^*\alpha}_\gamma \to  (G \otimes
  \LL^{1\alpha}_{\beta_2})\otimes (F^\svee
  \otimes \LL^{1^*\beta_2}_\gamma)
  \]  
  for some partitions $\beta_1,\ \beta_2$ with $\beta_1'\nearrow \alpha'
  \nearrow \beta'_2$ and $\beta'_1 \nearrow \gamma' \nearrow \beta'_2$.
  The image in this case is thus $F^\svee \otimes G$.

  If $\alpha = \gamma$, then we discard the degree-zero relations
  expressing the orthogonality of the idempotents corresponding to the
  vertices and need only consider the image of $F^\svee \otimes G
  \otimes \Tr_0 U$ in $(F^\svee \otimes U) \otimes (G \otimes U^\svee)
  \oplus (G \otimes U^\svee) \otimes (F^\svee \otimes U)$, where
  $\Tr_0U = \ker(\Tr\colon U^\svee \otimes U \to K)$.  The direct sums
  appearing in \eqref{eq:mix1-app} and \eqref{eq:mix2-app} have one non-zero
  summand for each partition $\beta'$ such that $\alpha' \nearrow
  \beta'$ and $\beta'$ has at most $m-l$ rows (so that $L^{\beta'}U
  \neq 0$).  That is, they have $t(\alpha)$ direct summands.  Since
  $\LL^{11^*\alpha}_\alpha = K \oplus \Tr_0U$, the image of $F^\svee
  \otimes G \otimes \Tr_0U$ is $(F^\svee \otimes G)^{\oplus(t(\alpha)-1)}$.
\end{nsit}

Arguments parallel to those in Lemma~\ref{lem:surjrels} and
Theorem~\ref{thm:quiveriso} now prove the following.

\begin{theorem}
  \label{thm:quiverisoQC}
  The homomorphism $Q_{B_{l,m-l}}(C) \to A = \End_{\caloz}(\caln)$ is
  an isomorphism. \qed
\end{theorem}

\begin{remark}
  \label{rem:char-free-QC}
  The description of the non-commutative desingularization as a
  quiverized Clifford algebra depends essentially on characteristic
  zero, relying as it does on the canonical direct-sum decompositions
  of representations of $\GL(U)$ into irreducibles. In retrospect, it
  was the fact that the torus $\GL(1)$ is linearly reductive in all
  characteristics that allowed us to prove the analogous result for
  the case of maximal minors in a characteristic-free manner. 
\end{remark}

\begin{remark}
  \label{rem:modulispace}
  Using the description above of the non-commutative desingularization
  as a quiverized Clifford algebra, one can prove an analogue
  of~\cite[Theorem D]{Buchweitz-Leuschke-VandenBergh:2010}, to the
  effect that $\calz$ is the fine moduli space for certain
  representations of the truncated Young quiver. The details are
  essentially identical to those in~\cite[section
  8]{Buchweitz-Leuschke-VandenBergh:2010}, so we don't pursue this
  direction further.
\end{remark}

\newcommand{\arxiv}[2][AC]{\mbox{\href{http://arxiv.org/abs/#2}{\textsf{arXiv:#2}}}}
\newcommand{\oldarxiv}[2][AC]{\mbox{\href{http://arxiv.org/abs/math/#2}{\textsf{arXiv:math/#2[math.#1]}}}}
\renewcommand{\MR}[1]{%
  {\href{http://www.ams.org/mathscinet-getitem?mr=#1}{MR #1}.}}
\providecommand{\bysame}{\leavevmode\hbox to3em{\hrulefill}\thinspace}
\newcommand{\arXiv}[1]{%
  \relax\ifhmode\unskip\space\fi\href{http://arxiv.org/abs/#1}{arXiv:#1}}


\def\cprime{$'$} \def\polhk#1{\setbox0=\hbox{#1}{\ooalign{\hidewidth
  \lower1.5ex\hbox{`}\hidewidth\crcr\unhbox0}}}
  \def\polhk#1{\setbox0=\hbox{#1}{\ooalign{\hidewidth
  \lower1.5ex\hbox{`}\hidewidth\crcr\unhbox0}}}
\providecommand{\bysame}{\leavevmode\hbox to3em{\hrulefill}\thinspace}
\providecommand{\MR}{\relax\ifhmode\unskip\space\fi MR }
\providecommand{\MRhref}[2]{%
  \href{http://www.ams.org/mathscinet-getitem?mr=#1}{#2}
}
\providecommand{\href}[2]{#2}

\end{document}